\newtheorem{theorem}{Theorem}[section]
\newtheorem{lem}[theorem]{Lemma}
\newtheorem{defn}[theorem]{Definition}
\newtheorem{thm}[theorem]{Theorem}
\newtheorem{prop}[theorem]{Proposition}
\newtheorem{cor}[theorem]{Corollary}
\theoremstyle{definition}
\theoremstyle{remark}
\numberwithin{equation}{section}
\def\rk{{\rm rk}}
\def\Tor{{\rm Tor}}
\def\Tens{{\rm Tens}}
\def\setminus{\smallsetminus}
\def\supp{{\rm{supp\,}}}
\def\deg{\mbox{\rm deg\,}}
\begin{document}

\title{On the foundations of signed graphs I: chain groups, frame matroid, and bivariate flow polynomial\thanks{Research is supported by RGC Competitive Earmarked Research Grant 16308821.}}

%    Information for first author
\author{Beifang Chen\\
{\normalsize Department of Mathematics}\\
{\normalsize Hong Kong University of Science and Technology}\\
{\normalsize Clear Water Bay, Hong Kong}\\
{\normalsize\tt mabfchen@ust.hk}
}

%Address of record for the research reported here
%\address{Department of Mathematics,
%Hong Kong University of Science and Technology,
%Clear Water Bay, Kowloon, Hong Kong}
%Current address
%\curraddr{}

%\email{mabfchen@ust.hk}
%    \thanks will become a 1st page footnote.
%\thanks{Research is supported by RGC Competitive Earmarked Research Grant 600703}

%    Information for second author
%\author{}
%\address{}
%\email{}
%\thanks{}

%    General info
%\subjclass{05A99, 05B35, 05C15, 06A07, 52B20, 52B40}
%\date{\normalsize 4. Feb. 2025}

%\dedicatory{}

%\keywords{orientations, coloring polytopes, tension-flow polytopes, chromatic
%polynomial, tension polynomial, Tutte polynomial, characteristic polynomial,
%group arrangements, acyclic orientations, directed cuts, cut-equivalence
%relation, Eulerian graphs, Eulerian equivalence relation}
\maketitle

\begin{abstract}
This paper studies signed graphs with possible outer-edges. We introduce and investigate the chain group, the boundary operator, the co-boundary operator, the flow group, the tension group, the homology group, the cohomology group, with coefficients in an abelian group. We also introduce and investigate the bivariate flow polynomial for signed graphs. The guiding principle is the correspondence between representable matroids over $\Bbb R$ on a ground set $E$ of edges and the subspaces of the vector space of real-valued chains on the same ground set. The frame matroid of signed graph emerges naturally by defining circuits as minimal supports of nonzero flows, rather than listing circuit patterns abruptly. Likewise, bonds, or co-circuits, can be obtained as minimal supports of nonzero tensions. In addition to standardizing the concepts and their meanings of signed graphs, we update the following results: (1) Characterization of cuts and bonds of signed graph with outer-edges. (2) The structures of flow group, boundary group, and homology group, with coefficients in an arbitrary abelian group. (3) Introduction of  bivariate flow polynomial for signed graph, revealing the mystery of inexistence of univariate flow polynomial of signed
graph in the literature.
\end{abstract}

\noindent{\bf Keywords}: signed-graph cut, signed-graph bond, frame matroid, chain groups, flow groups, bilinear pairing, bivariate flow polynomial \\

\noindent{\bf MSC 2020}: 05C21, 05C22, 05C25, 05C31

\section{Introduction}

Signed graphs were introduced by Harary~\cite{Harary} to establish an abstract balance theory for social relations. The systematic study of signed graphs was begun by Zaslavsky~\cite{Zaslavsky-signed-graphs-DAM,
Zaslavsky-signed-graph-coloring-DM, Zaslavsky-orientation-EJC},
who developed fundamental concepts such as circuits, bonds, frame
matroids, colorings, flows, Laplacian matrices and a weighted counting formula on the number of bases. It was emphasized in
\cite[p.\,53]{Zaslavsky-signed-graphs-DAM} that the central observation
was the existence of the signed-graphic matroid, known as {\em frame matroid}, whose circuits were introduced by listing their patterns abruptly without explanation and motivation. We wonder whether there exists a guiding principle to naturally obtain the frame matroid for a signed graph. Rather than checking the circuit axioms for listed circuit patterns in the literature case by case, could it be possible to directly define the frame matroid for a signed graph and deduce the circuit patterns naturally?

The answer is affirmative. Actually, Bouchet \cite{Bouchet} might have noticed a unified approach from the chain group of Tutte \cite{Tutte-Matroid-Theory} to obtain circuits of a signed graph, but his interest was on the flow numbers of signed graphs, leading to his celebrated $6$-flow conjecture. However, Tutte's chain group is not intrinsic when it is applied to graphs and simplicial complexes, for it depends on the chosen orientation. The observations of Bouchet \cite{Bouchet} (for signed graph without outer-edges) and Chen \cite{Chen-GC} (for signed graph with outer-edges) provide the answer to the question raised above for the circuit part, and can serve as a guiding principle to the reminiscent question for the bond part.

Based the work of Zaslavsky, Chen and Wang~\cite{Chen-Wang-EJC,Chen-Wang-DAM1} set up the algebraic structures such as the flow lattice (space), the tension lattice (space), and the torsion of the incidence matrix for signed graphs without outer-edges (referred to as half-edges in the literature), parallel to that for ordinary graphs. Later, Chen and Wang \cite{Chen-Wang-arXiv} and Chen, Wang and Zaslavsky \cite{Chen-Wang-Zaslavsky-DM} discovered and characterized the structures of the so-called conformal indecomposable integer-valued flows for signed graphs without outer-edges. More recently, Chen \cite{Chen-GC} extended such structures and characterizations of conformal indecomposable integer flows to signed graphs with outer-edges. The by-product of the characterizations is that the supports of elementary flows of a signed graph are exactly the circuits of the given signed graph, defined frequently by listing their circuit patterns case by case in the literature.

Now, it is clear and straightforward to see, once recognized, that the circuits of a signed graph are just the minimal supports of nonzero flows of the given signed graph; and consequently, the bonds should be and actually are the minimal supports of nonzero tensions (dual to flows) of the signed graph. However, this fact seems unknown in the graph theory community, though it should be well-known. Nevertheless, a unified approach to obtain circuits and bonds for signed graphs is essential for foundational purposes and aesthetic perfection.

In a recent paper by Goodall {\em et all} \cite{Goodall et al}, profound formulas on the number of proper colorings, nowhere-zero flows and tensions over finite abelian groups are studied (for introducing Tutte polynomial for signed graphs without outer-edges), relying on the algebraic setup made by Chen and Wang \cite{Chen-Wang-EJC}. However, we are not satisfied with the setup and notations adopted in the early treatment \cite{Chen-Wang-EJC}. The aim of the present paper is to provide a standard algebraic-topology style foundation for signed graphs, serving as a succinct and efficient setup for future use, avoiding the repetition procedure that has happened for decades in almost all papers about signed graphs on flows. Such an intrinsic approach (with no chosen specific orientation) is still unavailable in the literature to the best of our knowledge.

The fundamental ingredient of a signed graph $\Sigma$ is the oppositely bi-directed orientations on its edges, which dates back to Edmonds and Johnson \cite{Edmonds-Johnson}. The bi-directed edges generate a $1$-chain group ${\rm C}_1(\Sigma,{\Bbb A})$, with coefficients in an arbitrary abelian group $\Bbb A$, identifying each oriented edge to the negative of its oppositely oriented edge. The set of vertices generates a $0$-chain group ${\rm C}_0(\Sigma,{\Bbb A})$ with coefficients in $\Bbb A$. The significant role is the boundary operator
\[
\partial: {\rm C}_1(\Sigma,{\Bbb A})\rightarrow {\rm C}_0(\Sigma,{\Bbb A}),
\]
sending each oriented edge to the addition of its two endpoints (boundary points) with positive (resp. negative) sign when the oriented edge points to (resp. away from) the endpoint, extended through linearity. The kernel of $\partial$ defines the flow group ${\rm F}(\Sigma,{\Bbb A})$. Our first purpose is to characterize the group structures of ${\rm F}(\Sigma,{\Bbb A})$, the boundary chain subgroup ${\rm im}\,\partial$, and the homology group ${\rm H}_0(\Sigma,{\Bbb A}):={\rm C}_0(\Sigma,{\Bbb A})/{\rm im}\,\partial$.

As for producing a guiding principle, our idea originates from
Tutte \cite{Tutte-Homotopy Theorem, Tutte-Matroid-Theory}, who introduced (non-intrinsically for a graph by fixing an orientation) elementary chains for chain groups. The collection of supports of elementary chains straightforwardly forms a circuit system of a matroid on the ground set of the chain group. Taking supports of real-valued elementary flows of ${\rm F}(\Sigma,{\Bbb R})$ as circuits, we automatically obtain the frame matroid $\mathcal{M}(\Sigma)$ of signed graph $\Sigma$. It is then natural to characterize the signed-graph circuits into five patterns (see Theorem~\ref{thm:circuits}) listed by Zaslavsky \cite[p.\,54]{Zaslavsky-signed-graphs-DAM} case by case
without motivation and explanation.

The vector space ${\rm C}_1(\Sigma,{\Bbb R})$ is equipped with an inner product in an obvious way. The orthogonal complement of ${\rm F}(\Sigma,{\Bbb R})$ is the tension space ${\rm T}(\Sigma,{\Bbb R})$. The collection of supports of elementary tensions forms another matroid $\mathcal{M}^*(\Sigma)$, which is dual to $\mathcal{M}(\Sigma)$ automatically according to the equivalence of matroid duality and vector space orthogonality; see Proposition~\ref{Prop:Duality-and-othogonality}. Our next purpose is to characterize the circuits of $\mathcal{M}^*(\Sigma)$, that is, the co-circuits of $\mathcal{M}(\Sigma)$, in particular when $\Sigma$ contains possible outer-edges. We must also confirm that the characterization is exactly the bond defined by Zaslavsky \cite[p.\,363]{Zaslavsky-orientation-EJC}, where the detailed structural description was missing in the literature.

Tensions for a graph were initially considered by Berge \cite{Berge1} to describe colorings in terms of edges, and were formally introduced by Kochol \cite{Kochol1}, defined at each arc as the difference of potential functions at the endpoints of the arc. Later, Kochol \cite{Kochol3} modified tension (of a graph) as a chain whose summation over each directed circuit vanishes. This suggests that conceptually, tension is dual to flow. Since there is no inner product in the chain group ${\rm C}_1(\Sigma,{\Bbb A})$, we follow the standard treatment in algebraic topology to introduce the obviously defined bilinear pairing
\[
\langle\,,\rangle:{\rm C}_i(\Sigma,{\Bbb Z})\times{\rm C}_i(\Sigma,{\Bbb A})\rightarrow {\Bbb A}, \quad i=0,1.
\]
The tension group ${\rm T}(\Sigma,{\Bbb A})$ is the subgroup of chains $\bf g$ with coefficients in $\Bbb A$, such that for all integer-valued flows $\bf f$,
\[
\langle{\bf f},{\bf g}\rangle=0.
\]
Adjoint to $\partial$ is the co-boundary operator
\[
\delta:{\rm C}_0(\Sigma,{\Bbb A})\rightarrow{\rm C}_1(\Sigma,{\Bbb A}).
\]
Our next purpose is to characterize the structure of ${\rm T}(\Sigma,{\Bbb A})$, the kernel ${\rm ker}\,\delta$, the co-boundary group
${\rm im\,}\delta$, and the co-homology group ${\rm H}^1(\Sigma,{\Bbb A}):={\rm C}_1(\Sigma,{\Bbb A})/{\rm im\,}\delta$.

Let $\Bbb A$ be a finite abelian group. Its {\em torsion-$2$ subgroup} is the kernel of the homomorphism ${\Bbb A}\rightarrow 2{\Bbb A},a\mapsto2a$, denoted
${\rm Tor}_2({\Bbb A})$. It is well-known that the number of nowhere-zero flows of a graph $G$ over $\Bbb A$, has a polynomial counting pattern $\varphi(G,|{\Bbb A}|)$ in terms of the cardinality $|\Bbb A|$, regardless of the group structures of $\Bbb A$. The polynomial function $\varphi(G,x)$, determined by the polynomial counting pattern, is known as the {\em flow polynomial} of $G$; see Tutte \cite{Tutte-class-of-abelian-group}. For a signed graph $\Sigma$, however, the counting function of the number of nowhere-zero flows of $\Sigma$ over $\Bbb A$ depends not only on $|\Bbb A|$, but also on the order of its torsion-2 subgroup ${\rm Tor}_2(\Bbb A)$. We are motivated to introduce a unique bivariate flow polynomial $\varphi(\Sigma;t,x)$ for $\Sigma$, satisfying
\[
|{\rm F}_{\rm nz}(\Sigma,{\Bbb A})|=\varphi(\Sigma;|{\rm Tor}_2{\Bbb A}|,|{\Bbb A}|),
\]
where ${\rm F}_{\rm nz}(\Sigma,{\Bbb A})$ is the set of nowhere-zero
flows over $\Bbb A$. The polynomial $\varphi(\Sigma;t,x)$ reduces to a univariate polynomial $\varphi(\Sigma,x)$, whenever $\Sigma$ contains no negative circles. Set
\[
\varphi_{\rm odd}(\Sigma,x):=\varphi(\Sigma;1,x), \quad \varphi_{\rm even}(\Sigma,x):=\varphi(\Sigma;2,x).
\]
We see that $\varphi_{\rm odd}(\Sigma,n)$ and $\varphi_{\rm even}(\Sigma,n)$ count the number of nowhere-zero flows of $\Sigma$ over ${\Bbb Z}/n{\Bbb Z}$, with respect to odd integers $n\geq 1$ and to even integers $n\geq 2$. The polynomial $\varphi_{\rm even}(\Sigma,n)$ answers the question of Beck and Zaslavsky \cite[Problem 4.2]{Beck-Zaslavsky-JCTB}.

Due to the length of the present paper, we postpone the treatment of the tension group, the co-boundary group, the co-homology group, the bivariate chromatic polynomial, the bivariate tension polynomial, the multivariate Tutte polynomial and further studies on signed graphs with possible outer-edges to subsequent papers in this series.

\section{Definitions and notations}

Following Zaslavsky \cite{Zaslavsky-signed-graph-coloring-DM}, by a {\em graph} we mean a system $G=(V,E)$, with a finite vertex set $V$ and a finite edge set $E$, such that each edge has exactly two {\em ends}, each is connected to one or none vertex, but at least one of the two ends is connected to a vertex. A {\em loop} is an edge whose two ends are connected to a common vertex. A {\em link} is an edge whose two ends are connected to two distinct vertices. An {\em outer-edge} is an edge having only one end connected to a vertex. The {\em endpoints} of an edge are the vertices to which the ends of the edge are connected. A {\em signed graph} is a graph $G=(V,E)$ together with a {\em sign function} $\sigma:E\rightarrow\{-1,1\}$, denoted $\Sigma=(G,\sigma)$. Edges with positive (negative) sign are called {\em positive (negative) edges}. A signed graph is called {\em compact} if it contains no outer-edges, and {\em noncompact} if it contains outer-edges.

An {\em open line} is a connected 2-regular signed graph containing exactly two outer-edges. A {\em half-line} is a connected signed graph having degree 1 at exactly one vertex (called the {\em endpoint} of the half-line) and degree 2 at all other vertices. Each half-line contains exactly one outer-edge. A {\em line segment} is a connected signed graph having degree 1 at exactly two distinct vertices (called {\em endpoints} of the line segment) and degree 2 at all other vertices. In particular, a loop with its endpoint is a circle, an outer-edge with its endpoint is a half-line, and a link edge with its endpoints is a line segment. By a {\em line} we mean either an open line or a half-line or a line segment.

A {\em circle}\footnote{The name was suggested by Thom Zaslavsky to avoid using cycle which has too many other meanings.} (commonly known as {\em cycle}) is a connected 2-regular (degree 2 everywhere) signed graph containing no outer-edges. The {\em sign} of a circle $C$ is the product of its edge signs, denoted
\[
\sigma(C):=\prod_{e\in C}\sigma(e).
\]
A circle $C$ is called {\em positive} ({\em negative}) if $\sigma(C)=1$ ($\sigma(C)=-1$). A signed graph is called {\em balanced} if it does not contain outer-edges and its all circles are positive.
For signed graphs we immediately have the numerical invariants
\begin{align*}
c(\Sigma)&=\mbox{number of components of $\Sigma$,}\\
b(\Sigma)&=\mbox{number of balanced components of $\Sigma$,}\\
u(\Sigma)&=\mbox{number of unbalanced components of $\Sigma$,}\\
u_c(\Sigma)&=\mbox{number of compact unbalanced components of $\Sigma$.}
\end{align*}
For each $S\subseteq E$ edge subset of $\Sigma$, we define $c(S)$, $b(S)$, $u(S)$, and $u_c(S)$ as the corresponding numbers of the spanning signed subgraph $\Sigma|S:=(V,S,\sigma|_S)$.

By an {\em orientation} or {\em bi-direction} of an edge $e$ of $\Sigma$ we mean an assignment of two arrows, one to each of two ends of $e$, such that the two arrows are in the same direction if $e$ is positive and in opposite directions if $e$ is negative. Each edge $e$ has exactly two orientations, opposite each other; see Figure~\ref{Fig:Orientation}.
\begin{figure}[h]
\centering
\includegraphics[width=15mm]{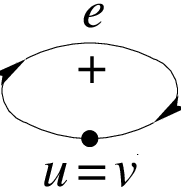}\hspace{11.5mm}
\includegraphics[width=15mm]{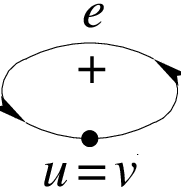}\hspace{11.5mm}
\includegraphics[width=15mm]{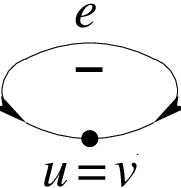}\hspace{11.5mm}
\includegraphics[width=15mm]{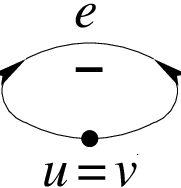}\\ \vspace{3mm}
\includegraphics[width=22mm]{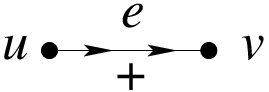}\hspace{5mm}
\includegraphics[width=22mm]{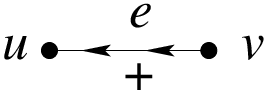}\hspace{5mm}
\includegraphics[width=22mm]{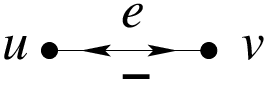}\hspace{5mm}
\includegraphics[width=22mm]{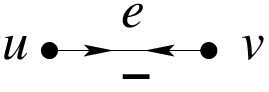}\\ \vspace{3mm}
\includegraphics[width=18.1mm]{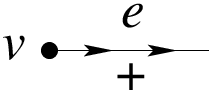}\hspace{8mm}
\includegraphics[width=18.1mm]{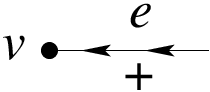}\hspace{8mm}
\includegraphics[width=18.1mm]{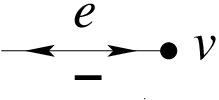}\hspace{8mm}
\includegraphics[width=18.1mm]{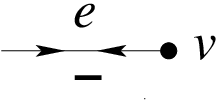}
\caption{Orientations of loop, link and outer-edge}\label{Fig:Orientation}
\end{figure}
More specifically, in the case that $e$ is a link or loop with endpoints $u,v$ (possibly identical), we often write $e=uv$ for convenience, and the two orientations of $e$ are expressed as $\overset{\rightarrow}{u}\overset{\rightarrow}{v},\overset{\leftarrow}{u}\overset{\leftarrow}{v}$ if $e$ is positive, and as $\overset{\leftarrow}{u}\overset{\rightarrow}{v},\overset{\rightarrow}{u}\overset{\leftarrow}{v}$ if $e$ is negative. In the case that $e$ is an outer-edge with only one endpoint $v$, we write $e=vo$ (where $o$ stands for ``outside"), the two orientations of $e$ are expressed as
$\overset{\rightarrow}{v}\overset{\rightarrow}{o},\overset{\leftarrow}{v}
\overset{\leftarrow}{o}$ if $e$ is positive, and as
$\overset{\leftarrow}{v}\overset{\rightarrow}{o}, \overset{\rightarrow}{v}\overset{\leftarrow}{o}$ if $e$ is negative. Each edge with an orientation is frequently called an {\em arc} or {\em oriented edge} or {\em directed edge}. For each edge $e$, we denote by $\vec{e\,}$ the edge $e$ together with an orientation (selected arbitrarily); the same edge $e$ with the other orientation is then denoted by $\vec{e\;}^{-1}$, called the {\em opposite arc} of $\vec{e}$. For each $S$ edge subset of $\Sigma$,
 the collection $\Vec{S}:=\{\vec{e},\vec{e\;}^{-1}\:|\:e\in S\}$ is called the {\em arc set} or {\em oriented edge set} of $S$.
\vspace{1ex}

\noindent
{\bf Remark.} We may view or identify each edge as an embedding in a Euclidean space. More specifically, the imbedding is an injective continuous map $\iota_e$ of $[0,1]$ the closed unit interval to ${\Bbb R}^d$ for each link or loop $e$, and of $[0,1)$ the half-closed and half-open interval to ${\Bbb R}^d$ for each outer-edge $e$. Then $e$ is inherited a topology from the open interval $(0,1)$ of $\Bbb R$ via the imbedding $\iota_e$. The graph $G$, viewed as disjoint union of the vertex set $V$ and its (open) edges, inherits a topology as follows: a subset $\mathcal{U}$ of $G$ is an {\em open set} provided that (i) for each edge $e$ the intersection $\mathcal{U}\cap e$ is open in $e$; (ii) for each vertex $v\in \mathcal{U}\cap V$ and each edge $e$ at $v$ with $v=\iota_e(i)$, where $i\in\{0,1\}$, there exist real numbers $a,b\in(0,1)$ such that $\iota_e(0,a)\subset\mathcal{U}$ if $i=0$, and $\iota_e(b,1)\subset\mathcal{U}$ if $i=1$. Hence the graph $G$ (viewing as a topological space) is compact if it does not contain outer-edges, and is non-compact if it contains outer-edges.

The {\em incidence function} of $\Sigma$ is a function $[\,,]:V\times\vec{E}\rightarrow{\Bbb Z}$, satisfying $[v,\vec{e\;}^{-1}]=-[v,\vec{e\,}]$, determined uniquely by
\begin{equation}\label{eq:incidence-function}
[v, \vec{e\,}]=\left\{\begin{array}{rl}
1 & \mbox{if $\vec{e\,}$ is a non-loop arc pointing to $v$,}\\
2 & \mbox{if $\vec{e\,}$ is a negative loop pointing to $v$,}\\
0 & \mbox{otherwise.}
\end{array}\right.
\end{equation}
An {\em orientation} on an edge subset $S$ of $\Sigma$ is a function $\omega:S\rightarrow\vec{S}$ such that $\omega(e)\in\{\vec{e\,},\vec{e\;}^{-1}\}$. Then $\omega(S)$ is a subset of $\vec{S}$ such that
\[
\omega(S)\cap\omega(S)^{-1}=\varnothing, \quad \omega(S)\cup\omega(S)^{-1}=\vec{S},
\]
where $\omega(S)^{-1}=\{\vec{e}^{\;-1}: \vec{e\,}\in\omega(S)\}$. For simplicity, we often write $\omega=\omega(S)$ and view $\omega$ as a subset of $\vec{S}$. An {\em orientation} of $\Sigma$ is an orientation on the edge set $E$. The {\em incidence matrix} of $\Sigma$, relative to an orientation $\omega$ of $\Sigma$, is a $V\times E$ matrix
\begin{equation}\label{eq:incidence-matrix}
M=\big[m_{ve}\big]_{(v,e)\in V\times E}\;\; \mbox{with}\;\;m_{ve}=[v,\vec{e\,}],\; \vec{e}\in\omega.
\end{equation}

Let $\Sigma$ be a signed graph and $\Bbb A$ be an abelian group throughout. Denote by ${\rm Tor}_2({\Bbb A})$ the torsion-2 subgroup of $\Bbb A$, consisting of members $a\in\Bbb A$ such that $2a=0$. A {\em $1$-chain} of $\Sigma$ over $\Bbb A$ is a function ${\bf c}:\vec{E}\rightarrow\Bbb A$ such that ${\bf c}(\vec{e\;}^{-1})=-{\bf c}(\vec{e\,})$ for each edge $e$ of $\Sigma$. It is convenient to write each 1-chain $\bf c$ as a formal sum
\[
{\bf c}=\sum_{e\in E}{\bf c}(\vec{e\,})\vec{e\,},
\]
where terms with coefficients zero are omitted in practice. For example, given $S$ an edge subset with an orientation $\omega(S)$; each multiset $(S,m)$, where $m:S\rightarrow{\Bbb Z}_{\geq 0}$ is the multiplicity function, defines an integer-valued 1-chain
\[
{\bf I}_{\omega(S,m)}= \sum_{\vec{e\,}\in\omega(S)}m(e)\,\vec{e\,}.
\]
The set ${\rm C}_1(\Sigma,{\Bbb A})$ of $1$-chains of $\Sigma$ over $\Bbb A$ forms an abelian group, called the {\em $1$-chain group} or just {\em chain group} of $\Sigma$. Let ${\rm C}_0(\Sigma,{\Bbb A})$ denote the group of functions of the vertex set $V$ to $\Bbb A$, whose members are known as {\em $0$-chains} of $\Sigma$.

The {\em boundary operator} $\partial:{\rm C}_1(\Sigma,{\Bbb Z})\rightarrow {\rm C}_0(\Sigma,{\Bbb Z})$ is a homomorphism defined by
\begin{equation}
\partial\vec{e\,}
=\left\{\begin{array}{ll}
v-u & \mbox{if $\vec{e\,}=\overset{\rightarrow}{u}\overset{\rightarrow}{v}$,}\\
u+v & \mbox{if $\vec{e\,}=\overset{\leftarrow}{u}\overset{\rightarrow}{v}$,}\\
v & \mbox{if $\vec{e\,}=\overset{\leftarrow}{v}\overset{\,\thicksim}{o}$,}
\end{array}\right.
\end{equation}
where $\overset{\thicksim}{o}$ denotes either $\overset{\rightarrow}{o}$ or $\overset{\leftarrow}{o}$, extended through linearity. The operator $\partial$ can be canonically extended to a homomorphism
\[
\partial:{\rm C}_1(\Sigma,{\Bbb A})\rightarrow {\rm C}_0(\Sigma,{\Bbb A})
\]
by taking tensor of ${\rm C}_1(\Sigma,{\Bbb Z})$ and ${\rm C}_0(\Sigma,{\Bbb Z})$ with $\Bbb A$. More specifically, the boundary $\partial{\bf c}$ of 1-chain ${\bf c}\in{\rm C}_1(\Sigma,{\Bbb A})$ is explicitly given by
\begin{equation}
(\partial{\bf c})(v)=\sum_{e\in E}[v,\vec{e}\,]{\bf c}(\vec{e}\,).
\end{equation}
The {\em cycle group} ${\rm Z}_1(\Sigma,{\Bbb A})$, the {\em boundary group} ${\rm B}_0(\Sigma,{\Bbb A})$, and the {\em $0$th homology group} ${\rm H}_0(\Sigma,{\Bbb A})$ of $\Sigma$ over $\Bbb A$ are defined by
\begin{align}
{\rm Z}_1(\Sigma,{\Bbb A}):&=\ker\partial,\\
{\rm B}_0(\Sigma,{\Bbb A}):&={\rm im}\,\partial,\\
{\rm H}_0(\Sigma,{\Bbb A}):&={\rm C}_0(\Sigma,{\Bbb A})/{\rm B}_0(\Sigma,{\Bbb A}).
\end{align}
Each member of ${\rm Z}_1(\Sigma,{\Bbb A})$ is called a {\em cycle} of $\Sigma$ over $\Bbb A$; and each member ${\bf b}$ of ${\rm B}_0(\Sigma,{\Bbb A})$, written as ${\bf b}=\partial{\bf c}$, is called the {\em boundary} of the 1-chain $\bf c$. According to the chain complex
\[
0\rightarrow{\rm C}_1(\Sigma,{\Bbb A})\stackrel{\partial}{\rightarrow}{\rm C}_0(\Sigma,{\Bbb A})\rightarrow 0,
\]
we have the {\em $1$-homology group} ${\rm H}_1(\Sigma,{\Bbb A})={\rm Z}(\Sigma,{\Bbb A})$. In graph theory, a cycle is usually called a {\em flow}, the cycle group is called the {\em flow group}, and ${\rm Z}_1(\Sigma,{\Bbb A})$ is denoted by ${\rm F}(\Sigma,{\Bbb A})$.

Adjoint to $\partial$ is the {\em co-boundary operator} $\delta:{\rm C}_0(\Sigma,{\Bbb A})\rightarrow{\rm C}_1(\Sigma,{\Bbb A})$, defined by
\begin{align}
\delta{\bf p}(\vec{e}\,) &
= \left\{\begin{array}{ll}
{\bf p}(v)-{\bf p}(u) & \mbox{if\, $\vec{e}\,=\overset{\rightarrow}{u}\overset{\rightarrow}{v}$,}\\
{\bf p}(u)+{\bf p}(v) & \mbox{if\, $\vec{e}\,=\overset{\leftarrow}{u}\overset{\rightarrow}{v}$,}\\
{\bf p}(u) & \mbox{if\, $\vec{e}\,=\overset{\leftarrow}{u}\overset{\,\thicksim}{o}$.}
\end{array}\right.
\end{align}
Of course, it is assumed that $\delta{\bf p}(\vec{e\;}^{-1}) = -\delta{\bf p}(\vec{e}\,)$. Analogous to $\partial$, the co-boundary chain $\delta{\bf p}$ (as a function on $\vec{E}$) can be uniformly written as
\begin{equation}
\delta{\bf p}(\vec{e}\,)= \sum_{v\in V}[v,\vec{e}\:]{\bf p}(v).
\end{equation}
The {\em co-cycle group} ${\rm Z}^0(\Sigma,{\Bbb A})$, the {\em co-boundary group} ${\rm B}^1(\Sigma,{\Bbb A})$, and the {\em co-homology group} ${\rm H}^1(\Sigma,{\Bbb A})$ of $\Sigma$ over $\Bbb A$ are defined by
\begin{align}
{\rm Z}^0(\Sigma,{\Bbb A}):&=\ker\delta,\\
{\rm B}^1(\Sigma,{\Bbb A}):&={\rm im}\,\delta,\\
{\rm H}^1(\Sigma,{\Bbb A}):&={\rm C}^1(\Sigma,{\Bbb A})/{\rm B}^1(\Sigma,{\Bbb A}),
\end{align}
according to the co-chain complex
\[
0\rightarrow{\rm C}^0(\Sigma,{\Bbb A})\stackrel{\delta}\rightarrow {\rm C}^1(\Sigma,{\Bbb A})\rightarrow 0,
\]
where ${\rm C}^i(\Sigma,{\Bbb A}):={\rm C}_i(\Sigma,{\Bbb A})$, $i=0,1$.
When ${\Bbb A}={\Bbb Z}$ or ${\Bbb R}$, the corresponding groups are known as {\em lattices} and vector spaces respectively.

In order to define tension on $\Sigma$ over $\Bbb A$, a notion dual to flow conceptually, we need to make use of certain orthogonality over $\Bbb A$. Since there is no way to equip an inner product on ${\rm C}_1(\Sigma,{\Bbb A})$, it is natural to consider the canonical {\em bilinear pairing}
\begin{equation}
\langle\,,\rangle:{\rm C}_i(\Sigma,{\Bbb Z})\times {\rm C}_i(\Sigma,{\Bbb A})\rightarrow{\Bbb A}, \quad i=0,1,
\end{equation}
defined for ${\bf c}\in{\rm C}_i(\Sigma,{\Bbb Z})$ and ${\bf d}\in{\rm C}_i(\Sigma,{\Bbb A})$ by
\[
\langle{\bf c},{\bf d}\rangle=\left\{\begin{array}{ll}
\sum_{v\in V} {\bf c}(v){\bf d}(v) & \mbox{if $i=0$,}\vspace{2mm}\\
\sum_{e\in E} {\bf c}(\vec{e}\,){\bf d}(\vec{e}\,) & \mbox{if $i=1$.}
\end{array}\right.
\]
Now we are ready to introduce the notion of tension of $\Sigma$, extending the potential difference of Berge \cite{Berge1} and Kochol \cite{Kochol1} for graphs.

\begin{defn}
A {\em tension} of $\Sigma$ over $\Bbb A$ is a $1$-chain ${\bf g}$ such that $\langle{\bf f},{\bf g}\rangle=0$ for all integer-valued flows $\bf f$ of $\Sigma$. The set ${\rm T}(\Sigma,{\Bbb A})$ of tensions of $\Sigma$ forms a group, called the {\em tension group} of $\Sigma$ over $\Bbb A$.
\end{defn}

The following Lemma~\ref{Lem:Potential-is-tension} explains why the co-boundary operator $\delta$ is adjoint to the boundary operator $\partial$.

\begin{lem}\label{Lem:Potential-is-tension}
$\langle\partial{\bf c},{\bf p}\rangle
=\langle{\bf c},\delta{\bf p}\rangle$,
${\rm im}\,\delta\subseteq{\rm T}(\Sigma,{\Bbb A})$.
\end{lem}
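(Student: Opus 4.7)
The plan is to unfold each side of the adjointness identity into a double sum over $V\times E$ using the explicit formulas already displayed for $\partial$ and $\delta$, and observe that the two sums differ only in the order of summation. Take $\mathbf{c}\in{\rm C}_1(\Sigma,{\Bbb Z})$ and $\mathbf{p}\in{\rm C}_0(\Sigma,{\Bbb A})$ (the symmetric case with the roles of $\Bbb Z$ and $\Bbb A$ swapped is identical). Using the definition of the pairing and the formulas $(\partial\mathbf{c})(v)=\sum_{e\in E}[v,\vec{e}\,]\mathbf{c}(\vec{e}\,)$ and $(\delta\mathbf{p})(\vec{e}\,)=\sum_{v\in V}[v,\vec{e}\,]\mathbf{p}(v)$, I compute
\[
\langle\partial\mathbf{c},\mathbf{p}\rangle
=\sum_{v\in V}\sum_{e\in E}[v,\vec{e}\,]\,\mathbf{c}(\vec{e}\,)\,\mathbf{p}(v)
=\sum_{e\in E}\sum_{v\in V}[v,\vec{e}\,]\,\mathbf{c}(\vec{e}\,)\,\mathbf{p}(v)
=\langle\mathbf{c},\delta\mathbf{p}\rangle,
\]
with the common summand interpreted via the canonical $\Bbb Z$-module structure on $\Bbb A$. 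One should also check that each summand is well defined independently of the arbitrary orientation choice $\vec{e}$ versus $\vec{e}^{\;-1}$; but this is immediate from the simultaneous sign change $[v,\vec{e}^{\;-1}]=-[v,\vec{e}\,]$ and $\mathbf{c}(\vec{e}^{\;-1})=-\mathbf{c}(\vec{e}\,)$, which produces two cancelling minus signs.

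The second assertion is then a one-line corollary. Given any $\mathbf{g}\in{\rm im}\,\delta$, write $\mathbf{g}=\delta\mathbf{p}$ for some $\mathbf{p}\in{\rm C}_0(\Sigma,{\Bbb A})$; then for every integer-valued flow $\mathbf{f}\in{\rm F}(\Sigma,{\Bbb Z})$,
\[
\langle\mathbf{f},\mathbf{g}\rangle
=\langle\mathbf{f},\delta\mathbf{p}\rangle
=\langle\partial\mathbf{f},\mathbf{p}\rangle
=\langle 0,\mathbf{p}\rangle=0,
\]
so $\mathbf{g}\in{\rm T}(\Sigma,{\Bbb A})$ by the defining condition of a tension.

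The main obstacle is cosmetic rather than conceptual: writing the expansions in a way that uniformly covers all three edge types (links, loops, and outer-edges) without a case analysis. This is already encapsulated in the closed-form expressions for $\partial\mathbf{c}$ and $\delta\mathbf{p}$ through the incidence function $[v,\vec{e}\,]$, in particular through the convention that $[v,\vec{e}\,]=2$ for a negative loop arc pointing to $v$ and $[v,\vec{e}\,]=0$ when $v$ is not an endpoint of $e$. Once these unified formulas are in hand, the lemma is the swap of a finite double sum together with a one-step substitution.
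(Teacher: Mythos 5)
Your proposal is correct and follows essentially the same route as the paper's own proof: both expand $\langle\partial{\bf c},{\bf p}\rangle$ via the incidence-function formulas for $\partial$ and $\delta$, swap the order of the finite double sum over $V\times E$, and then obtain ${\rm im}\,\delta\subseteq{\rm T}(\Sigma,{\Bbb A})$ by applying the adjointness identity to an integer-valued flow ${\bf f}$ with $\partial{\bf f}=0$. Your additional check of orientation-independence of each summand is a harmless (and reasonable) extra remark not spelled out in the paper.
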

\begin{proof}
Recall that $(\partial{\bf c})(v)=\sum_{e\in E}[v,\vec{e}\,]{\bf c}(\vec{e}\,)$ and $(\delta{\bf p})(\vec{e\,})=\sum_{v\in V}{\bf p}(v)[v,\vec{e\,}]$. Then
\begin{align*}
\langle\partial{\bf c},{\bf p}\rangle &=\sum_{v\in V}
(\partial{\bf c})(v){\bf p}(v)\\
&=\sum_{v\in V}\sum_{e\in E}[v,\vec{e}\,]{\bf c}(\vec{e}\,){\bf p}(v)\\
&= \sum_{e\in E} {\bf c}(\vec{e}\,) \sum_{v\in V}[v,\vec{e}\,]{\bf p}(v)\\
&= \sum_{e\in E} {\bf c}(\vec{e}\,)(\delta{\bf p})(\vec{e}\,)\\
&=\langle{\bf c},\delta{\bf p}\rangle.
\end{align*}
If ${\bf c}$ is any integer-valued flow, that is, $\partial{\bf c}=0$, then $\langle{\bf c},\delta{\bf p}\rangle=\langle\partial{\bf c},{\bf p}\rangle=0$. This means that $\delta{\bf p}$ is a tension over $\Bbb A$; consequently, ${\rm im}\,\delta\subseteq{\rm T}(\Sigma,{\Bbb A})$.
\end{proof}

We call each function $\nu:V\rightarrow\{-1,1\}$ a {\em switching} on $\Sigma$. A {\em switching} at a vertex $u$ is the function $\nu$ such that $\nu(u)=-1$ and $\nu(v)=1$ for all $v\neq u$. For each $\nu$ switching, let $\Sigma^\nu$ denote the signed graph $(V,E,\sigma^\nu)$, whose sign function is given by
\[
\sigma^\nu(e)=\left\{\begin{array}{ll}
\nu(u)\sigma(e)\nu(v) & \mbox{if $e=uv$ is a loop or link,} \\
\nu(u)\sigma(e)       & \mbox{if $e=uo$ is an outer-edge.}
\end{array}\right.
\]
Each arc $\vec{e}$ of $\Sigma$, after a switching $\nu$ at $v$, becomes an arc $\vec{e}^{\;\nu}$ of $\Sigma^\nu$. More specifically, if $v$ is an endpoint of $e$ then the arrow of $\vec{e}^{\;\nu}$ at $v$ is opposite to that of $\vec{e}$; if $v$ is not an endpoint of $e$ then the arrow of $\vec{e}^{\;\nu}$ at $v$ is the same as that of $\vec{e}$. In notation of incidence numbers,
\[
[v,\vec{e}^{\:\nu}]=[v,\vec{e}\,]\nu(v).
\]
Each switching $\nu$ induces an isomorphism ${\rm C}_0(\Sigma,{\Bbb A})\rightarrow{\rm C}_0(\Sigma^\nu,{\Bbb A})$, ${\bf p}\mapsto {\bf p}^\nu$, given by
\begin{align*}
{\bf p}^\nu(v)&=\nu(v){\bf p}(v),\quad v\in V;
\end{align*}
and an isomorphism ${\rm C}_1(\Sigma,{\Bbb A})\rightarrow{\rm C}_1(\Sigma^\nu,{\Bbb A})$, ${\bf c}\mapsto {\bf c}^\nu$, where
\begin{align*}
{\bf c}^\nu(\vec{e}^{\,\nu})&={\bf c}(\vec{e}\,),\quad \vec{e}\in\vec{E}.
\end{align*}

\begin{lem}
Switching preserves flows, tensions, boundary chains, and co-boundary chains. The preserving property follows from
\begin{equation}\label{eq:switching preversing}
\partial({\bf c}^\nu)=(\partial{\bf c})^\nu,
\quad \delta({\bf p}^\nu)=(\delta{\bf p})^\nu.
\end{equation}
\end{lem}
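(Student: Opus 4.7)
The plan is to derive the two displayed identities in \eqref{eq:switching preversing} by direct computation from the closed-form expressions $(\partial{\bf c})(v)=\sum_{e}[v,\vec{e}\,]{\bf c}(\vec{e}\,)$ and $(\delta{\bf p})(\vec{e}\,)=\sum_{v}[v,\vec{e}\,]{\bf p}(v)$, and then deduce the four preservation statements as formal consequences. The arithmetic engine is the incidence rule $[v,\vec{e}^{\,\nu}]=[v,\vec{e}\,]\nu(v)$ recorded just before the lemma, combined with the definitional equalities ${\bf c}^\nu(\vec{e}^{\,\nu})={\bf c}(\vec{e}\,)$ and ${\bf p}^\nu(v)=\nu(v){\bf p}(v)$.

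First, I would unfold the left side of the boundary identity inside $\Sigma^\nu$: $(\partial{\bf c}^\nu)(v)=\sum_{e}[v,\vec{e}^{\,\nu}]{\bf c}^\nu(\vec{e}^{\,\nu})=\sum_{e}[v,\vec{e}\,]\nu(v){\bf c}(\vec{e}\,)=\nu(v)(\partial{\bf c})(v)$, which equals $((\partial{\bf c})^\nu)(v)$ by the definition of the switched $0$-chain. The co-boundary identity collapses even more cleanly, producing a double factor $\nu(v)^2=1$ at every vertex: $(\delta{\bf p}^\nu)(\vec{e}^{\,\nu})=\sum_{v}[v,\vec{e}^{\,\nu}]{\bf p}^\nu(v)=\sum_{v}[v,\vec{e}\,]\nu(v)\cdot\nu(v){\bf p}(v)=\sum_{v}[v,\vec{e}\,]{\bf p}(v)=(\delta{\bf p})(\vec{e}\,)=((\delta{\bf p})^\nu)(\vec{e}^{\,\nu})$, the last equality being the defining relation for the switched $1$-chain. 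Preservation of flows, boundary chains, and co-boundary chains then follows at once: if $\partial{\bf c}=0$ then $\partial({\bf c}^\nu)=(\partial{\bf c})^\nu=0$, and the image of $\partial$ (resp.\ of $\delta$) on $\Sigma$ maps isomorphically under switching onto the corresponding image on $\Sigma^\nu$ because ${\bf c}\mapsto{\bf c}^\nu$ and ${\bf p}\mapsto{\bf p}^\nu$ are already noted to be group isomorphisms.

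For tensions, I would first record that the canonical bilinear pairing is a pure relabeling invariant of switching, namely $\langle{\bf f}^\nu,{\bf g}^\nu\rangle_{\Sigma^\nu}=\sum_{e}{\bf f}^\nu(\vec{e}^{\,\nu}){\bf g}^\nu(\vec{e}^{\,\nu})=\sum_{e}{\bf f}(\vec{e}\,){\bf g}(\vec{e}\,)=\langle{\bf f},{\bf g}\rangle_\Sigma$. Combined with flow-preservation (which yields a bijection ${\bf f}\mapsto{\bf f}^\nu$ between integer-valued flows of $\Sigma$ and of $\Sigma^\nu$), the defining tension condition for ${\bf g}$ on $\Sigma$ transports verbatim to the tension condition for ${\bf g}^\nu$ on $\Sigma^\nu$. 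The only subtlety that warrants an explicit check is the behavior at a negative loop at a switched vertex, where the incidence is $\pm 2$ and both end-arrows flip simultaneously; a moment's inspection confirms that $[v,\vec{e}^{\,\nu}]=-[v,\vec{e}\,]=[v,\vec{e}\,]\nu(v)$ still holds in that boundary case, so no separate argument is needed. In short, no step is genuinely hard; the main obstacle is purely notational, keeping the two ambient signed graphs distinct so that $[v,\vec{e}\,]$ and $[v,\vec{e}^{\,\nu}]$ are not conflated while applying the incidence rule.
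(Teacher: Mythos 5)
Your proposal is correct and follows essentially the same route as the paper's own proof: both verify the two commutation identities $\partial({\bf c}^\nu)=(\partial{\bf c})^\nu$ and $\delta({\bf p}^\nu)=(\delta{\bf p})^\nu$ by direct computation with the incidence rule $[v,\vec{e}^{\,\nu}]=[v,\vec{e}\,]\nu(v)$, deduce preservation of flows, boundary chains and co-boundary chains formally, and handle tensions via the invariance of the bilinear pairing under switching together with the bijection between integer-valued flows of $\Sigma$ and of $\Sigma^\nu$. Your extra check at a negative loop of a switched vertex is a harmless (and correct) addition that the paper leaves implicit.
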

\begin{proof}
We first show the commutativity between the switching and the boundary (co-boundary) operator in \eqref{eq:switching preversing}. It is trivial to check the commutativity. However, a formal verification is needed. By definitions of ${\bf c}^\nu$ and $\partial$, and definitions of ${\bf p}^\nu$ and $\delta$, letting arcs $\vec{e}\,\in\Sigma$ and $\vec{e}^{\,\nu}\in\Sigma^\nu$, we have
\begin{align*}
\partial({\bf c}^\nu)(u) &= \sum_{e\in E}[u,\vec{e}{\,^\nu}]{\bf c}^\nu(\vec{e}{\,^\nu})\\
&=\sum_{e\in E}[u,\vec{e}\,]\nu(u){\bf c}(\vec{e}\,)\\
&=\nu(u)(\partial{\bf c})(u)\\
&=(\partial{\bf c})^\nu(u);
\end{align*}
\begin{align*}
\delta({\bf p}^\nu)(\vec{e}^{\,\nu}) &=\sum_{v\in V}[v,\vec{e}^{\,\nu}]{\bf p}^\nu(v)\\
&=\sum_{v\in V}[v,\vec{e}\,]\nu(v)\nu(v){\bf p}(v)\\
&=\delta{\bf p}(\vec{e}\,)\\
&=(\delta{\bf p})^\nu(\vec{e}^{\,\nu}).
\end{align*}
Let $\bf f$ be a flow of $\Sigma$. Then $\partial({\bf f}^\nu)=(\partial{\bf f})^\nu=0$. It means that ${\bf f}^\nu$ is a flow of $\Sigma^\nu$. We see that switching preserves flows.

Let ${\bf g}$ be a tension of $\Sigma$. For each integer-valued flow ${\bf c}$ of $\Sigma^\nu$, the chain ${\bf c}^\nu$ is an integer-valued flow of $\Sigma$, as we have just seen. Then
\[
\langle{\bf c},{\bf g}^\nu\rangle= \sum_{e\in E}{\bf c}(\vec{e}^{\,\nu}){\bf g}^\nu(\vec{e}^{\,\nu}) = \sum_{e\in E}{\bf c}^\nu(\vec{e}\,){\bf g}(\vec{e}\,)=0.
\]
This means that ${\bf g}^\nu$ is a tension of $\Sigma^\nu$. We see that switching preserves tensions.

It is trivial to check that switching preserves boundary chains and co-boundary chains.
\end{proof}

Integer-valued flows are closely related to directed closed walks. By a {\em walk} in $\Sigma$ we mean a sequence $W$ of vertices and edges, arranged alternatingly between vertices and edges, such that each vertex of $W$ is an endpoint of the edge ahead of it and the edge behind it. Reversing the order of $W$ becomes another walk, denoted $W^{-1}$. A walk $W$ is called a {\em bounded walk} if its initial member and the terminal member of $W$ are both vertices, a {\em half-open walk} if its initial and terminal members consist of one vertex and one outer-edge, and {\em open walk}~\footnote{The open walk here, requiring the initial and terminal edges to be outer-edges, is different from the open walk defined by Chen, Wang and Zaslavsky \cite{Chen-Wang-Zaslavsky-DM}.} if its initial and terminal members are both outer-edges. For instance, the following three are bounded walk, half-open walk, and open walk
\[
\begin{array}{ll}
W=v_0e_1v_1e_2\ldots v_{k-1}e_mv_m, &\\
W=v_0e_1v_1e_2\ldots e_mv_ke_{m+1}, & \mbox{where $e_{m+1}$ is an outer-edge,}\\
W=e_1v_1e_2v_2\ldots e_mv_ke_{m+1}, & \mbox{where $e_1$ and $e_{m+1}$ are outer-edges.}
\end{array}
\]
The {\em sign} of a walk $W$ is the product
\[
\sigma(W):=\prod_{i=1}^m\sigma(e_i).
\]
Whenever the edges $e_i$ of $W$ are chosen orientations such that
\begin{align*}
[v_i,\vec{e}_i]=-[v_i,\vec{e}_{i+1}]\quad \mbox{({\em coherent\,})}
\end{align*}
at each {\em internal} (neither initial nor terminal) vertex $v_i$, the walk with the coherent arcs is called a {\em directed walk} and the coherent orientation is called a {\em direction} of $W$, denoted $\omega(W)$. Reversing the sequence order of $W$ but keeping the arcs of $\omega(W)$, the obtained directed walk is denoted by $\omega(W^{-1})$.

There exist exactly two directions on each walk $W$, opposite each other. Each directed walk $\omega(W)$ can be viewed as a multiset of arcs, consisting of its arcs, and defines a 1-chain
\begin{align}
{\bf I}_{\omega(W)}&=\sum_{\vec{e}\in\omega(W)} \vec{e\,}.
\end{align}
It is convenient to simply write ${\bf I}_{\omega(W)}$ as $\omega(W)$.
If $\omega:=\omega(W)$ is a bounded directed walk, we define {\em incidence numbers} of $\omega$ at its {\em endpoints} (the initial and terminal vertices of $W$) as
\begin{equation}
[v_0,\omega]=(\partial\omega)(v_0),\quad [v_k,\omega]=(\partial\omega)(v_k).
\end{equation}
In the case that $W$ is a closed walk, it is straightforward to see that
\[
[v_0,\omega]=\left\{\begin{array}{ll}
0 & \mbox{if $\omega$ is a directed closed positive walk,}\\
\pm2 & \mbox{if $\omega$ is a directed closed negaitive walk.}\\
\end{array}\right.
\]

\begin{lem}
Let $\omega(W)$ be a directed walk. If $\,W$ is an open walk or a bounded closed positive walk, then ${\bf I}_{\omega(W)}$ is an integer-valued flow.
\end{lem}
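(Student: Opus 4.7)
The goal is to verify $\partial\,{\bf I}_{\omega(W)} = 0$, i.e.\ $(\partial\,{\bf I}_{\omega(W)})(v) = 0$ for every $v \in V$. Expanding ${\bf I}_{\omega(W)} = \sum_{\vec{e}\in\omega(W)}\vec{e}$ and using the formula $(\partial{\bf c})(v) = \sum_{e\in E}[v,\vec{e}\,]{\bf c}(\vec{e}\,)$, this reduces to showing
\[
\sum_{\vec{e}\in\omega(W)} [v, \vec{e}\,] = 0
\]
for every vertex $v$. The plan is a direct pairing argument based on the coherence condition. At each internal vertex-slot $v_j$ of $W$, flanked by the arcs $\vec{e}_j$ and $\vec{e}_{j+1}$, coherence gives $[v_j, \vec{e}_j\,] + [v_j, \vec{e}_{j+1}\,] = 0$, so the two flanking contributions at that slot cancel. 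Grouping the terms of the right-hand sum by vertex-slots of the walk, every arc-end incident to a vertex of $\Sigma$ should be consumed by exactly one coherent pair and cancel, leaving only the incidences at non-internal slots of $W$.

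In the \emph{open walk} case, both the initial and terminal members of $W$ are outer-edges, so every vertex of $W$ occupies an internal slot. The pairing consumes every nonzero contribution on the right-hand side, and $(\partial\,{\bf I}_{\omega(W)})(v) = 0$ for all $v$. In the \emph{bounded closed positive walk} case $W = v_0\,e_1\,v_1\cdots e_m\,v_m$ with $v_0 = v_m$, the internal slots $v_1, \ldots, v_{m-1}$ cancel their contributions as above; the only unpaired terms come from the initial arc $\vec{e}_1$ at $v_0$ and the terminal arc $\vec{e}_m$ at $v_m$, whose sum at $v_0 = v_m$ is exactly $[v_0, \omega(W)]$. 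By the computation for closed walks stated immediately before the lemma, this quantity equals $0$ precisely when $W$ is positive. At any vertex distinct from $v_0$ only internal-slot pairs occur, so $(\partial\,{\bf I}_{\omega(W)})(v) = 0$ there too, and ${\bf I}_{\omega(W)}$ is a flow.

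The main technical caution is the bookkeeping when the walk revisits vertices, particularly when an arc $\vec{e}_i$ is a loop at $v$ (so $v_{i-1} = v_i = v$): then $\vec{e}_i$ contributes the single incidence $[v, \vec{e}_i\,]$ on the right-hand side while touching two distinct vertex-slots of $W$. I expect this to be resolved by reading the coherence identity on an end-by-end basis rather than arc-by-arc, so that the two ends of a loop are paired individually with the adjacent ends of the two neighbouring arcs. With this convention, the cancellation proceeds exactly as in the non-loop case, and the argument above completes the proof.
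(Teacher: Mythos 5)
Your proof is correct and follows essentially the same route as the paper's: cancellation of incidences at internal vertex-slots via the coherence condition, with the only surviving contribution being the endpoint term $[v_0,\vec{e}_1]+[v_k,\vec{e}_k]$, which vanishes for open walks (no endpoints) and for closed walks precisely when $\sigma(W)=1$. Your end-by-end pairing convention for loops and revisited vertices is in fact more careful than the paper, which simply asserts that $\partial{\bf I}_{\omega(W)}$ vanishes at internal vertices and then derives the endpoint identity $[v_k,\vec{e}_k]=-\sigma(W)[v_0,\vec{e}_1]$ explicitly rather than citing the displayed formula for $[v_0,\omega]$ as you do.
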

\begin{proof}
Note that $\partial{\bf I}_{\omega(W)}$ vanishes at each vertex other than its endpoints $v_0$ and $v_k$. Notice that $[v_k,\vec{e}_k]=-\sigma(W)[v_0,\vec{e}_1]$, where $\vec{e}_1,\vec{e}_k\in\omega(W)$. Then
\begin{align*}
\partial{\bf I}_{\omega(W)}&=[v_0,\vec{e}_1]v_0+[v_k,\vec{e}_k]v_k\\ &=[1-\sigma(W)][v_0,\vec{e}_1]v_0\quad (\mbox{if $v_0=v_k$}).
\end{align*}
If $W$ is an open walk, then $W$ has no endpoints; it is clear that ${\bf I}_{\omega(W)}$ is a flow. If $W$ is a bounded closed walk, then ${\bf I}_{\omega(W)}$ is a flow if and only if $W$ is positive.
\end{proof}

\begin{lem}[{Chen~\cite[p.~2217]{Chen-GC}}]
Let ${\bf f}$ be an integer-valued flow of connected $\Sigma$. Then there exists either a directed closed positive walk $\omega(W)$, or a directed open walk $\omega(W)$ with initial outer-arc and terminal outer-arc, such that ${\bf f}={\bf I}_{\omega(W)}$.
\end{lem}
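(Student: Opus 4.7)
The plan is to realize $\bf f$ as a single directed walk by a greedy bi-directed Eulerian construction, combined with an induction that uses the connectedness of $\Sigma$ to splice disjoint pieces of support.

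First I would choose an orientation $\omega$ of $E$ such that ${\bf f}(\vec e\,)\ge 0$ for every $\vec e\in\omega$, and set $m(e):={\bf f}(\vec e\,)$ with $S:=\supp{\bf f}$. The flow condition $\partial{\bf f}=0$ then reads $\sum_{\vec e\in\omega}[v,\vec e\,]\,m(e)=0$ at every vertex, which is the bi-directed analogue of Kirchhoff's law. I then distinguish two cases: if $S$ contains an outer-edge $e_0$, I start the walk at the outer-arc of $\vec e_0$; otherwise I start at any arc $\vec e_1\in\omega\cap\vec S$.

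At the current vertex $v$ of the partial walk, the greedy step selects any still-available arc $\vec e$ with $[v,\vec e\,]=-[v,\vec e_{\rm prev}]$. The identity $\sum[v,\vec e\,]\,m(e)=0$ guarantees that such a coherent continuation exists whenever we have just arrived at an internal vertex of the flow, because the residual arriving incidence at $v$ must be offset by a residual leaving one; the case of negative loops, where $[v,\vec e\,]=\pm 2$, requires a separate check. Since $\sum_e m(e)$ is finite, the walk must terminate. If we started at an outer-arc, termination can only occur at another outer-arc, yielding an open walk with two outer-arc endpoints. Otherwise termination must occur at the starting vertex $v_0$ with the final arc satisfying $[v_0,\vec e_k]=-[v_0,\vec e_1]$, and then the identity $[v_k,\vec e_k]=-\sigma(W)[v_0,\vec e_1]$ from the previous lemma forces $\sigma(W)=1$, so the closed walk is positive.

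Call the walk produced by this greedy phase $W_0$; then ${\bf I}_{\omega(W_0)}\le{\bf f}$ pointwise, and I would apply induction on $\sum_e m(e)$ to the residual flow ${\bf f}-{\bf I}_{\omega(W_0)}$ to obtain a walk $W'$ realizing it. The main obstacle is splicing $W_0$ and $W'$ into a single walk when their supports are disjoint, since a walk is one sequence and cannot break into two components. To resolve this I would exploit the connectedness of $\Sigma$: pick any edge-path $P$ in $\Sigma$ from a vertex of $W_0$ to a vertex of $W'$ and insert an ``out-and-back'' traversal of $P$ at the splicing point. Each bridging edge $p$ then appears twice with opposite arcs, so $\vec p+\vec p^{\,-1}=0$ contributes nothing to the 1-chain; since $[v,\vec p^{\,-1}]=-[v,\vec p\,]$, backtracking is automatically coherent at every interior vertex of $P$; and the signs along $P$ appear squared in $\sigma$, so the open or closed-positive type of the spliced walk is preserved. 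This closes the induction.
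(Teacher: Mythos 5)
Your greedy Eulerian phase is essentially sound: after making all values nonnegative by a choice of arcs, Kirchhoff's condition at each vertex does guarantee a coherent continuation, provided the bookkeeping is done by edge-\emph{ends} rather than by the incidence numbers themselves (a negative loop contributes two ends of the same sign, so the literal equation $[v,\vec{e}_i]=-[v,\vec{e}_{i+1}]$ can never hold between a link and a negative loop; this is the ``separate check'' you deferred, and it is handled by pairing an in-pointing end with an out-pointing end at each passage). Your termination analysis then correctly yields either an open walk or a closed positive walk. Note also that the paper itself gives no proof of this lemma --- it is quoted from the cited reference --- so your attempt can only be judged on its own terms.

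The genuine gap is exactly at the step you yourself call ``the main obstacle'': splicing. Your out-and-back insertion along a path $P$ works only when the piece being inserted is a \emph{closed} walk, because a detour must return to the splicing point; an open walk cannot be inserted as a detour, since under this paper's definitions an outer-edge can occur only as the initial or terminal member of a walk (it has a single endpoint, and any formal ``out-and-back'' over an outer-edge is forced by coherence to use opposite arcs, contributing zero to the chain). Hence the support of ${\bf I}_{\omega(W)}$ for a single walk $W$ contains at most two outer-edges. So when your greedy phase produces an open walk $W_0$ and the residual flow again requires an open walk $W'$, no splicing is possible. This is not a repairable defect of your argument but a counterexample to the statement as formulated in this paper: let $\Sigma$ be a single vertex $v$ with four positive outer-edges $e_1,e_2,e_3,e_4$, all arcs $\vec{e}_i$ pointing to $v$, and let ${\bf f}=\vec{e}_1+\vec{e}_2-\vec{e}_3-\vec{e}_4$. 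Then $\partial{\bf f}(v)=1+1-1-1=0$, so ${\bf f}$ is an integer flow of a connected signed graph, yet $\supp{\bf f}$ contains four outer-edges, so ${\bf f}\neq{\bf I}_{\omega(W)}$ for every single walk $W$. The lemma can therefore only hold under a different notion of walk than the one defined here (for instance, one allowed to leave by one outer-edge and re-enter by another, as if passing through a point at infinity) or under an extra hypothesis such as ${\bf f}$ being elementary; your proof, which is bound to this paper's definitions, cannot close this case --- and no argument could.
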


\begin{lem}\label{lem:negative-circle-flow}
Let $C$ be a negative circle with an orientation $\omega(C)$. Then every flow on $C$ with coefficients in $\Bbb A$ has the form $a\cdot{\bf 1}_{\omega(C)}$, where $a\in{\rm Tor}_2({\Bbb A})$ and ${\bf 1}_{\omega(C)}$ is the characteristic $1$-chain of $\omega(C)$, that is, ${\bf 1}_{\omega(C)}(\vec{e}\,)=1$ for $\vec{e}\in\omega(C)$ and ${\bf 1}_{\omega(C)}(\vec{e}\,)=0$ for $e\not\in C$.
\end{lem}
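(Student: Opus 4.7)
The plan is to exploit the essentially one-dimensional structure of a circle: a flow on $C$ is determined by its value on a single arc, and the condition $\partial{\bf f}=0$ at successive vertices forces all arc-values to agree, with the negativity of $C$ producing the single $2$-torsion obstruction at the closing vertex.

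First I would list the edges and vertices of $C$ in order along $\omega(C)$: write the underlying closed walk as $W=v_0 e_1 v_1 e_2\cdots v_{k-1}e_k v_k$ with $v_k=v_0$, where the arcs $\vec e_1,\ldots,\vec e_k\in\omega(C)$ are coherent, that is, $[v_i,\vec e_i]=-[v_i,\vec e_{i+1}]$ for every internal index $1\le i\le k-1$. Any flow $\bf f$ supported on $C$ can be written as ${\bf f}=\sum_{i=1}^k a_i\vec e_i$ with $a_i={\bf f}(\vec e_i)\in{\Bbb A}$, and our goal reduces to showing that all $a_i$ coincide and that the common value lies in ${\rm Tor}_2({\Bbb A})$.

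Next I would evaluate $\partial{\bf f}$ at each internal vertex $v_i$. Only $e_i$ and $e_{i+1}$ contribute, so
\[
0=(\partial{\bf f})(v_i)=[v_i,\vec e_i]a_i+[v_i,\vec e_{i+1}]a_{i+1}=[v_i,\vec e_i](a_i-a_{i+1}),
\]
and since $k\ge 2$ forces $e_i$ to be a non-loop at $v_i$ (so $[v_i,\vec e_i]=\pm1$), we conclude $a_1=a_2=\cdots=a_k=:a$. Hence ${\bf f}=a\cdot{\bf 1}_{\omega(C)}$ as claimed.

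It remains to pin down the constraint at the closing vertex $v_0=v_k$. Exactly as in the computation in the proof of the preceding lemma, the coherence of $\omega(C)$ together with $\sigma(C)=-1$ yields $[v_k,\vec e_k]=-\sigma(C)[v_0,\vec e_1]=[v_0,\vec e_1]$, whence
\[
0=(\partial{\bf f})(v_0)=[v_0,\vec e_1]a+[v_k,\vec e_k]a=2[v_0,\vec e_1]a.
\]
Because $[v_0,\vec e_1]=\pm1$ when $k\ge 2$, this forces $2a=0$, i.e.\ $a\in{\rm Tor}_2({\Bbb A})$. The degenerate case $k=1$ (the negative loop) must be treated separately but is even simpler: there $\partial(a\vec e_1)=[v_0,\vec e_1]\,a\,v_0=\pm 2a\,v_0$ by \eqref{eq:incidence-function}, giving $2a=0$ directly. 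The only real subtlety is bookkeeping in the loop/digon cases to make sure the $[v_k,\vec e_k]=[v_0,\vec e_1]$ relation and the coefficient $2$ are applied correctly; once these are settled the argument is a one-line traversal of $C$.
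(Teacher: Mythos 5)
Your core computation follows the same route as the paper's own proof: traverse the circle, use $\partial{\bf f}=0$ at each internal vertex to relate consecutive arc-values, and use the sign relation $[v_k,\vec{e}_k]=-\sigma(C)[v_0,\vec{e}_1]$ at the closing vertex to extract $2a=0$, with the negative loop handled separately. However, there is one genuine gap: you assume that the arcs of the \emph{given} orientation $\omega(C)$ can be listed coherently along the circle, i.e.\ that $\omega(C)$ is a direction of $C$. In this paper an orientation of an edge set is an arbitrary choice of one arc per edge, so a $k$-edge circle has $2^k$ orientations and only very few of them are coherent; the lemma is stated, and later used (e.g.\ in Theorem~\ref{thm:flow-group-structure}, where $\omega(C_{neg})$ is chosen arbitrarily), for an arbitrary $\omega(C)$. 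For a non-coherent $\omega(C)$ your opening setup is impossible, and the computation genuinely changes: at a vertex $v_i$ where $\omega(C)$ is incoherent one gets $[v_i,\vec{e}_i]a_i+[v_i,\vec{e}_{i+1}]a_{i+1}=\pm(a_i+a_{i+1})=0$, i.e.\ $a_{i+1}=-a_i$ rather than $a_{i+1}=a_i$, so the traversal alone does not force all values to be equal. (A triangle with all three arcs chosen to point toward, say, their clockwise endpoint gives $a_2=-a_1$, $a_3=-a_2$, and equality of all values only follows \emph{after} one knows $2a_1=0$.) As written, your argument therefore proves the statement only for the coherent orientations of $C$.

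The missing step is exactly the short second paragraph of the paper's proof. Run your argument with an auxiliary coherent direction $\omega(W)$ of the underlying closed walk, obtaining ${\bf f}=a\cdot{\bf 1}_{\omega(W)}$ with $a\in{\rm Tor}_2({\Bbb A})$; then for any arc $\vec{e}\in\omega(C)$, either $\vec{e}\in\omega(W)$, in which case ${\bf f}(\vec{e}\,)=a$, or $\vec{e}^{\;-1}\in\omega(W)$, in which case ${\bf f}(\vec{e}\,)=-a=a$ precisely because $2a=0$. Hence ${\bf f}=a\cdot{\bf 1}_{\omega(C)}$ for the given, arbitrary, orientation. This one line is what makes the statement orientation-independent, and it is the only thing separating your proposal from the paper's proof.
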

\begin{proof}
Let us arrange the vertices and edges of $C$ as a walk $W=v_0e_1v_1e_2\cdots e_kv_k$, where $v_0=v_k$, and choose a direction $\omega(W)$ of $W$, that is, $[v_i,\vec{e}_i]=-[v_i,\vec{e}_{i+1}]$ for $\vec{e}_i\in\omega(W)$, $1\leq i\leq k-1$. Then $[v_0,\vec{e}_1]=[v_k,\vec{e}_k]$, for $C$ is a negative circle. Given a flow ${\bf c}$ on $C$ valued in $\Bbb A$. By definition of flow,
${\bf c}(\vec{e}_i)={\bf c}(\vec{e}_{i+1})$, $1\leq i\leq k-1$, and
${\bf c}(\vec{e}_1)+{\bf c}(\vec{e}_k)=0$. Set $a={\bf c}(\vec{e}_i)$; then $2a=0$, that is, $a\in{\rm Tor}_2({\Bbb A})$. We obtain ${\bf f}=a\cdot{\bf 1}_{\omega(W)}$.

Now for the orientation $\omega(C)$, we have ${\bf f}(\vec{e\,})=a$ for $\vec{e}\in\omega(C)\cap\omega(W)$, and ${\bf f}(\vec{e}\,)=-a=a$ for $\vec{e}\in\omega(C)\cap\omega^{-1}(W)$. It follows that ${\bf f}=a\cdot{\bf 1}_{\omega(C)}$, where $a\in{\rm Tor}_2({\Bbb A})$. It is trivial to check that the 1-chain $a\cdot{\bf 1}_{\omega(C)}$, with $a\in{\rm Tor}_2({\Bbb A})$, is indeed a flow.
\end{proof}

\section{Frame matroid of signed graph}

Let ${\rm V}$ be a nonzero subspace of the vector space ${\rm C}_1(\Sigma,{\Bbb R})$. The {\em support} of a nonzero chain ${\bf c}$ of ${\rm C}_1(\Sigma,{\Bbb R})$ is the edge subset
\[
\supp{\bf c}=\big\{e\in E: {\bf c}(\vec{e}\,)\neq 0\big\}.
\]
A nonzero chain $\bf c$ of $\rm V$ is called {\em elementary} provided that the support of $\bf c$ is minimal in the sense that, if ${\bf c}'$ is a nonzero chain of ${\rm V}$ such that $\supp{\bf c}'\subseteq\supp{\bf c}$ then
$\supp{\bf c}'=\supp{\bf c}$. It is easy to see that any two elementary chains ${\bf c},{\bf c}'$ having the same support are proportional each other, that is, ${\bf c}'=\lambda{\bf c}$ for a nonzero real number $\lambda$. Let $\mathscr{D}({\rm V})$ denote the collection of supports of nonzero chains of $\rm V$, which is a poset under set inclusion. Elementary chains of $\rm V$ are those chains whose supports are minimal members of the poset $\mathscr{D}({\rm V})$.

If ${\rm V}$ is a rational subspace, that is, ${\rm V}$ has a basis of vectors whose coordinates are rational numbers, let ${\rm L(V)}$ denote the sub-lattice ${\rm V}\cap{\rm C}_1(\Sigma,{\Bbb Z})$. A nonzero vector ${\bf u}$ of ${\rm L(V)}$ is called {\em principal} provided that if ${\bf u}=c\,{\bf v}$ for a vector ${\bf v}\in{\rm L(V)}$ and an integer $c$ then $c=\pm1$. The following Theorem~\ref{Tutte-chain-matroid} is an important observation due to Tutte, who mentioned it twice without a formal proof, for it seems so simple and is supposed to be known (but in  fact not well known) in the literature.

\begin{thm}[Tutte~\cite{Tutte-Homotopy Theorem, Tutte-Matroid-Theory}]\label{Tutte-chain-matroid}
Let $\rm V$ be a nonzero vector subspace of ${\rm C}_1(\Sigma,{\Bbb R})$. Then $\mathscr{C}({\rm V})$, the class of minimal members of the poset
$\mathscr{D}({\rm V})$, forms a circuit system of a matroid $\mathcal{M}({\rm V})$ on the edge set $E$ of $\Sigma$, called the {\em frame matroid} of $\rm V$.
\end{thm}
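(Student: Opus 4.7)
The plan is to verify the three standard circuit axioms for the family $\mathscr{C}({\rm V})$: nonemptiness of members, incomparability, and weak circuit elimination. The first two are essentially built into the definition: any member of $\mathscr{C}({\rm V})$ is the support of a nonzero chain, hence is nonempty; and no two distinct minimal members of the poset $\mathscr{D}({\rm V})$ can be comparable under inclusion. So the substantive work lies in the elimination axiom.

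Before attacking elimination, I would isolate a preliminary lemma already hinted at in the paragraph preceding the theorem: any two elementary chains ${\bf c},{\bf c}'\in{\rm V}$ with the same support are proportional over $\Bbb R$. This is routine but essential. Pick any $e_0\in\supp{\bf c}=\supp{\bf c}'$, set $\lambda={\bf c}'(\vec{e}_0)/{\bf c}(\vec{e}_0)$, and consider ${\bf c}'-\lambda{\bf c}\in{\rm V}$. Its support is a proper subset of $\supp{\bf c}$, so by minimality of the support of ${\bf c}$, it must be empty, forcing ${\bf c}'=\lambda{\bf c}$.

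For weak circuit elimination, suppose $C_1,C_2\in\mathscr{C}({\rm V})$ are distinct and $e\in C_1\cap C_2$. Choose elementary chains ${\bf c}_1,{\bf c}_2\in{\rm V}$ with $\supp{\bf c}_i=C_i$, and fix an orientation $\vec{e}$ of $e$. Both ${\bf c}_i(\vec{e})$ are nonzero, so I form
\[
{\bf c}\;:=\;{\bf c}_2(\vec{e})\,{\bf c}_1 \;-\; {\bf c}_1(\vec{e})\,{\bf c}_2 \;\in\;{\rm V}.
\]
By construction ${\bf c}(\vec{e})=0$, and $\supp{\bf c}\subseteq(C_1\cup C_2)\setminus\{e\}$. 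The crucial point is that ${\bf c}\neq 0$: otherwise ${\bf c}_1$ and ${\bf c}_2$ would be proportional, which by the preliminary lemma (applied in reverse: proportional nonzero chains share the same support) would force $C_1=C_2$, contradicting our hypothesis. Thus $\supp{\bf c}\in\mathscr{D}({\rm V})$, and since $E$ is finite the poset $\mathscr{D}({\rm V})$ has a minimal element below $\supp{\bf c}$; that minimal element is the required circuit $C_3\in\mathscr{C}({\rm V})$ contained in $(C_1\cup C_2)\setminus\{e\}$.

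I expect the main obstacle to be purely expository: getting the proportionality lemma and the non-vanishing of ${\bf c}$ stated cleanly, so that the elimination step reads as a one-line linear-algebra maneuver rather than a case analysis. Once the proportionality lemma is in hand, the verification of the three circuit axioms is uniform and no signed-graph-specific input (such as orientations, loops, or outer-edges) is needed; the theorem is really a statement about arbitrary real subspaces of a coordinate space, with the signed-graph structure entering only later when we specialize ${\rm V}$ to the flow space ${\rm F}(\Sigma,{\Bbb R})$.
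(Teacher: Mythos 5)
Your proposal is correct and takes essentially the same approach as the paper: both prove the elimination axiom by forming the linear combination of two elementary chains that annihilates the common edge $e$, observing that this combination is nonzero, and then taking a minimal member of $\mathscr{D}({\rm V})$ below its support. The only differences are cosmetic --- you verify the trivial axioms explicitly and argue non-vanishing by contradiction via the proportionality lemma, whereas the paper exhibits a nonzero coordinate directly at an edge of $C_2\setminus C_1$.
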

\begin{proof}
Given two distinct members $C_1,C_2$ of $\mathscr{C}({\rm V})$ with a common edge $e$. Let ${\bf c}_i$ be elementary chains of ${\rm V}$ such that $C_i=\supp{\bf c}_i$, $i=1,2$. Let $\lambda={\bf c}_2(\vec{e}\,)/{\bf c}_1(\vec{e}\,)$. Then $\lambda\neq 0$ and ${\bf c}:={\bf c}_2-\lambda{\bf c}_1$ belongs to $\rm V$ and vanishes on $e$. Consequently, $\supp{\bf c}\subseteq(C_1\cup C_2)\setminus e$. Since $C_1$ and $C_2$ are distinct and minimal as edge subsets, there exist edges $e_1\in C_1\smallsetminus C_2$ and $e_2\in C_2\setminus C_1$. So ${\bf c}$ is nonzero; consequently, $\supp{\bf c}$ is nonempty and $\supp{\bf c}\in\mathscr{D}({\rm V})$. Thus there exists a minimal member $C$ in $\mathscr{D}({\rm V})$ such that $C\subseteq\supp{\bf c}$. We have obtained a member $C$ of $\mathscr{C}({\rm V})$ such that $C\subseteq (C_1\cup C_2)\setminus e$. The circuit axiom of matroid is satisfied.
\end{proof}

The vector space ${\rm C}_1(\Sigma,{\Bbb R})$ is equipped with an obvious inner product
\begin{equation}\label{Inner-product}
\langle{\bf c},{\bf d}\rangle:=\sum_{e\in E}{\bf c}(\vec{e}\,){\bf d}(\vec{e}\,).
\end{equation}
Let ${\rm V}^\perp$ denote the orthogonal complement of the vector space $\rm V$. Choose an orientation $\omega$ of $\Sigma$ and a vector basis ${\bf b}_1,\ldots,{\bf b}_m$ of $\rm V$ to form a base matrix $\bf B$, whose rows are the vectors ${\bf b}_i(\vec{e\,})$, $1\leq i\leq m=\dim{\rm V}$. Then the kernel of $\bf B$ is ${\rm V}^\perp$. The matroid $\mathcal{M}({\rm V})$ has the class $\mathscr{C}({\rm V})$ of circuits and the class $\mathscr{I}({\rm V})$ of independent sets (= columns of $\bf B$ corresponding to members of an independent edge set are linearly independent).

Reduce the matrix $\bf B$ by row operations to the form $[{\bf I}_m\; {\bf D}]$, where ${\bf I}_m$ is the identity matrix. Then the rows of $[-{\bf D}^T\;{\bf I}_n]$ form a basis of ${\rm V}^\perp$, where $n=\dim{\rm V}^\perp$. The matroid  $\mathcal{M}({\rm V}^\perp)$ has the class $\mathscr{C}({\rm V}^\perp)$ of circuits and the class $\mathscr{I}({\rm V}^\perp)$ of independent sets. Notice that the matroid $(E,\mathscr{I}({\rm V}^\perp))$ is dual to the matroid of $(E,\mathscr{I}({\rm V}))$, according to matroid duality. We obtain the following proposition as desired.

\begin{prop}\label{Prop:Duality-and-othogonality}
$\mathcal{M}^*({\rm V})=\mathcal{M}({\rm V}^\perp)$.
\end{prop}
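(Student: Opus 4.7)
The plan is to reduce the claim to the classical correspondence between matroid duality and orthogonal complement of representing matrices, exactly as foreshadowed in the paragraph preceding the proposition. First I would fix an orientation $\omega$ of $\Sigma$ to identify ${\rm C}_1(\Sigma,{\Bbb R})$ with ${\Bbb R}^E$, and form a basis matrix ${\bf B}$ of ${\rm V}$ whose rows are the chosen basis vectors. After row operations and a harmless relabeling of edges, we may assume ${\bf B} = [{\bf I}_m\ {\bf D}]$; then ${\bf B}' := [-{\bf D}^T\ {\bf I}_n]$ has rows forming a basis of ${\rm V}^\perp$, since they are orthogonal to those of ${\bf B}$ and there are $n = |E| - m = \dim {\rm V}^\perp$ of them.

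Next I would carefully reconcile two conventions, which is the step that actually carries the content. For a matrix ${\bf A}$ with columns indexed by $E$, write $M({\bf A})$ for the standard vector matroid whose independent sets are subsets of columns that are linearly independent; its circuits are precisely the minimal supports of nonzero vectors in $\ker{\bf A}$ (read as column dependencies). Since $\ker{\bf B} = {\rm V}^\perp$ and $\ker{\bf B}' = {\rm V}$, this yields the identifications
\[
M({\bf B}) = \mathcal{M}({\rm V}^\perp), \qquad M({\bf B}') = \mathcal{M}({\rm V}).
\]
It is worth emphasizing that although ${\bf B}$ is built from a basis of ${\rm V}$ itself, the associated vector matroid $M({\bf B})$ represents the \emph{orthogonal} space ${\rm V}^\perp$, not ${\rm V}$; this is the point where one is most likely to slip.

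Finally, I would invoke the classical linear-algebra lemma that the dual of the vector matroid of a matrix in standard form $[{\bf I}_m\ {\bf D}]$ is the vector matroid of $[-{\bf D}^T\ {\bf I}_n]$, i.e.\ $M({\bf B})^* = M({\bf B}')$. Combined with the identifications above,
\[
\mathcal{M}^*({\rm V}) = M({\bf B}')^* = M({\bf B}) = \mathcal{M}({\rm V}^\perp),
\]
which is the desired equality. The only real obstacle is the conventional bookkeeping in the middle paragraph; once the matching $M({\bf B}) = \mathcal{M}({\rm V}^\perp)$ is correctly established, the proposition follows directly from the standard linear-matroid-duality theorem.
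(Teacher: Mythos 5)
Your proof is correct and takes essentially the same route as the paper: choose a basis matrix ${\bf B}$ of ${\rm V}$, reduce it to $[{\bf I}_m\;{\bf D}]$, pass to ${\bf B}'=[-{\bf D}^T\;{\bf I}_n]$, and invoke the standard duality between the two column (vector) matroids. One remark: your middle paragraph is in fact more careful than the paper's own text, which identifies the linearly independent column sets of ${\bf B}$ with the independent sets of $\mathcal{M}({\rm V})$ rather than of $\mathcal{M}({\rm V}^\perp)$ --- a swap that happens to cancel in the paper because it is made for both ${\bf B}$ and ${\bf B}'$; your identifications $M({\bf B})=\mathcal{M}({\rm V}^\perp)$ and $M({\bf B}')=\mathcal{M}({\rm V})$, via kernels rather than row spaces, are the correct ones.
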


The circuits of the matroid $\mathcal{M}({\rm V}^\perp)$ are the co-circuits of the matroid $\mathcal{M}({\rm V})$, and vice versa. We are only interested in two subspaces of ${\rm C}_1(\Sigma,{\Bbb R})$, the flow space ${\rm F}(\Sigma,{\Bbb R})$ and the tension space ${\rm T}(\Sigma,{\Bbb R})$, which is the orthogonal complement of ${\rm F}(\Sigma,{\Bbb R})$. The circuit system of the frame matroid $\mathcal{M}({\rm F})$, where ${\rm F}={\rm F}(\Sigma,{\Bbb R})$, was characterized partly by Bouchet \cite[Corolarry 2.3]{Bouchet} for signed graph without outer-edges, and fully by Chen \cite{Chen-GC} for signed graph with outer-edges, and was confirmed expectantly to be the same circuit system introduced by Zaslavsky \cite{Zaslavsky-signed-graphs-DAM,
Zaslavsky-orientation-EJC} for the signed-graph $\Sigma$. We are motivated to redefine circuits for signed graphs as follows.

\begin{defn}[Signed-graph circuit and its direction]
A {\em signed-graph circuit} or just {\em circuit} is a connected signed graph $C$ whose edge set is nonempty and minimal, permitting a nowhere-zero flow ${\bf c}$. The orientation $\omega(C)=\{\vec{e}:{\bf c}(\vec{e}\,)>0\}$ is called a {\em direction} of $C$. The {\em frame matroid} $\mathcal{M}(\Sigma)$ of a signed graph $\Sigma$ is defined by the circuits of $\Sigma$.
\end{defn}

The minimality of the edge set of a circuit $C$ implies that nowhere-zero integer flows on $C$ are unique up to a constant multiple. Then there exist exactly two opposite directions on $C$. Equivalently, circuit $C$ is a signed graph whose edge set is nonempty and minimal to permit an orientation $\omega_C$ such that the signed digraph $(C,\omega_C)$ contains neither sink nor source; this is exactly the old definition of signed-graph circuit, adopted previously by Chen~\cite[Definition 2.1]{Chen-GC}. The following characterization of signed-graph circuits was first described by Zaslavsky in words (see \cite[p.~54]{Zaslavsky-signed-graphs-DAM}) and in figures (see \cite[p.~365]{Zaslavsky-orientation-EJC}). The circuit types (C2), (C3), (C5) of Theorem \ref{thm:circuits} was characterized by Bouchet \cite[Corollary 2.3]{Bouchet} for signed graphs without outer-edges by supports of elementary chains. Like Tutte \cite{Tutte-class-of-abelian-group}, Bouchet chose an orientation to define chains, not intrinsically like the standard treatment in algebraic topology. For signed graphs with possible outer-edges, supports of elementary flows are characterized into five types (C1)--(C5) in Theorem \ref{thm:circuits} by Chen~\cite[Theorem~2.2]{Chen-GC}.

\begin{thm}[Characterization of signed-graph circuits, Zaslavsky~\cite{Zaslavsky-signed-graphs-DAM}]\label{thm:circuits}
The circuits of the frame matroid $\mathcal{M}(\Sigma)$ are classified into the following five topological patterns:
\begin{enumerate}[\hspace{5mm}\rm (C1)]
\item
Open line $L$.

\item
Positive circle $C$.

\item
Union of two edges disjoint negative circles $C_1$ and $C_2$, meeting at a unique common vertex, written $C=C_1C_2$ {\rm (two contacted negative circles)}.

\item
Union of a negative circle $C_1$ and a half-line $H$
(called {\em double half-line}), meeting uniquely at the endpoint of $H$, written $C=C_1H$ {\rm (negative circle with half-line)}.

\item
Union of two disjoint negative circles $C_1,C_2$ and
a line segment $P$ (called {\em double line segment}), which meets $C_1$ uniquely at its one endpoint and meets $C_2$ uniquely at the other endpoint, written $C_1PC_2$ {\rm (two negative circles connected by line segment)}.
\end{enumerate}
\begin{figure}[h]
\centering
{\includegraphics[height=12mm]{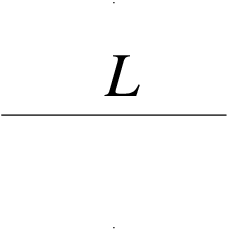}}
\hspace{5mm} {\includegraphics[height=12mm]{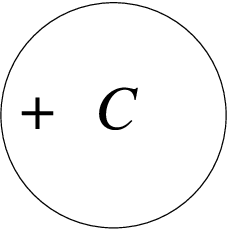}}
\hspace{5mm} {\includegraphics[height=12mm]{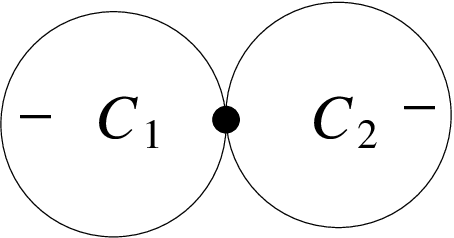}}
\hspace{5mm} {\includegraphics[height=12mm]{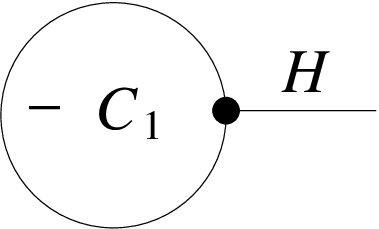}}
\hspace{5mm} {\includegraphics[height=12mm]{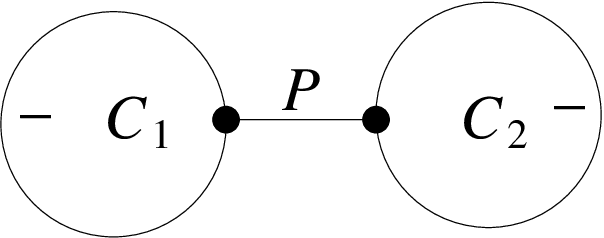}}
\caption{Five topological patterns of signed-graph circuits} \label{Fig:Circuit-patterns}
\end{figure}
\end{thm}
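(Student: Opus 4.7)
The plan is to combine the Chen walk representation lemma (the unlabelled lemma immediately preceding Lemma~\ref{lem:negative-circle-flow}) with the minimality built into the definition of a circuit in order to establish both directions: each of the five topological patterns $(C1)$--$(C5)$ is a circuit, and conversely every circuit is one of these five patterns.

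For the ``easy'' direction I would construct an explicit nowhere-zero integer flow on each pattern. For $(C1)$ and $(C2)$ a coherent orientation of the open line or the positive circle makes the characteristic $1$-chain a nowhere-zero flow. For $(C3)$--$(C5)$ the flow is traced out by a single directed walk that traverses each negative circle exactly once, exploiting the sign flip around a negative circle, and each connecting half-line or line segment exactly twice; the resulting integer chain takes values $\pm 1$ on circle edges and $\pm 2$ on doubled pieces, and is realized as a closed positive walk in $(C3)$ and $(C5)$ and as an open walk with coincident initial and terminal outer-edge in $(C4)$. Minimality is immediate in each case, because deleting any edge creates a pendant non-outer-edge at which flow conservation forces a neighbouring edge to carry zero flow.

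For the converse, let $C$ be a circuit, pick a nowhere-zero real flow on $C$, and clear denominators to obtain a nowhere-zero integer flow ${\bf f}$. Connectedness of $C$ permits Chen's walk lemma, producing a single directed walk $W$ -- either a closed positive walk or an open walk with outer-arcs at both ends -- satisfying ${\bf f} = {\bf I}_{\omega(W)}$. If $W$ is open with distinct initial and terminal outer-edges, minimality forces $W$ to be simple, giving $(C1)$. If $W$ is open but its initial and terminal outer-arcs share a common outer-edge $e^*$, then $W$ factors as $e^* \cdot v \cdot W' \cdot e^*$ for a closed sub-walk $W'$ at the endpoint $v$ of $e^*$; positivity of $W'$ would leave ${\bf I}_{\omega(W')}$ as a nowhere-zero flow on $E(W') \subsetneq E(C)$, contradicting minimality, so $W'$ is negative, and further minimality forces $W'$ to be a simple negative circle with a simple half-line joining $v$ to it, giving $(C4)$. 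If $W$ is a simple closed positive walk we obtain $(C2)$; otherwise splitting $W$ at a repeated vertex $u$ into closed sub-walks $W_1, W_2$ with $\sigma(W_1)\sigma(W_2) = +1$ forces both to be negative (by minimality), and either both are simple negative circles sharing only $u$ (giving $(C3)$), or iterated splitting together with minimality reduces one of them to $P \cdot W_2' \cdot P^{-1}$ with $W_2'$ a simple negative circle edge-disjoint from $W_1$ and $P$ a simple path connecting them, giving $(C5)$.

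The main obstacle is the last step above: ruling out every other self-intersection pattern of a closed positive walk, such as three or more daisy-chained negative sub-circles, two negative circles sharing more than one vertex, or mixed configurations of circles with open lines or extra half-lines. I expect to handle this by induction on the number of repeated vertex visits of $W$: at each stage I either extract a proper closed positive sub-walk (contradicting minimality immediately) or show that the intersection pattern collapses topologically into one of $(C3)$ or $(C5)$. The detailed bookkeeping needed to enumerate these reductions is the crux of the argument.
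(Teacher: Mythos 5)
Your proposal targets a statement that this paper never actually proves: Theorem~\ref{thm:circuits} is imported from the literature (Zaslavsky for the statement, Bouchet~\cite{Bouchet} for types (C2), (C3), (C5) in the compact case, and Chen~\cite[Theorem~2.2]{Chen-GC} for signed graphs with outer-edges), and the text surrounding the theorem only explains this provenance. So there is no in-paper argument to compare against; what you propose is a re-proof of the cited result, and your strategy --- represent a minimal nowhere-zero integer flow by a single directed walk via the unlabelled lemma of Chen quoted just before Lemma~\ref{lem:negative-circle-flow}, then analyse the self-intersection pattern of that walk using minimality --- is essentially the strategy of that cited work rather than a genuinely different route.

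Judged as a proof, however, it has a real gap, and you name it yourself: the entire content of the theorem is the claim that a minimal closed directed positive walk can only collapse to (C2), (C3) or (C5) (and an open one to (C1) or (C4)), and you defer exactly this step to ``detailed bookkeeping'' and an unspecified ``induction on the number of repeated vertex visits'' that is never set up. Worse, the shortcut you lean on throughout --- ``a proper closed positive sub-walk $W'$ yields a nowhere-zero flow on $E(W')\subsetneq E(C)$, contradicting minimality'' --- is false as stated, for two reasons. First, ${\bf I}_{\omega(W')}$ need not be nowhere-zero on $E(W')$ and can even vanish identically, since a sub-walk may traverse an edge once in each direction so that the contributions cancel; second, $E(W')$ need not be a proper subset of $E(C)$ (the sub-walk can already cover every edge, and outer-edge ``bounces'' in the interior of $W'$ can reinstate the end edge $e^*$). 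The correct use of minimality is to pass to a connected component of $\supp{\bf I}_{\omega(W')}$, or, when that chain vanishes, to excise $W'$ and induct on walk length; this is precisely the machinery your sketch presupposes but does not construct. Two smaller repairs are also needed: in the ``easy'' direction, pendant-edge stripping alone does not prove minimality --- deleting an edge of $C_1$ in pattern (C3) and stripping pendants leaves the intact negative circle $C_2$, where you must invoke Lemma~\ref{lem:negative-circle-flow} together with ${\rm Tor}_2({\Bbb R})=0$ to force the flow to vanish --- and ``clear denominators'' presupposes a rational nowhere-zero flow, which requires either density of rational points in the (rational) flow space or the fact that elementary chains with the same support are proportional.
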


Each circuit $C$ of types (C1)--(C5) can be viewed as an edge multiset $(C,m)$ with multiplicity function $m$, having value 2 at each double edge and value 1 at each single edge of $C$. Given a direction $\omega(C)$ of a circuit $C$; the 1-chain
\begin{align*}
{\bf I}_{\omega(C)}:&= \sum_{\vec{e}\,\in\omega(C)}m(e)\vec{e}
\end{align*}
is an integer-valued principal flow of $\Sigma$, called a {\em circular flow} of $\Sigma$. More specifically, for circuits $C$ of patterns (C1)--(C3), we have ${\bf I}_{\omega(C)}={\bf 1}_{\omega(C)}$, the characteristic chain of $\omega(C)$. For circuits $C$ of patterns (C1)--(C5), we have
\begin{equation}\label{eq:circuit flow}
{\bf I}_{\omega(C)}(\vec{e}\,)=\left\{\begin{array}{ll}
1 & \mbox{if\, $\vec{e}\in\omega(C)$, $e$ is a single edge,}\\
2 & \mbox{if\, $\vec{e}\in\omega(C)$, $e$ is a double edge,}\\
0 & \mbox{otherwise.}
\end{array}\right.
\end{equation}
We shall see that the flow lattice ${\rm F}(\Sigma,{\Bbb Z})$ is generated by circuit flows of $\Sigma$.

The {\em rank} and the {\em dual rank} of the frame matroid $\mathcal{M}(\Sigma)$ or $\Sigma$ are defined, respectively, as
\begin{equation}
r(\Sigma):=|V|-b(\Sigma), \quad r^*(\Sigma):=|E|-r(\Sigma).
\end{equation}
The {\em rank function} $r$ and the {\em cycle-rank function} ${\rm cr}$ of $\mathcal{M}(\Sigma)$ are the functions of the power set $\mathcal{P}(E)$ to ${\Bbb Z}_{\geq 0}$, defined for each $S\in\mathcal{P}(E)$ by
\begin{equation}\label{eq:rank and cycle-rank}
r(S):=r(\Sigma|S),\quad {\rm cr}(S):=r^*(\Sigma|S),
\end{equation}
where $\Sigma|S$ is the spanning signed subgraph of $\Sigma$ induced by the edge subset $S$, whose sign function $\sigma|_S$ is the restriction of $\sigma$ to $S$. More explicitly, each $S\subseteq E$ edge subset has its rank and cycle-rank given by
\begin{align}
r(S)&=|V|-b(S),\\
{\rm cr}(S)&=|S|-r(S)=r^*(E)-r^*(E\smallsetminus S),\label{eq:cycle-rank-formula}
\end{align}
where $r^*$ is the rank function of the matroid $\mathcal{M}^*(\Sigma)$, the dual matroid of $\mathcal{M}(\Sigma)$, defined for each $S\in\mathcal{P}(E)$ as the rank of the sub-matroid $\mathcal{M}^*(\Sigma)|S$, the restriction of $\mathcal{M}^*(\Sigma)$ to $S$.

An edge subset $S$ of $\Sigma$ is called {\em dependent} if $\Sigma|S$ contains circuits; otherwise, $S$ is called {\em independent}. A maximal independent set is called a {\em base} or {\em spanning forest} of $\Sigma$. The rank $r(S)$ equals the number of edges of a maximal independent set contained in $S$, and the cycle-rank ${\rm cr}(S)$ equals the number of edges of a maximal independent set of the matroid $\mathcal{M}^*(\Sigma|S)$ dual to $\mathcal{M}(\Sigma|S)$.\vspace{1ex}

\noindent
{\bf Warning.}  There is a danger of ambiguity in terminology between ``loops" of a signed graph $\Sigma$ and the ``loops" of its frame matroid $\mathcal{M}(\Sigma)$. They are not exactly same in our usage. Loops of $\Sigma$ are single edges whose two endpoints are identical; while loops of $\mathcal{M}(\Sigma)$ are positive loops of $\Sigma$. Negative loops of the signed graph $\Sigma$ are {\em not} loops of the frame matroid $\mathcal{M}(\Sigma)$. The cycle-rank function is {\em not} the rank function of the matroid $\mathcal{M}^*(\Sigma)$ dual to $\mathcal{M}(\Sigma)$.

\section{Characterization of signed-graph cuts and bonds}

The notion of ``cut" for signed graphs was missing until its first characterization by Chen and Wang \cite[Proposition 2.1]{Chen-Wang-EJC} for signed graph without outer-edges. However, at that time, the authors didn't understand yet that circuits come from the minimal supports of nonzero flows, and that bonds come from the minimal supports of nonzero tensions. So naturally, cuts should, and actually do, come from certain tensions.

Let $X$ be a nonempty vertex subset of $\Sigma$. Denote by $X^c$ the complement $V\setminus X$, and by $O_X$ the set of outer-edges with endpoints in $X$. We adopt common graph notations
\begin{align}
\Sigma[X]:&=\mbox{signed subgraph with vertex set $X$, consiting of }\nonumber\\
&\hspace{6mm}\mbox{loops and link edges of $\Sigma$ having endpoins in $X$,}\\
[X,X^c]:&=\mbox{set of link edges between $X$ and $X^c$,}
\end{align}
where $\Sigma[X]$ is called the {\em compact sign subgraph} induced by $X$. We further define {\em non-compact signed subgraphs} (induced by $X$)
\begin{align}
\Sigma[X):&=\Sigma[X]\cup O_X,\\
\Sigma(X):&=\Sigma[X]\cup[X,X^c]\cup O_X.
\end{align}

Let ${\bf 1}_X$ denote the characteristic (or indicator) function of $X$, that is, ${\bf 1}_X(v)=1$ for $v\in X$ and ${\bf 1}_X(v)=0$ for $v\in X^c$. The function ${\bf 1}_X$ can be viewed as a $0$-chain in every signed graph $\Sigma^\nu$, where $\nu$ is any switching on $X$, that is, $\nu(v)=1$ for all $v\in X^c$. In order to specify that ${\bf 1}_X$ is a 0-chain of $\Sigma^\nu$, we may write ${\bf 1}_X$ in the form ${\bf 1}_X(\Sigma^\nu)$.
By Lemma~\ref{Lem:Potential-is-tension}, its co-boundary $\delta{\bf 1}_X(\Sigma^\nu)$ is a tension of $\Sigma^\nu$. Let $\delta X(\Sigma^\nu)$ denote the support of $\delta{\bf 1}_X$ in $\Sigma^\nu$, that is,
\[
\delta X(\Sigma^\nu)=[X,X^c]^\nu\cup E_X^{\,\nu}\cup O_X^{\,\nu},
\]
where $E_X^\nu$ is the set of negative edges of $\Sigma^\nu[X]$. Transforming the edge subset $\delta X(\Sigma^\nu)$ of $\Sigma^\nu$ back to an edge subset of $\Sigma$ by the same $\nu$, denoted $\delta X^\nu$, we obtain and define
\begin{align}\label{defn:eq-DXV}
\delta X^\nu:
&=[X,X^c]\cup E_X\cup O_X.
\end{align}
The notation $\delta X^\nu$ is justified by the fact that $\delta X$ is traditionally used for cut in graph theory, and that the switching $\nu$ is unique up to sign in the sense of Theorem \ref{thm:uni-cut}(a). Moreover, the edge subset $\delta X^\nu$ is the support of the tension $\delta(\nu\cdot{\bf 1}_X)$ in
${\rm T}(\Sigma,{\Bbb Z})$.

\begin{defn}[Signed-graph cut and its direction]\label{defn:cut}
A {\em signed-graph cut} or just {\em cut} of a signed graph $\Sigma$ is a nonempty edge subset $U$ of the form $\delta X^\nu$, where $\nu$ is a switching on a nonempty vertex subset $X$, such that $\Sigma[X]\smallsetminus U$ is balanced, edges of $\Sigma^\nu[X]\cap U^\nu$ are negative, edges of $\Sigma^\nu[X]\smallsetminus U^\nu$ are positive, and $(\Sigma[X]\smallsetminus U)\cup e$ is unbalanced for each edge $e$ of $\Sigma[X]\cap U$.

A cut $U$ is a {\em uni-cut} if $\Sigma[X]$ is connected. A {\em bond} of $\Sigma$ is a cut $U$ that is minimal in the sense that there is no cut of $\Sigma$ contained properly in $U$ as edge subset.

A {\em direction} of a cut $\,U=\delta X^\nu$ is an orientation $\omega(U)$ on $U$ such that all arcs of $\omega^\nu(U)$ point to $X$ or all point away from $X$.
\end{defn}

\begin{thm}\label{thm:uni-cut}
Let $\Sigma$ be connected and unbalanced. Let $X$ be a nonempty vertex subset of $\Sigma$ such that $\Sigma[X]$ is connected.
\begin{enumerate}[\hspace{5mm}\rm (a)]
\item
There exists a switching $\nu$ on $X$ such that $U:=\delta X^\nu$ is a uni-cut of $\Sigma$. Moreover, $\Sigma[X]\setminus U$ is connected.

\item
If $\mu$ is a switching on $X$ such that $\delta X^\nu=\delta X^\mu$, then $\mu=\pm\nu$ on $X$; that is, $\nu$ is unique up to a sign on $X$.
\end{enumerate}
\end{thm}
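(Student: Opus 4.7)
The strategy is: for part (a), choose $\nu$ so that a spanning tree of $\Sigma[X]$ becomes entirely positive after switching; for part (b), propagate the edgewise equality $\delta X^\nu=\delta X^\mu$ across $\Sigma[X]$ using connectedness of $\Sigma[X]$.

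For (a), I would fix a spanning tree $T$ of the connected subgraph $\Sigma[X]$, pick a root $r\in X$, set $\nu(r):=1$, and extend $\nu$ along $T$ by the rule: for each tree edge $e=uv$ with $v$ the parent of $u$, declare $\nu(u):=\sigma(e)\nu(v)$, so that $\sigma^\nu(e)=\nu(u)\sigma(e)\nu(v)=1$; extend by $\nu\equiv 1$ on $X^c$. Put $U:=\delta X^\nu$. The two sign conditions of Definition~\ref{defn:cut} are then tautological from the defining property of $E_X^\nu$. Balance of $\Sigma[X]\setminus U$ follows because its switched version $\Sigma^\nu[X]\setminus E_X^\nu$ is all-positive, and balance is switching-invariant. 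For the minimality clause, each $e\in\Sigma[X]\cap U$ is negative in $\Sigma^\nu$; since $T\subseteq\Sigma^\nu[X]\setminus E_X^\nu$, that all-positive subgraph is connected, so either $e$ itself is a negative loop or there is a positive path between its endpoints, producing a negative circle upon adjunction. The same inclusion $T\subseteq\Sigma^\nu[X]\setminus E_X^\nu$ yields the ``moreover'' claim: $\Sigma[X]\setminus U$ contains a spanning tree of $\Sigma[X]$ and is therefore connected. Finally, nonemptiness of $U$ splits into cases: if $X\neq V$ then $[X,X^c]\neq\varnothing$ by connectedness of $\Sigma$; if $X=V$, the unbalanced hypothesis supplies either an outer-edge (so $O_X\neq\varnothing$) or, in the compact case, a negative circle forcing $E_X\neq\varnothing$ (as an all-positive $\Sigma^\nu[X]$ would be balanced).

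For (b), note that $[X,X^c]$ and $O_X$ are intrinsic to $X$, independent of the switching, so $\delta X^\nu=\delta X^\mu$ forces $E_X^\nu=E_X^\mu$. Reading this edge-by-edge gives $\sigma^\nu(e)=\sigma^\mu(e)$ for every link/loop $e=uv$ of $\Sigma[X]$, i.e., $\nu(u)\nu(v)=\mu(u)\mu(v)$. Setting $\tau(v):=\nu(v)\mu(v)\in\{\pm 1\}$, the relation becomes $\tau(u)\tau(v)=1$ on every edge of $\Sigma[X]$; since $\Sigma[X]$ is connected, $\tau$ must be constant on $X$, yielding $\mu=\pm\nu$ on $X$. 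The principal technical obstacle is in (a): the minimality clause depends on $\Sigma^\nu[X]\setminus E_X^\nu$ being connected, which is exactly what forces the spanning-tree construction above; any more ad hoc choice of $\nu$ risks leaving this subgraph disconnected and destroys the negative-circle argument underlying minimality.
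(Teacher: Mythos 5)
Your proof is correct and takes essentially the same route as the paper: the identical spanning-tree switching construction for (a) (the paper leaves the cut-axiom verifications as ``easy to see,'' which you fill in, including the nonemptiness check the paper omits), and for (b) your function $\tau=\nu\mu$, locally constant on edges of $\Sigma[X]$ and hence constant by connectedness, is exactly the streamlined form of the paper's four-case analysis on an edge joining the agreement set $X_1=\{v:\nu(v)=\mu(v)\}$ and the disagreement set $X_2=\{v:\nu(v)=-\mu(v)\}$. The one small divergence is the ``moreover'' claim: the paper derives connectedness of $\Sigma[X]\setminus U$ by contradiction from the minimality clause of the cut definition (so it holds for any uni-cut $\delta X^\nu$), whereas you read it off directly from $T\subseteq\Sigma[X]\setminus U$ for the constructed switching --- a simpler and equally valid argument for the statement as given.
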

\begin{proof}
(a) Choose a spanning tree $T$ of $\Sigma[X]$. It is trivial that there exists a switching on $X$ such that all edges of $T^\nu$ are positive. Let $E_X^\nu$ denote the set of negative edges of $\Sigma^\nu[X]$. It is easy to see that $U:=\delta X^\nu=[X,X^c]\cup E_X\cup O_X$ is a uni-cut of $\Sigma$.

Suppose $\Sigma[X]\smallsetminus U$ contains two or more components $\Sigma_i$. Since $\Sigma[X]$ is connected, there are two components $\Sigma_1,\Sigma_2$ connected by an edge $e_0$ of $\Sigma[X]\cap U$. Since $\Sigma[X]\smallsetminus U$ is balanced, so does each of its components. Clearly, the union $\Sigma_1\cup\Sigma_2\cup e_0$ is balanced. It follows that $(\Sigma[X]\smallsetminus U)\cup e_0$ is
a union of its balanced component $\Sigma_1\cup\Sigma_2\cup e_0$ and other balanced components; consequently, itself is balanced. However, by definition of cut, $(\Sigma[X]\smallsetminus U)\cup e$ is unbalanced for each member $e$ of $U$, contradictory to that $(\Sigma[X]\smallsetminus U)\cup e_0$ is balanced.

(b) By the choice of $\nu$, it is clear that $U=\supp \delta(\nu\cdot{\bm 1}_X)$. Let $\mu$ be any switching on $X$ such that $\delta X^\nu=\delta X^\mu$. Suppose $\nu\neq\pm\mu$. There exist two distinct vertices $u_1,u_2$ of $X$ such that $\nu(u_1)=\mu(u_1)$ and $\nu(u_2)=-\mu(u_2)$. Set
\[
X_1:=\{v\in X\:|\:\nu(v)=\mu(v)\}, \quad X_2:=\{v\in X\:|\:\nu(v)=-\mu(v)\}.
\]
Then both $X_1$ and $X_2$ are nonempty.

Since $\Sigma[X]$ is connected, there exists an edge $e_0$ connecting $\Sigma[X_1]$ and $\Sigma[X_2]$, with endpoints $v_1\in X_1$ and $v_2\in X_2$. There are four possible cases:
\begin{enumerate}[\hspace{5mm} (1)]
\item
$\nu(v_1)=\nu(v_2)$, $\mu(v_1)=\mu(v_2)$,

\item
$\nu(v_1)=\nu(v_2)$, $\mu(v_1)=-\mu(v_2)$,

\item
$\nu(v_1)=-\nu(v_2)$, $\mu(v_1)=\mu(v_2)$,

\item
$\nu(v_1)=-\nu(v_2)$, $\mu(v_1)=-\mu(v_2)$.
\end{enumerate}
Case (1) implies the contradiction $\nu(v_1)=\mu(v_1)=\mu(v_2)=-\nu(v_2)=-\nu(v_1)$. Case (4) implies the contradiction $\nu(v_1)=\mu(v_1)=-\mu(v_2)=\nu(v_2)=-\nu(v_1)$.

Note that switching does not change balance. For each edge $e$ of $\Sigma[X]$ with $u,v$ endpoints, recall that $e\in U$ if and only if $\sigma^\nu(e)=\nu(u)\sigma(e)\nu(v)=-1$. Consider the case (2). If $e_0$ is positive in $\Sigma$, then $e_0\not\in U$ by $\nu(v_1)=\nu(v_2)$, and $e_0\in U$ by $\mu(v_1)=-\mu(v_2)$, which is a contradiction. If $e_0$ is negative in $\Sigma$, then $e_0\in U$ by $\nu(v_1)=\nu(v_2)$, and $e_0\not\in U$ by $\mu(v_1)=-\mu(v_2)$, which is also a contradiction. Finally, consider the case (3). If $e_0$ is positive in $\Sigma$, then $e_0\in U$ by $\nu(v_1)=-\nu(v_2)$, and $e_0\not\in U$ by $\mu(v_1)=\mu(v_2)$, which is a contradiction. If $e_0$ is negative in $\Sigma$, then $e_0\not\in U$ by $\nu(v_1)=-\nu(v_2)$, and $e_0\in U$ by
$\mu(v_1)=\mu(v_2)$, which is also a contradiction.
\end{proof}

The following corollary concerning signed graph cuts follows straightforwardly from Theorem \ref{thm:uni-cut}.

\begin{cor}\label{cor:cut}
Let $\Sigma$ be connected and unbalanced. Let $X$ be a nonempty vertex subset and $\Sigma[X]$ be decomposed into connected components $\Sigma[X_i]$. Then there exists a switching $\nu$ on $X$ such that $U:=\delta X^\nu$ is a cut of $\Sigma$. Moreover, each $\Sigma[X_i]\smallsetminus U$ is connected and $\nu$ is unique up to sign on each component of $\Sigma[X]$.
\end{cor}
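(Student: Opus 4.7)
The plan is to reduce this corollary to Theorem~\ref{thm:uni-cut} by treating the components of $\Sigma[X]$ one at a time and then gluing the resulting switchings. First, for each component $\Sigma[X_i]$, I apply Theorem~\ref{thm:uni-cut}(a) to the ambient connected unbalanced signed graph $\Sigma$ with the vertex subset $X_i$. This produces a switching $\nu_i$ on $X_i$ so that $U_i:=\delta X_i^{\nu_i}$ is a uni-cut of $\Sigma$ and $\Sigma[X_i]\smallsetminus U_i$ is connected. Define $\nu:X\to\{-1,1\}$ by $\nu|_{X_i}=\nu_i$ for every $i$.

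The key geometric remark is that because the $X_i$ are the components of $\Sigma[X]$, no edge of $\Sigma$ has one endpoint in $X_i$ and another in $X_j$ for $i\neq j$: such an edge would lie in $\Sigma[X]$ and contradict the component decomposition. Consequently
\[
[X,X^c]=\bigsqcup_i[X_i,X^c],\qquad O_X=\bigsqcup_i O_{X_i},\qquad E_X=\bigsqcup_i E_{X_i},
\]
where $E_{X_i}$ depends only on $\nu_i=\nu|_{X_i}$ since it is the set of edges of $\Sigma[X_i]$ that become negative in $\Sigma^\nu$. Combining these via \eqref{defn:eq-DXV} gives the clean decomposition $U:=\delta X^\nu=\bigsqcup_i\delta X_i^{\nu_i}=\bigsqcup_i U_i$.

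Next, I verify the four conditions of Definition~\ref{defn:cut} for $U$. Each condition concerns edges of $\Sigma[X]$, and every such edge lies inside a single $\Sigma[X_i]$; hence each condition factors through the components. Balancedness of $\Sigma[X]\smallsetminus U$ follows from balancedness of every $\Sigma[X_i]\smallsetminus U_i$; the sign conditions on $\Sigma^\nu[X]\cap U^\nu$ and $\Sigma^\nu[X]\smallsetminus U^\nu$ come from the same conditions on each component; and for $e\in\Sigma[X]\cap U$ we have $e\in\Sigma[X_i]\cap U_i$ for some $i$, whence $(\Sigma[X_i]\smallsetminus U_i)\cup e$ is unbalanced by the uni-cut property, forcing $(\Sigma[X]\smallsetminus U)\cup e$ to be unbalanced as well (unbalance of one component suffices). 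The moreover-clause is immediate: $\Sigma[X_i]\smallsetminus U=\Sigma[X_i]\smallsetminus U_i$ (the other pieces of $U$ touch $\Sigma[X_i]$ only through $O_{X_i}$ and $[X_i,X^c]$, none of which are edges of $\Sigma[X_i]$), and this is connected by Theorem~\ref{thm:uni-cut}(a).

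For the uniqueness clause, suppose $\mu$ is another switching on $X$ with $\delta X^\mu=\delta X^\nu$. Intersecting both sides with the edge set of $\Sigma[X_i]$ yields $E_{X_i}^{\mu_i}=E_{X_i}^{\nu_i}$, and since the $[X_i,X^c]$-part and $O_{X_i}$-part are independent of the switching, this gives $\delta X_i^{\mu_i}=\delta X_i^{\nu_i}$. Theorem~\ref{thm:uni-cut}(b), applied component-wise, then forces $\mu_i=\pm\nu_i$, proving that $\nu$ is unique up to a sign on each component of $\Sigma[X]$. I do not anticipate a real obstacle here; the only step that needs care is the observation that distinct components of $\Sigma[X]$ are joined by no edges of $\Sigma$, which is exactly what makes both the cut verification and the uniqueness argument reduce cleanly to the component-wise statements of Theorem~\ref{thm:uni-cut}.
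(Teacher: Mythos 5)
Your proof is correct and takes essentially the same route as the paper, which gives no separate argument at all: it merely remarks that Corollary~\ref{cor:cut} ``follows straightforwardly'' from Theorem~\ref{thm:uni-cut}, and your component-wise application of Theorem~\ref{thm:uni-cut}(a) and (b), glued via the observation that distinct components of $\Sigma[X]$ are joined by no edges of $\Sigma$, is precisely that intended reduction. The details you supply --- the disjoint decomposition $\delta X^\nu=\bigsqcup_i \delta X_i^{\nu_i}$, the componentwise check of the cut conditions of Definition~\ref{defn:cut}, and the componentwise uniqueness --- are exactly what the paper leaves implicit.
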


\noindent
{\bf Remark 2.}
The concept of cut is not well-defined for signed graph in the literature. Chen-Wang \cite{Chen-Wang-EJC} gave a correct definition of cut for signed graph with no outer-edges. Zaslavsky \cite[p.\,54]{Zaslavsky-signed-graphs-DAM} proposed a so-called ``improving set" as a cut, meaning a nonempty edge subset whose removal increases the number of balanced components, and claiming that it is analogous to the cut of ordinary graph. However, even for an ordinary graph, an improving set is not necessarily a cut. For instance, for $K_3$ the complete graph of three vertices, its edge set is an improving set, but is not a cut in the sense of graph theory.\vspace{1ex}

Unlike circuit, the notion of cut is not intrinsic as it cannot exist without an ambient signed graph. The current notation of cut, $U=\delta X^\nu$, is related to the old notation
\begin{equation}
U=[X,X^c]\cup E_X\cup O_X,
\end{equation}
where $E_X$ is required to be a minimal edge subset to be removed from $\Sigma[X]$ so that $\Sigma[X]\smallsetminus E_X$ is balanced. This required minimality is not necessarily satisfied in \eqref{defn:eq-DXV}. This old notation was adopted by Chen and Wang \cite{Chen-Wang-EJC} and by Zaslavsky \cite{Chen-Wang-Zaslavsky-DM}. In particular, $[X,X^c]\cup E_X$ is commonly used in the literature when $\Sigma$ contains no outer-edges.

\begin{thm}[Characterization of signed-graph bond]\label{thm:bond}
Let $\Sigma$ be connected and unbalanced. Let $U=\delta X^\nu=[X,X^c]\cup E_X\cup O_X$ be a cut of $\Sigma$. Then
\begin{enumerate}[\hspace{5mm}\rm (a)]
\item
The removal of $U$ increases the number of balanced components.

\item
The cut $U$ is a bond if and only if $\Sigma[X]$ is connected and each component of $\Sigma[X^c)$ is unbalanced.

\item
An edge subset $S$ of $\Sigma$ is a bond of $\Sigma$ if and only if $S$ is a minimal edge subset whose removal increases exactly one balanced component, that is,
\begin{equation}\label{Thm:balance-component+1}
b(\Sigma\setminus S)=b(\Sigma)+1,\quad b(\Sigma)=b((\Sigma\setminus S)\cup e)\;\;\mbox{for}\;\; e\in S.
\end{equation}
\end{enumerate}
\end{thm}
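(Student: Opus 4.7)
I would prove the three parts in order, using (a) to streamline (b) and (c). For (a), unwinding the partition $E=\Sigma[X]\sqcup[X,X^c]\sqcup O_X\sqcup\Sigma[X^c)$ gives $\Sigma\setminus U=(\Sigma[X]\setminus E_X)\sqcup\Sigma[X^c)$. Since $\Sigma[X]\setminus E_X$ is balanced by Definition~\ref{defn:cut} and nonempty ($X\ne\varnothing$), it contributes at least one balanced component; together with $b(\Sigma)=0$ this yields $b(\Sigma\setminus U)\geq 1>b(\Sigma)$.

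For the ``if'' direction of (b), I rely on the monotonicity principle that adding an edge to a signed graph cannot create a new balanced component. Under the hypothesis, Theorem~\ref{thm:uni-cut}(a) makes $\Sigma[X]\setminus E_X$ connected, so $b(\Sigma\setminus U)=1$. A case check on $e\in U$ (either $e\in E_X$ unbalances the $X$-piece, or $e\in[X,X^c]$ merges it with an unbalanced $X^c$-component, or $e\in O_X$ introduces an outer-edge) gives $b((\Sigma\setminus U)\cup e)=0$. For any cut $U_0\subsetneq U$ and any $e^*\in U\setminus U_0$, monotonicity and part (a) then yield the contradiction $1\leq b(\Sigma\setminus U_0)\leq b((\Sigma\setminus U)\cup e^*)=0$.

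For the ``only if'' direction of (b), I argue contrapositively. First, if $\Sigma[X]$ has components $\Sigma[X_1],\ldots,\Sigma[X_k]$ with $k\geq 2$, then $E_X=\bigsqcup_i E_{X_i}$ and each $U_i:=\delta X_i^{\nu|_{X_i}}$ inherits the cut conditions of $U$, so $U=\bigsqcup_i U_i$ with $U_1\subsetneq U$ a proper sub-cut (each $U_i$ is nonempty because $\Sigma$ is connected and $X_i\ne V$). So we may assume $\Sigma[X]$ is connected. If then some component $\Sigma[Y_0)$ of $\Sigma[X^c)$ is balanced, $O_{Y_0}=\varnothing$; I pick a switching $\mu_0$ on $Y_0$ making $\Sigma^{\mu_0}[Y_0]$ all-positive, extend $\mu=(\nu|_X,\mu_0)$ to $X':=X\cup Y_0$, and if every $[X,Y_0]$-edge is $\mu$-negative I replace $\mu_0$ by $-\mu_0$; this forces $[X,Y_0]^\mu_{\mathrm{pos}}\ne\varnothing$ and makes $U':=\delta(X')^\mu$ differ from $U$ only on $[X,Y_0]$ and be strictly smaller. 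The main obstacle is verifying that $U'$ is itself a cut in the sense of Definition~\ref{defn:cut}. Balance of $\Sigma[X']\setminus U'$ is immediate since all its edges are $\mu$-positive, but ``add-back unbalances'' is delicate: for $e\in E_X$ the required negative circle lives inside $\Sigma[X]\setminus E_X$ by the original cut hypothesis; for $e\in[X,Y_0]^\mu_{\mathrm{neg}}$ a path between endpoints of $e$ is built by concatenating a segment of the connected $\Sigma[X]\setminus E_X$ (by Theorem~\ref{thm:uni-cut}(a)), a $\mu$-positive $[X,Y_0]$-crossing, and a segment of the connected $\Sigma[Y_0]$; being $\mu$-positive, this path together with the $\mu$-negative $e$ closes to a negative circle.

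For (c), the forward implication follows from (b) together with the $b$-computations in the (b)(if) argument. For the converse, let $G_0$ be the unique balanced component of $\Sigma\setminus S$, set $X:=V(G_0)$, and let $\nu$ be the switching on $X$ making $G_0^\nu$ all-positive. Since $G_0$ carries no outer-edges, $O_X\subseteq S$; the condition $b((\Sigma\setminus S)\cup e)=b(\Sigma)=0$ forces $[X,X^c]\subseteq S$ and $\Sigma[X^c)\cap S=\varnothing$, because otherwise adding back the excluded edge preserves $G_0$ as a balanced component. For $e\in\Sigma[X]\cap S$, any negative circle certifying $b((\Sigma\setminus S)\cup e)=0$ traverses $e$ with the rest in the all-positive $G_0^\nu$, forcing $\sigma^\nu(e)=-1$; hence $\Sigma[X]\cap S=E_X$ and $S=\delta X^\nu$. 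Finally $b(\Sigma\setminus S)=1$ rules out isolated $X^c$-vertices, so the structural hypothesis of (b) is met and $S$ is a bond.
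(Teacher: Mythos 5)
Your proposal is correct, and for parts (a), (b)($\Leftarrow$) and (c) it runs essentially parallel to the paper's proof: the paper likewise computes $b(\Sigma\setminus U)=1$ and $b((\Sigma\setminus U)\cup e)=0$ under the hypothesis of (b), derives the contradiction $1\leq b(\Sigma\setminus U')\leq b(\Sigma\setminus(U\setminus e))=0$ from monotonicity of $b$ under edge addition, and in (c) reads off the cut form $S=[Y,Y^c]\cup E_Y\cup O_Y$ from the unique balanced component of $\Sigma\setminus S$ and appeals back to (b). In fact your three-way case check of the add-back condition is more careful than the paper's one-line appeal to ``the definition of cut'', which literally covers only $e\in E_X$; the cases $e\in[X,X^c]$ and $e\in O_X$ do need the hypothesis that the components of $\Sigma[X^c)$ are unbalanced, exactly as you argue.

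Where you genuinely diverge is the ``only if'' direction of (b). The paper, assuming $U$ is a bond and $\Sigma[X^c)$ has a balanced component $\Sigma[Y_j]$, observes that $[Y_j,Y_j^c]$ is itself a uni-cut contained in $U$; minimality forces $U=[Y_j,Y_j^c]$, whence $E_X=O_X=\varnothing$ and $Y_j=X^c$, and a sign-splitting argument on $[X,X^c]$ then shows that either $\Sigma$ is balanced or $[X,X^c]^-$ is a properly smaller cut --- a contradiction either way. You instead enlarge $X$: absorb $Y_0$ into $X'=X\cup Y_0$, build the switching $\mu=(\nu|_X,\mu_0)$, and exhibit $U'=\delta(X')^\mu=(U\setminus[X,Y_0])\cup[X,Y_0]^{\mu}_{\rm neg}\subsetneq U$ directly as a smaller cut. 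Both routes work; yours is more constructive (it names the smaller cut rather than funnelling everything through $U=[Y_j,Y_j^c]$), at the price of verifying the cut axioms of Definition~\ref{defn:cut} for $U'$, which you do correctly --- the connectivity of $\Sigma[X]\setminus E_X$ (from Theorem~\ref{thm:uni-cut}(a)) and of $\Sigma[Y_0]$ is precisely what makes the add-back negative circles exist.

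One corner is left unaddressed: Definition~\ref{defn:cut} requires a cut to be \emph{nonempty}, and your $U'$ could a priori be empty --- namely when $E_X=O_X=\varnothing$, $[X,X^c]=[X,Y_0]$, and every edge of $[X,Y_0]$ is $\mu$-positive. You should add that in this situation connectivity of $\Sigma$ forces $V=X\cup Y_0$, so $\Sigma^\mu$ is all-positive with no outer-edges, i.e.\ $\Sigma$ is balanced, contradicting the standing hypothesis; this is exactly the point at which the paper's own proof invokes unbalancedness of $\Sigma$. (Similarly trivial but worth a word: $[X,Y_0]\neq\varnothing$ by connectivity, which your flip-and-pick-a-positive-crossing step silently uses.) With those two lines added, your argument is complete.
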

\begin{figure}[h]
\centering {\includegraphics[width=80mm]{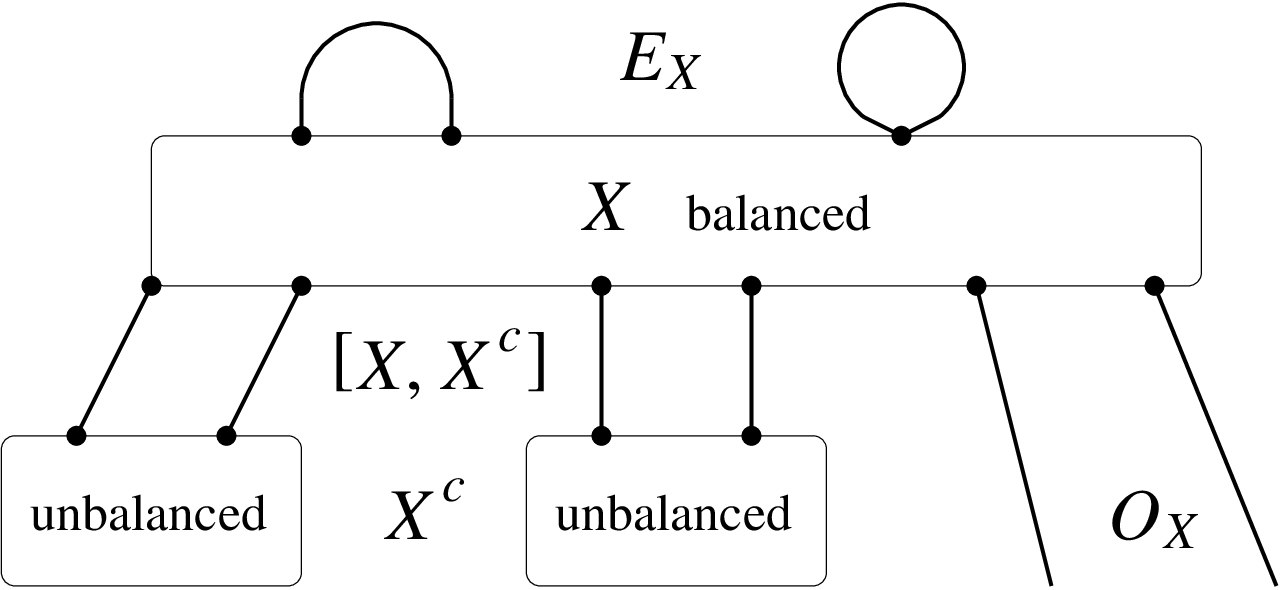}}
\caption{A typical bond $\delta X^\nu=[X,X^c]\cup E_X\cup O_X$.}
\label{Fig:Circuit-patterns}
\end{figure}
\begin{proof}
(a) When $U$ is removed from $\Sigma$, the signed subgraph $\Sigma[X]\setminus U$ is balanced, and is disjoint from $\Sigma[X^c)$. If $X$ is properly contained in $V$, the number of balanced components increased by the removal of $U$ is at least the number of components of $\Sigma[X]\setminus U$. If $X=V$, then $U=E_X\cup O_X$; the number of components of $\Sigma[X]\setminus U$ equals the number of balanced components increased by the removal of $U$.

(b) ``$\Rightarrow$" Let $U$ be a bond, that is, $U$ is a minimal cut. The minimality of $U$ implies that $U$ must be a uni-cut. If $X=V$, then $\Sigma[V)=\Sigma$ and $U=E_V\cup O_V$. Obviously, $\Sigma[V)$ is connected and unbalanced, $\Sigma[V^c)=\varnothing$; nothing is to be proved.

If $X$ is properly contained in $V$, then $\Sigma[X]$ is connected, for $U$ is a uni-cut. Suppose that $\Sigma[X^c)$ contains a balanced component $\Sigma[Y_j]$. Then $[Y_j,{Y_j}^c]$ is a uni-cut contained in $U$. The minimality of $U$ implies $U=[Y_j,{Y_j}^c]$. Thus $Y_j^c=X$ and $E_X=O_X=\varnothing$. We obtain $U=[X,X^c]$; all its edges are positive or all are negative; otherwise, both $[X,X^c]^+$ and $[X,X^c]^-$ are nonempty; then $[X,X^c]^-$ is a cut contained properly in $U$, contradictory to the minimality of $U$. We see that $\Sigma$ itself is balanced, contradictory to the unbalance of $\Sigma$.

``$\Leftarrow$" Let $U$ be a cut satisfying the given conditions. Since $U$ is a uni-cut, we see that $\Sigma[X]\setminus U$ is the only balanced component of $\Sigma\setminus U$ by Theorem \ref{thm:bond}(a). We have $b(\Sigma\setminus U)=1$. Since $(\Sigma\setminus U)\cup e$ is unbalanced for each member $e$ of $U$ by definition of cut, we have
\[
b((\Sigma\setminus U)\cup e)=b(\Sigma)=0.
\]
Suppose $U$ is not a bond, that is, $U$ is not minimal. There exists a cut $U'$ contained properly in $U$. Then $U'$, being a cut, implies that $b(\Sigma\setminus U')\geq 1$. Choose an edge $e\in U\setminus U'$. Clearly, $U'$ is contained in $U\setminus e$; then $b(\Sigma\setminus U')\leq b(\Sigma\setminus(U\setminus e))$. We arrive at the contradiction
\begin{align*}
1 \leq b(\Sigma\setminus U') & \leq b(\Sigma\setminus(U\setminus e))\\
&=b((\Sigma\setminus U)\cup e) =b(\Sigma)=0.
\end{align*}

(c) ``$\Rightarrow$''
Let $S$ be a bond of $\Sigma$, written as the cut form $S=[Y, Y^c]\cup E_Y\cup O_Y$, satisfying the conditions of part (b). Then $b(\Sigma\setminus S)=1$ and $b((\Sigma\setminus S)\cup e)=b(\Sigma)=0$. Clearly, the removal of $S$ results exactly one balanced component $\Sigma[Y]\setminus S$.

``$\Leftarrow$''
Let $b(\Sigma\setminus S)=1$ and $b((\Sigma\setminus S)\cup e)=b(\Sigma)=0$ for each $e\in S$. Consider the vertex set $Y$ of the unique balanced component of $\Sigma\setminus S$. Then $S=[Y,Y^c]\cup E_Y\cup O_Y$, where $E_Y$ is an edge subset of $\Sigma[Y]$, and $\Sigma[Y]\setminus E_Y$ is the only balanced component of $\Sigma\setminus S$. We see that $S$ is a cut of $\Sigma$ and satisfies the conditions of part (b). It follows from (b) that $S$ is a bond of $\Sigma$.
\end{proof}

\begin{cor}\label{cor:bond}
Let $U=\delta X^\nu$ be a cut of $\Sigma$, where $\Sigma$ is not necessarily connected. Then $U$ is a bond of $\Sigma$ if and only if $\Sigma[X)$ is connected and the component $\Sigma[Y)$ of $\Sigma$ that contains $\Sigma[X)$ is either unbalanced or balanced. Moreover, if $\Sigma[Y)$ is balanced, then $X\subsetneq Y$ and $U=[X,Y\smallsetminus X]$.
\end{cor}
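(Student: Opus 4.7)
The strategy is to localize to the component $\Sigma[Y)$ of $\Sigma$ containing $X$ and then reduce to Theorem~\ref{thm:bond}.

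First I would show that $\Sigma[X)$ must be connected. Writing $U = [X,X^c]\cup E_X\cup O_X$, every edge of $U$ is incident to some vertex of $X$, so $U$ lies inside the union of the components of $\Sigma$ meeting $X$. If $\Sigma[X)$ decomposed into components $\Sigma[X_1),\ldots,\Sigma[X_k)$ with $k\geq 2$, then by Corollary~\ref{cor:cut} each $\delta X_i^{\nu|_{X_i}}$ is itself a cut of $\Sigma$ and $\delta X_i^{\nu|_{X_i}}\subsetneq U$, contradicting the minimality of the bond $U$. Hence $\Sigma[X)$ is connected and $X$ sits in the vertex set $Y$ of a unique component $\Sigma[Y)$ of $\Sigma$, with $U\subseteq\Sigma[Y)$.

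Next I would split on the balance of $\Sigma[Y)$. If $\Sigma[Y)$ is unbalanced, then bondness is intrinsic to this component, and Theorem~\ref{thm:bond} applied directly to the connected unbalanced signed graph $\Sigma[Y)$ yields the characterization. If $\Sigma[Y)$ is balanced, then by definition it contains no outer-edges and all its circles are positive, so every subgraph, in particular $\Sigma[X]$, is balanced. The switching $\nu$ can therefore be chosen so that every edge of $\Sigma^\nu[X]$ is positive, which forces $E_X=\varnothing$; combined with $O_X\subseteq O_Y=\varnothing$ this gives $U = [X,X^c]$. Because $X\subseteq Y$ and there are no edges of $\Sigma$ between distinct components, we have $[X,X^c] = [X,Y\smallsetminus X]$, and the non-emptiness of $U$ forces $X\subsetneq Y$.

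For the converse, the unbalanced case is an immediate appeal to Theorem~\ref{thm:bond}(b) inside $\Sigma[Y)$. In the balanced case I would verify that $U = [X,Y\smallsetminus X]$ is minimal by showing that any proper sub-cut $U' = \delta(X')^{\nu'}\subsetneq U$ must have $X'$ properly straddling $X$, and the connectedness of $\Sigma[X)$ then produces an edge of $\Sigma[X]$ (respectively of $\Sigma[Y\smallsetminus X]$ if the symmetric case occurs) in $U'$ which cannot lie in $[X,Y\smallsetminus X]$, a contradiction. The main technical obstacle is precisely this minimality check in the balanced case: without an immediate analogue of Theorem~\ref{thm:bond}(b), the key lever is the uniqueness (up to sign) of the switching supplied by Corollary~\ref{cor:cut}, which rigidly ties any candidate sub-cut to a proper vertex subset of $X$ and ultimately contradicts the connectedness of $\Sigma[X)$.
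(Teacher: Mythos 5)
Your high-level strategy --- localize to the component $\Sigma[Y)$ containing $X$ and reduce everything to Theorem~\ref{thm:bond} --- is exactly the paper's approach (its entire proof reads ``Trivial by Theorem~\ref{thm:bond}''). The genuine gap is in your balanced case: the minimality check you sketch cannot be carried out, because the claim it targets is false as you have set it up. If $\Sigma[Y)$ is balanced and $\Sigma[X)$ is connected, the cut $U=[X,Y\smallsetminus X]$ need \emph{not} be a bond: one also needs $\Sigma[Y\smallsetminus X]$ to be connected. Concretely, let $\Sigma[Y)$ be a path $a$--$b$--$c$ with two positive links and take $X=\{b\}$. Then $U=\{ab,bc\}$ is a cut, $\Sigma[X)$ is connected, and the component is balanced; yet for $X'=\{a\}$ with the trivial switching, $\delta X'^{\nu'}=\{ab\}$ is itself a cut properly contained in $U$, so $U$ is not a bond. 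Note how this breaks your argument: the offending sub-cut has $X'=\{a\}$, which does not straddle $X$ at all, and it contains no edge of $\Sigma[X]$ or of $\Sigma[Y\smallsetminus X]$, so the contradiction you predict never materializes. In general, whenever $\Sigma[Y\smallsetminus X]$ splits into two components $A$ and $B$, the set $[X,A]$ is a nonempty cut (nonempty because $Y$ is connected) properly contained in $U$. So the balanced case requires the classical graph-bond condition that \emph{both} sides induce connected subgraphs, and a correct proof must impose or derive it; your sketch does neither. (In fairness, the corollary's own wording elides this condition as well --- a careful proof attempt is precisely what should expose it.)

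Two smaller issues of the same kind. In your connectivity step, $\delta X_i^{\nu|_{X_i}}$ can be empty --- for instance when $X_i$ spans an entire balanced, outer-edge-free component of $\Sigma$ on which $\nu$ makes every edge positive --- and an empty set is not a cut; if only one $X_i$ contributes edges to $U$, your containment is not proper and no contradiction with minimality arises (this also shows that the ``only if'' direction is sensitive to which representation $U=\delta X^\nu$ one is handed). And in the unbalanced case, Theorem~\ref{thm:bond}(b) characterizes bonds by \emph{two} conditions: $\Sigma[X]$ connected \emph{and} every component of $\Sigma[X^c)$ unbalanced. Saying that the theorem ``yields the characterization'' conceals the second condition, which does not follow from connectivity of $\Sigma[X)$ alone; any honest reduction has to carry it along, both in the forward and in the converse direction.
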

\begin{proof}
Trivial by Theorem \ref{thm:bond}.
\end{proof}

A {\em cut chain} of $\Sigma$ is an integer-valued $1$-chain ${\bf I}_{\omega(U)}$, where $U=\delta X^\nu$ is a cut of $\Sigma$ and $\omega(U)$ is a direction of $U$, defined by
\begin{align}\label{eq:cut-chain}
{\bf I}_{\omega(U)}&=\delta({\bf 1}_X^\nu)=\delta(\nu\cdot{\bf 1}_X).
\end{align}
More explicitly,
\begin{align*}
{\bf I}_{\omega(U)}(\vec{e}\,)&=\left\{
\begin{array}{rl}
2 & \mbox{if\,$\vec{e}\in\omega(U)$, $e\in\Sigma[X]$,}\\
1 & \mbox{if\,$\vec{e}\in\omega(U)$, $e\not\in\Sigma[X]$,}\\
0 & \mbox{if\,$e\not\in U$.}
\end{array}\right.
\end{align*}
In the case that $U$ is an edge subset of $\Sigma[X]$, the chain ${\bf I}_{\omega(U)}=2\cdot{\bf 1}_{\omega(U)}$ is not a principal chain. We modify each cut chain to its {\em reduced form}
\begin{align}\label{eq:reduced-cut-chain}
\tilde{\bf I}_{\omega(U)}&=
\left\{\begin{array}{rl}
{\bf 1}_{\omega(U)} & \mbox{if\; $U\subset \Sigma[X]$,}\\
{\bf I}_{\omega(U)} & \mbox{if\; $U\not\subset \Sigma[X]$.}
\end{array}\right.
\end{align}

\begin{prop}[Characterization of tensions on uni-cut]\label{Lem:Tension-support-cut-lemma}
Let $U=\delta X^\nu$ be a uni-cut of $\Sigma$ with direction $\omega(U)$ such that all arcs of $\omega^\nu(U)$ point to $X$. Write $U=[X,X^c]\cup E_X\cup O_X$ and decompose $\Sigma[X^c)$ into connected components $\Sigma[Y_j)$. Let ${\bf g}$ be a tension of $\Sigma$ supported on $U$.
\begin{enumerate}[\hspace{5mm}\rm (a)]
\item
If $E_X\neq\varnothing$, then ${\bf g}=b$ is constant on the arc set $\omega(E_X)$.

\item
If $O_X\neq\varnothing$, then ${\bf g}=c$ is constant on the arc set $\omega(O_X)$. If in addition $E_X\neq\varnothing$, then $b=2c$.

\item
If $(\Sigma[X]\setminus E_X)\cup[X,Y_j]\cup\Sigma[Y_j)$ is balanced, then ${\bf g}=c_j$ is constant on the arc set $\omega([X,Y_j])$.

\item
If $\Sigma[Y_j)$ is unbalanced, then ${\bf g}=c'_j$ is constant on the arc set $\omega([X,Y_j])$. In addition, if $E_X\neq\varnothing$ then $b=2c'_j$. If $O_X\neq\varnothing$ and $O_{Y_j}=\varnothing$ then $2c=2c'_j$. If $O_{Y_i}=\varnothing$ or $O_{Y_j}=\varnothing$ for two unbalanced components of $\Sigma[X^c)$, then $2c'_i=2c'_j$. Moreover, if $O_X\neq\varnothing$ and $O_{Y_j}\neq\varnothing$ then $c=c'_j$. If $O_{Y_i}\neq\varnothing$ and $O_{Y_j}\neq\varnothing$ then $c'_i=c'_j$.

\item
If $\Sigma[Y_j)$ is balanced and $(\Sigma[X]\setminus E_X)\cup[X,Y_j]\cup\Sigma[Y_j)$ is unbalanced, then ${\bf g}=c_j^+$ is constant on $\omega^+([X,Y_j]):=\omega(([X,Y_j]^{\nu\nu_j+})^{\nu\nu_j})$, and ${\bf g}=c_j^-$ is constant on $\omega^-([X,Y_j]):=\omega(([X,Y_j]^{\nu\nu_j-})^{\nu\nu_j})$, where $\nu_j$ is a switching on $Y_j$ such that all edges of a spanning tree $T_j$ of $\Sigma[Y_j]$ are positive. In addition, if $E_X\neq\varnothing$ then $b=c_j^++c_j^-$.
\end{enumerate}
\end{prop}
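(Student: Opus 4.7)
The unifying strategy is to exploit the defining equation $\langle \mathbf{f}, \mathbf{g}\rangle = 0$ for every integer-valued flow $\mathbf{f}$. Since $\mathbf{g}$ is supported on $U$, only the values of $\mathbf{f}$ on $U$ contribute to the pairing. For each equality I want to prove, I will construct a nowhere-zero integer circular flow $\mathbf{I}_{\omega(C)}$ on a circuit $C$ whose intersection with $U$ is precisely the pair (or triple) of arcs whose $\mathbf{g}$-values are being compared, and then read off the claimed relation from $\langle \mathbf{I}_{\omega(C)}, \mathbf{g}\rangle = 0$ together with the multiplicity formula \eqref{eq:circuit flow}. Working after the switching $\nu$ is notationally convenient: in $\Sigma^\nu$ the set $E_X^\nu$ consists of the negative edges of $\Sigma^\nu[X]$, $\Sigma^\nu[X]\setminus U^\nu = \Sigma[X]\setminus U$ is a balanced connected spanning subgraph of $\Sigma^\nu[X]$ (by Theorem~\ref{thm:uni-cut}(a)), and every arc of $\omega^\nu(U)$ points into $X$.

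For (a), two edges $e_1, e_2 \in E_X$ each close up into a negative circle through a positive path in $\Sigma^\nu[X]\setminus U^\nu$; gluing these negative circles at a common vertex, or joining them by a line segment in $\Sigma^\nu[X]\setminus U^\nu$, yields a circuit of type (C3) or (C5) whose only edges in $U$ are $e_1$ and $e_2$, each with flow multiplicity $1$. The resulting identity from the tension equation forces $\mathbf{g}(\vec{e}_1)=\mathbf{g}(\vec{e}_2)$. Part (b) is analogous: two outer-edges of $O_X$ are joined by a type (C1) open-line circuit through $\Sigma^\nu[X]\setminus U^\nu$, giving a common constant $c$; the relation $b=2c$ comes from a type (C4) circuit that combines a negative edge $e\in E_X$ (forming a negative circle with a positive path) and an outer-edge $e'\in O_X$ (as the outer-edge of the attached half-line), for which $e$ carries flow multiplicity $1$ and $e'$ carries multiplicity $2$.

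Parts (c)--(e) follow the same paradigm, the circuits crossing into $\Sigma[Y_j)$ being chosen according to its balance status. For (c), the assumed balance of $(\Sigma[X]\setminus E_X)\cup[X,Y_j]\cup\Sigma[Y_j)$ furnishes, for any two edges of $[X,Y_j]$, a positive circle (type (C2)) through the balanced union, so $c_j$ is well-defined. For (d), the unbalanced $\Sigma[Y_j)$ supplies either a negative circle inside $\Sigma[Y_j]$ or an outer-edge in $O_{Y_j}$, and combining this negative substructure with a negative substructure on the $X$-side---a negative edge of $E_X$, an outer-edge of $O_X$, or a negative substructure of another $\Sigma[Y_i)$---through a $[X,Y_j]$ link and a path in $\Sigma^\nu[X]\setminus U^\nu$ produces circuits of types (C3), (C4), or (C5). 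The presence or absence of outer-edges determines whether a given pair is realized through a (C1)/(C2) pattern (factor $1$ on both sides) or through a (C3)/(C4)/(C5) pattern with double connectors (factor $2$ on one or both sides), which is exactly the case distinction in the statement. For (e), the combined switching $\nu\nu_j$ makes $\Sigma[X]\setminus E_X$ and a spanning tree of $\Sigma[Y_j]$ positive, and $[X,Y_j]^{\nu\nu_j}$ splits into positive and negative classes; two edges of the same $\nu\nu_j$-sign close up to a positive circle through the union (type (C2)), giving separately constant values $c_j^+$ and $c_j^-$, while two edges of opposite $\nu\nu_j$-signs close up to a negative circle which, joined to a negative circle on $e\in E_X$ via a (C3) or (C5) circuit, yields $b=c_j^++c_j^-$.

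The main obstacle will be sign bookkeeping rather than the existence of the circuits. Each assertion is a signed relation among values of $\mathbf{g}$ on arcs of $\omega(U)$, and the signs depend on how the bi-directed orientations induced by $\omega(C)$ match those induced by $\omega(U)$ on the shared edges. The standing convention that every arc of $\omega^\nu(U)$ points into $X$ is designed to force all signs to align uniformly so that equalities come out with the correct sign in each case, but verifying this requires a separate short check in each circuit type (C1)--(C5) and each edge type (negative link, negative loop, crossing link, outer-edge). Once the requisite circuits are produced---using connectivity of $\Sigma[X]\setminus U$, connectivity of each $\Sigma[Y_j]$, and the balance/unbalance hypotheses---the identities fall out of the tension equation $\langle \mathbf{f}, \mathbf{g}\rangle = 0$ by direct substitution of the multiplicities from \eqref{eq:circuit flow}.
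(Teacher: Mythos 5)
Your overall strategy coincides with the paper's: pair $\mathbf{g}$ against integer-valued flows whose supports meet $U$ only in the edges being compared, and read off the relations from the multiplicities in \eqref{eq:circuit flow}. Parts (b), (c), and your account of the $2c=2c'_j$ versus $c=c'_j$ dichotomy in (d) match the paper's constructions. But there is a genuine gap where you deviate: you insist that every comparison be realized by a circuit of one of the five types of Theorem~\ref{thm:circuits}, and the specific circuits you invoke need not exist. In (a) you claim that the two negative circles obtained by closing $e_1,e_2\in E_X$ through positive paths in $\Sigma[X]\setminus U$ can be glued at a common vertex or joined by a line segment to form a (C3) or (C5) circuit. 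That requires the two negative circles to be edge-disjoint and to meet in at most one vertex, which can fail outright: let $\Sigma[X]$ consist of two vertices joined by one positive edge $f$ and two negative parallel edges $e_1,e_2$, with $U=E_X=\{e_1,e_2\}$ (a legitimate uni-cut). Every negative circle in $\Sigma[X]$ uses $f$, so there are no two edge-disjoint negative circles, and no (C3) or (C5) circuit contains both $e_1$ and $e_2$; the only circuit through both is the positive circle $\{e_1,e_2\}$, of type (C2). The same degeneration threatens the negative-circle gluings you propose for the cross-class step of (d) and for $b=c_j^++c_j^-$ in (e): the circles you want to glue may overlap in a segment, in which case their union is a theta-type subgraph, not a circuit. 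So your closing remark that the main obstacle is ``sign bookkeeping rather than the existence of the circuits'' has it backwards---existence is exactly the issue.

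The paper sidesteps this entirely in precisely those steps by not using circuits at all: it uses directed closed positive walks (and open walks), which are integer-valued flows by the lemmas of Section 2 regardless of how the connecting paths overlap or self-intersect, and whose pairing with $\mathbf{g}$ still involves only $e_1,e_2$ because the connecting paths lie in $\Sigma[X]\setminus U$ (respectively in $\Sigma[Y_j)$ or $T_j$), where $\mathbf{g}$ vanishes. To repair your argument you must either (i) replace the circuit constructions in (a), the cross-class step of (d), and the final step of (e) by such walk flows, or (ii) add the degenerate cases to your analysis, noting that when the two negative circles share more than one vertex their union contains a positive circle of type (C2) through the two edges in question, which yields the same identity. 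Either repair is routine, but without one of them the proof as written does not go through.
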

\begin{proof}
Note that uni-cut is invariant under switching, that is, the edge subset $U$ of $\Sigma$ is a uni-cut if and only if $U^\nu$ is a uni-cut of $\Sigma^\nu$ for any switching $\nu$ on $X$. Since $\Sigma[X]$ is connected, so is $\Sigma[X]\smallsetminus E_X$ by definition of uni-cut. We may assume that all edges of $\Sigma[X]\smallsetminus U$ are positive and all edges of $\Sigma[X]\cap U$ are negative. We further assume that all arcs of $\omega(U)$ point to $X$.

(a) Given two edges $e_1,e_2$ of $E_{X}$ with arcs $\vec{e}_1,\vec{e}_2\in\omega(U)$ pointing to $X$, and the endpoints $u_1,v_1$ of $e_1$ and the endpoints $u_2,v_2$ of $e_2$. Choose a directed path $\omega(P_1)$ from $u_1$ to $u_2$ and a directed path $\omega(P_2)$ from $v_2$ to $v_1$ in $\Sigma[X]\smallsetminus U$. Then $\omega(W):=v_1\vec{e}_1\omega(P_1)\vec{e}_2^{\;-1}\omega(P_2)$ is a closed directed positive walk and ${\bf I}_{\omega(W)}$ is an integer-valued flow of $\Sigma$. See the left of Figure~\ref{Fig:b-cases}. We obtain
\[
{\bf g}(\vec e_2)-{\bf g}(\vec e_1)=\langle{\bf I}_{\omega(W)},{\bf g}\rangle=0,
 \]
that is, ${\bf g}(\vec e_1)={\bf g}(\vec e_2)$. So ${\bf g}=b$ is constant on the arc subset $\omega(E_X)$.

\begin{figure}[h]
\centering
\includegraphics[height=30mm]{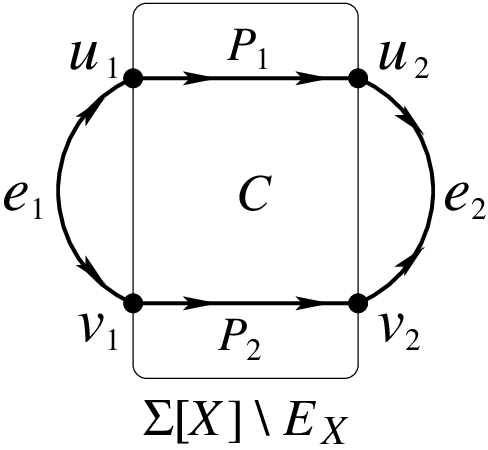}\hspace{10mm}
\includegraphics[height=30mm]{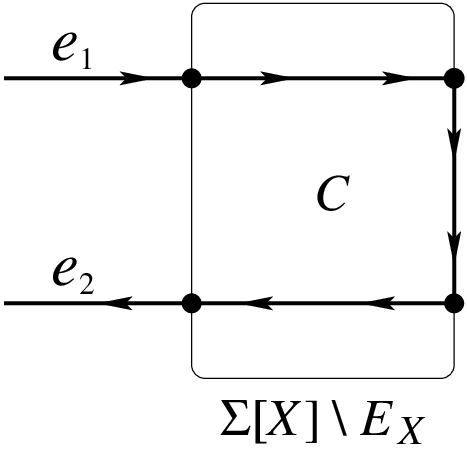}\hspace{10mm}
\includegraphics[height=30mm]{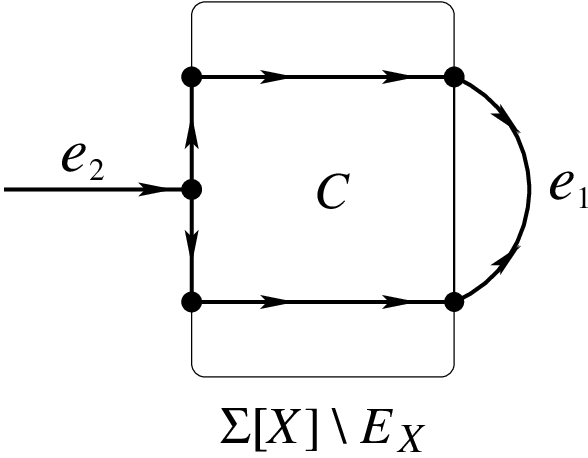}
\caption{Case (a) and (b)}\label{Fig:b-cases}
\end{figure}

(b) Given two edges $e_1,e_2$ of $O_X$ with arcs $\vec{e}_1,\vec{e}_2\in\omega(U)$. Let $C$ be a circuit of $\Sigma[X]\cup O_X$ in line pattern (C1) of Theorem~\ref{thm:circuits}, with a direction $\omega(C)$, such that $e_1,e_2\in C$. See the middle of Figure~\ref{Fig:b-cases}.
If $\vec{e}_1\in\omega(C)$, then $\vec{e}_2^{\;-1}\in\omega(C)$. We obtain
\[
{\bf g}(\vec{e}_1)-{\bf g}(\vec{e}_2)=\langle{\bf I}_{\omega(C)}, {\bf g}\rangle=0,
\]
that is, ${\bf g}(\vec{e}_1)={\bf g}(\vec{e}_2)$. So ${\bf g}=c$ is constant on the arc set $\omega(O_X)$. If in addition $E_X\neq\varnothing$, choose edges $e_1\in E_X,e_2\in O_X$ and a circuit $C$ of $\Sigma[X]\cup O_X$ in pattern (C4) of Theorem~\ref{thm:circuits}, with a direction $\omega(C)$ and $e_1,e_2\in C$. See the right of Figure~\ref{Fig:b-cases}. If $\vec{e}_1\in\omega(C)$, then $\vec{e}_2^{\;-1}\in\omega(C)$. We obtain
\[
{\bf g}(\vec{e}_1)-2{\bf g}(\vec{e}_2)=\langle{\bf I}_{\omega(C)}, {\bf g}\rangle=0,
\]
that is, ${\bf g}(\vec{e}_1)=2{\bf g}(\vec{e}_2)$. So $b=2c$.

(c) Note that $\Sigma[Y_j)$ contains no outer-edges, no negative circles, and $\Sigma[Y_j)=\Sigma[Y_j]$. There exists a switching $\nu_j$ on $Y_j$ such that all edges of $\Sigma^{\nu_j}[Y_j]$ are positive. We may simply assume that all edges of $\Sigma[Y_j]$ are already positive. Then edges of $[X,Y_j]$ are either all positive or all negative. Given two edges $e_1,e_2$ of $[X,Y_j]$, with the arcs $\vec{e}_1,\vec{e}_2\in\omega(U)$, and with $e_1,e_2$ both positive or both negative. Let $C$ be a circuit of $(\Sigma[X]\smallsetminus E_X)\cup[X,Y_j]\cup\Sigma[Y_j]$ in pattern (C2) of Theorem~\ref{thm:circuits}, with a direction $\omega(C)$, such that $e_1,e_2\in C$. See the left of Figure~\ref{Fig:c-case}, with edges of $[X,Y_j]$ all positive; and the right of Figure~\ref{Fig:c-case}, with edges of $[X,Y_j]$ all negative. If $\vec{e}_2\in\omega(C)$, then $\vec{e}_1^{\;-1}\in\omega(C)$. We obtain
\[
{\bf g}(\vec{e}_2)-{\bf g}(\vec{e}_1)
=\langle{\bf I}_{\omega(C)},{\bf g}\rangle=0,
\]
that is, ${\bf g}(\vec{e}_2)={\bf g}(\vec{e}_1)$. So ${\bf g}=c_j$ is constant on the arc set $\omega([X,Y_j])$ of $\omega(U)$.
\begin{figure}[h]
\centering
\includegraphics[height=30mm]{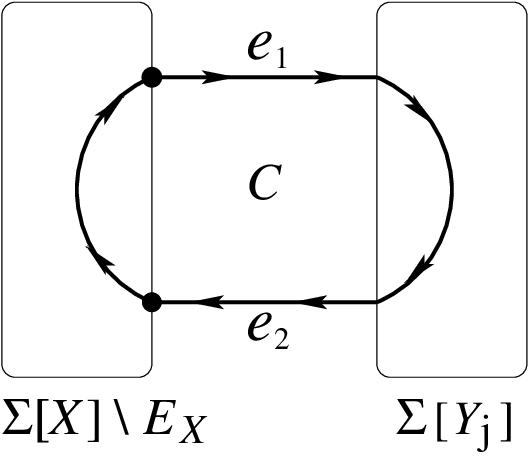}\hspace{20mm}
\includegraphics[height=30mm]{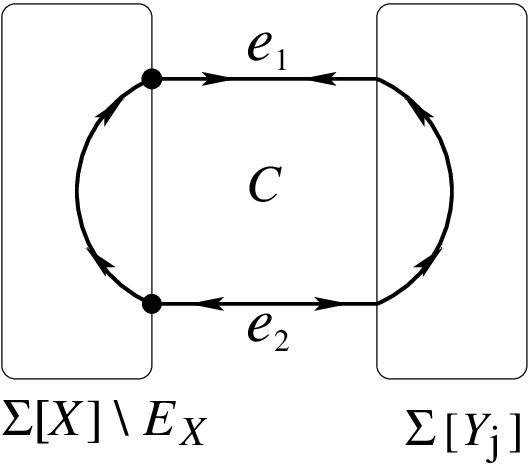}
\caption{Cases (c)}\label{Fig:c-case}
\end{figure}

(d) We first show that $\bf g$ is constant on $\omega([X,Y_j])$. Choose a spanning tree $T_j$ of $\Sigma[Y_j]$ and a switching $\nu_j$ such that all edges of ${T_j}^{\nu_j}$ are positive. We may assume that all edges of $T_j$ are already positive. Given arcs $\vec{e}_1,\vec{e}_2\in\omega([X,Y_j])$. Let $e_1,e_2\in[X,Y_j]^+$ or $e_1,e_2\in[X,Y_j]^-$. Choose a circuit $C$ of $(\Sigma[X]\setminus E_X)\cup[X,Y_j]\cup T_j$ in pattern (C2) of Theorem~\ref{thm:circuits}, with a direction $\omega(C)$ and $e_1,e_2\in C$. See the left for the former case and the middle for the latter case in Figure~\ref{Fig:c-case}.
If $\vec{e}_1\in\omega(C)$, then $\vec{e}_2^{\;-1}\in\omega(C)$. We obtain ${\bf g}(\vec{e}_1)-{\bf g}(\vec{e}_2)=\langle{\bf I}_{\omega(C)},{\bf g}\rangle=0$; that is, ${\bf g}(\vec{e}_1)={\bf g}(\vec{e}_2)$. We see that ${\bf g}$ is constant on $\omega([X,Y_j]^+)$, and constant on $\omega([X,Y_j]^-)$.

Let $\vec{e}_1\in\omega([X,Y_j]^+)$ and $\vec{e}_2\in\omega([X,Y_j]^-)$ pointing to $X$, written $e_1=u_1v_1$ and $e_2=u_2v_2$ with endpoints $u_1,u_2\in X,v_1,v_2\in Y_j$. Choose a directed path $\omega(P_0)$ whose initial arc points away from $u_1$ and whose terminal arc points to $u_2$ in $\Sigma[X]\smallsetminus E_X$. Choose directed paths $\omega(P_i)$ whose initial arcs point away from a vertex $v_0$ and whose terminal arcs point to $v_i$ in $T_j$, $i=1,2$. In the case that $\Sigma[Y_j)$ contains an outer-edge $e_0$, we may select $v_0$ as the endpoint of $e_0$, and choose $\vec{e}_0$ pointing to $v_0$. Then
\[
\omega(W):=\vec{e}_0\omega(P_1)\vec{e}_1 \omega(P_0)\vec{e\;}_2^{-1}\omega(P_2^{-1})\vec{e}_0
\]
is a directed closed positive walk; see the left of Figure~\ref{Fig:d1-cases}. In the case that $\Sigma[Y_j)$ contains no outer-edges, then $\Sigma[Y_j)=\Sigma[Y_j]$ and contains a negative circle $C_0$; we may assume that $C_0$ contains the vertex $v_0$. Arrange a directed closed negative path $\omega(P_0)$ on $C_0$, having its initial and terminal arcs pointing to $v_0$. Then
\[
\omega(W):=\omega(P_0)\omega(P_1)\vec{e}_1 \omega(P_0)\vec{e\;}_2^{-1}\omega(P_2^{-1})\omega(P_0^{-1})
\]
is a directed closed positive walk; see the right of Figure~\ref{Fig:d1-cases}. In both cases, the cut $U$ and the closed walk $W$ have the only common edges $e_1$ and $e_2$. We obtain ${\bf g}(\vec{e}_1)-{\bf g}(\vec{e}_2)=\langle{\bf I}_{\omega(W)},{\bf g}\rangle=0$, that is, ${\bf g}(\vec{e}_1)={\bf g}(\vec{e}_2)$. We see that ${\bf g}=c'_j$ is constant on $\omega([X,Y_j])$.
\begin{figure}[h]
\centering
\includegraphics[height=30mm]{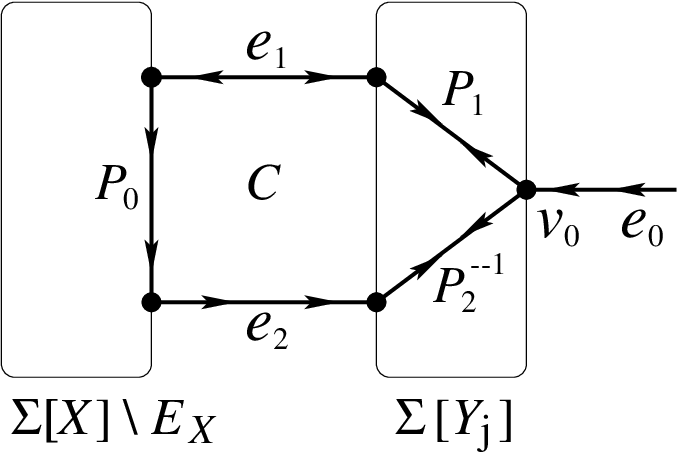}\hspace{10mm}
\includegraphics[height=30mm]{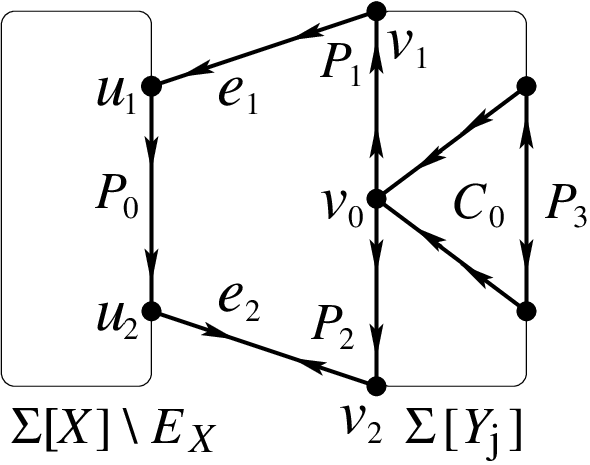}
\caption{Case (d1)}\label{Fig:d1-cases}
\end{figure}

Let $E_X\neq\varnothing$. Choose edges $e_0\in E_X,e_1\in[X,Y_j]$. If $O_{Y_j}\neq\varnothing$, choose an outer-edge $e_2\in O_{Y_j}$, there exists a circuit $C:=C_1H_1$ of pattern (C4) of Theorem~\ref{thm:circuits} with $e_0\in C_1$ and $e_1,e_2\in H_1$, contained in $\Sigma[X]\cup[X,Y_j]\cup\Sigma[Y_j)$. See the left of Figure~\ref{Fig:d2-cases}. If $O_{Y_j}=\varnothing$, then If $\Sigma[Y_j]=\Sigma[Y_j)$ contains negative circles. There exists a circuit $C:=C_1PC_2$ of pattern (C5) of Theorem~\ref{thm:circuits} with $e_0\in C_1$ and $e_1\in P$, contained in $\Sigma[X]\cup[X,Y_j]\cup\Sigma[Y_j]$. See the right of Figure~\ref{Fig:d2-cases}. In both cases, $e_1$ is a double edge of the circuit $C$.
Choose a direction $\omega(C)$ of the circuit $C$. Then $\vec{e}_0\in\omega(C)$ implies $\vec{e}_1^{\;-1}\in\omega(C)$. We obtain ${\bf g}(\vec{e}_0)-2{\bf g}(\vec{e}_1)=\langle{\bf I}_{\omega(C)},{\bf g}\rangle=0$,
that is, ${\bf g}(\vec e_0)=2{\bf g}(\vec e_1)$. So $b=2c'_j$.
\begin{figure}[h]
\centering
\includegraphics[height=30mm]{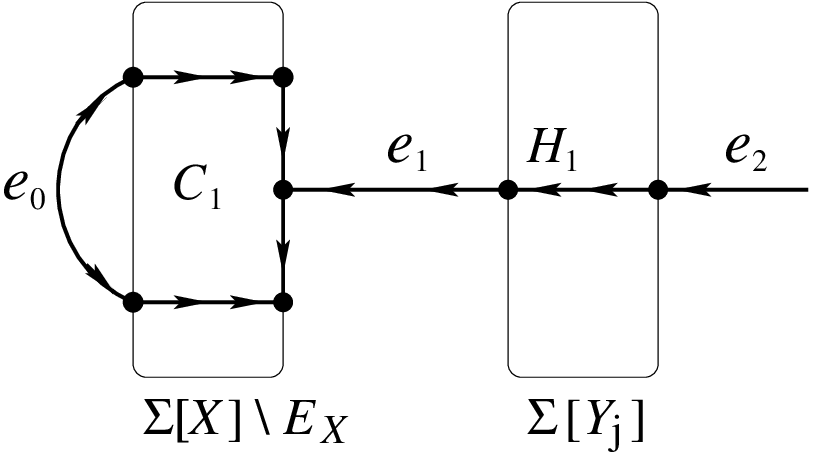}\hspace{10mm}
\includegraphics[height=30mm]{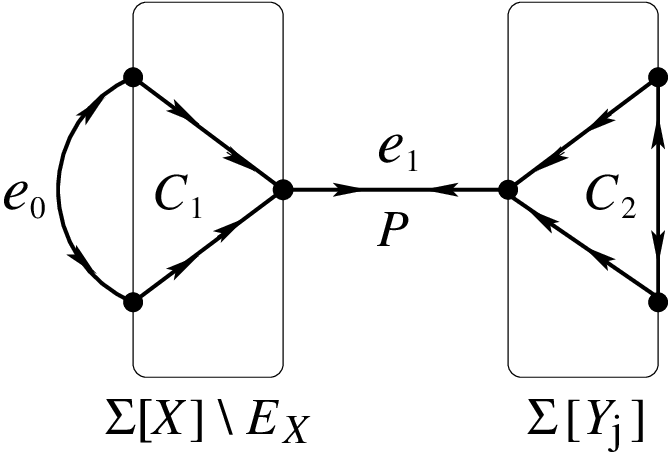}
\caption{Case (d2)}\label{Fig:d2-cases}
\end{figure}

Let $O_X\neq\varnothing$. Fix an edge $e_1\in O_X$. If $O_{Y_j}=\varnothing$, for each edge $e_2\in[X,Y_j]$ there exists a circuit $C:=C_1H_1$ of pattern (C5) of Theorem~\ref{thm:circuits}, contained in $(\Sigma[X)\smallsetminus E_X)\cup[X,Y_j]\cup\Sigma[Y_j]$, where $e_1,e_2\in H_1$. Choose a direction $\omega(C)$ of $C$. Then $\vec{e}_1\in\omega(C)$ implies $\vec{e}_2^{\;-1}\in\omega(C)$. See the left of Figure~\ref{Fig:Cases (d3) and e-cases}.
Thus $2{\bf g}(\vec{e}_1)-2{\bf g}(\vec{e}_2)=\langle{\bf I}_{\omega(C)},{\bf g}\rangle=0$. It follows that $2c=2c'_j$. If $O_{Y_j}\neq\varnothing$, choose an outer-edge $e_0\in O_{Y_j}$. For each edge $e_2\in[X,Y_j]$ there exists a circuit $C:=L$ of line pattern (C1) of Theorem~\ref{thm:circuits}, contained in $(\Sigma[X)\smallsetminus E_X)\cup[X,Y_j]\cup\Sigma[Y_j]$, where $e_0,e_1,e_2\in L$.  Choose a direction $\omega(C)$ of $C$. Then $\vec{e}_1\in\omega(C)$ implies $\vec{e}_2^{\;-1}\in\omega(C)$. Thus ${\bf g}(\vec{e}_1)-{\bf g}(\vec{e}_2)=\langle{\bf I}_{\omega(C)},{\bf g}\rangle=0$. It follows that $c=c'_j$. Likewise, if $O_{X_i}=\varnothing$ or $O_{X_j}=\varnothing$ then $2c'_i=2c'_j$. If $O_{X_i}\neq\varnothing$ or $O_{X_j}\neq\varnothing$ then $c'_i=c'_j$.

(e) Note that $\Sigma[Y_j)=\Sigma[Y_j]$ and contains no outer-edges and no negative circles. Since $(\Sigma[X]\setminus E_X)\cup\Sigma[Y_j]$ is balanced, all edges of $(\Sigma^\nu[X]\setminus E^\nu_X)\cup\Sigma^{\nu_j}[Y_j]$ are positive. The unbalance of $(\Sigma[X]\setminus E_X)\cup[X,Y_j]\cup\Sigma[Y_j]$ implies that $[X,Y_j]^{\nu\nu_j}$ is decomposed into disjoint a nonempty set $[X,Y_j]^{\nu\nu_j+}$ of positive edges and a nonempty set $[X,Y_j]^{\nu\nu_j-}$ of negative edges. Given edges $e_1,e_2\in[X,Y_j]$ such that either $e^{\nu\nu_j}_1,e^{\nu\nu_j}_2\in[X,Y_j]^{\nu\nu_j+}$ or $e^{\nu\nu_j}_1,e^{\nu\nu_j}_2\in[X,Y_j]^{\nu\nu_j-}$. Choose a circuit $C$ of pattern (C2) of Theorem~\ref{thm:circuits} with a direction $\omega(C)$ and $e_1,e_2\in C$, contained in $(\Sigma[X]\setminus E_X)\cup[X,Y_j]\cup\Sigma[Y_j]$, such that $C^{\nu\nu_j}$ is contained in
\[
\Sigma^\nu[X]\setminus E^\nu_X)\cup[X,Y_j]^{\nu\nu_j\epsilon}\cup\Sigma^{\nu_j}[Y_j], \quad\mbox{where $\epsilon=+$ or $-$.}
\]
See the left of Figure~\ref{Fig:c-case} for the former case, and the right of Figure~\ref{Fig:c-case} for the latter case. Moreover,  $\vec{e}_1^{\,\nu\nu_j}\in\omega^{\nu\nu_j}(C^{\,\nu\nu_j})$ implies $(\vec{e}_2^{\,\nu\nu_j})^{-1}\in\omega^{\nu\nu_j}(C^{\,\nu\nu_j})$. It is equivalent to saying that $\vec{e}_1\in\omega(C)$ implies $\vec{e}_2^{\;-1}\in\omega(C)$. We obtain ${\bf g}(\vec{e}_1)-{\bf g}(\vec{e}_2) =\langle{\bf I}_{\omega(C)},{\bf g}\rangle=0$. Thus ${\bf g}=c_j^+$ is constant on
\[
\omega^{\nu_j+}([X,Y_j]^\nu):=\omega(([X,Y_j]^{\nu\nu_j+})^{\nu\nu_j}),
\]
and ${\bf g}=c_j^-$ is constant on
\[
\omega^{\nu_j-}([X,Y_j]^\nu):=\omega(([X,Y_j]^{\nu\nu_j-})^{\nu\nu_j}).
\]
\begin{figure}[h]
\centering
\includegraphics[height=30mm]{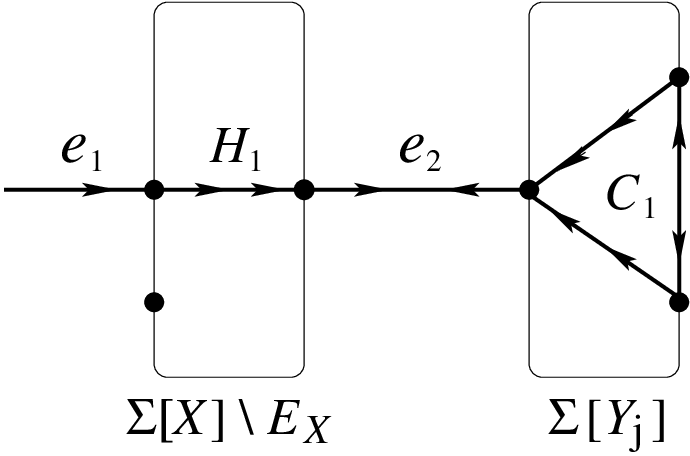}\hspace{10mm}
\includegraphics[height=30mm]{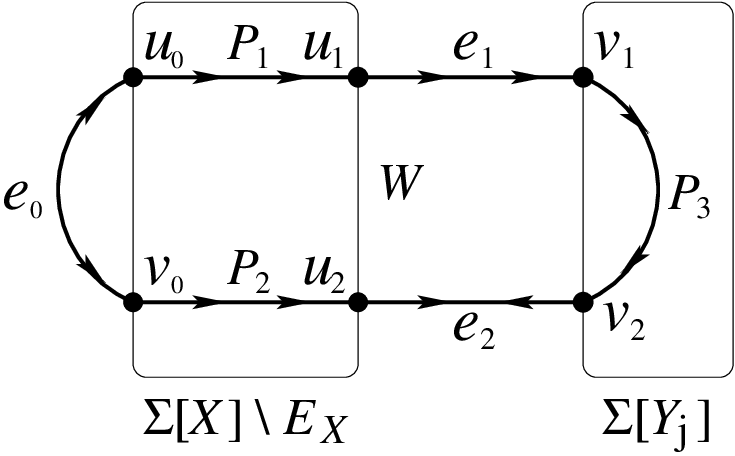}
\caption{Cases (d3) and (e)}\label{Fig:Cases (d3) and e-cases}
\end{figure}

If in addition, $E_X\neq\varnothing$, choose an edge $e_0\in E_X$ such that $\vec{e}_0^{\,\nu\nu_j}$ points to its endpoints $u_0,v_0$ in $X$; choose edges $e_1,e_2\in[X,Y_j]$ such that $\vec{e}_1^{\,\nu\nu_j},\vec{e}_2^{\,\nu\nu_j}$ points away from their endpoints $u_1,u_2$ respectively in $X$, $e_1^{\,\nu\nu_j}$ points to its point $v_1$ in $Y$ and $e_2^{\,\nu\nu_j}$ points away from its endpoint $v_2$ in $Y$. Let $P_1,P_2$ be directed paths in $\Sigma^{\,\nu\nu_j}[X]\smallsetminus E_X^{\nu\nu_j}$ from $u_0,v_0$ to $u_1,u_2$ respectively. Let $P_3$ be a directed path in the tree $T_j^{\,\nu\nu_j}$ from $v_1$ to $v_2$. Then $W:=P_1\vec{e}_1^{\,\nu\nu_j}P_3\vec{e}_2^{\,\nu\nu_j}P_2^{-1}\vec{e}_3^{\,\nu\nu_j}u_0$
is a directed closed positive walk in $\Sigma^{\nu\nu_j}$. See the right of Figure~\ref{Fig:Cases (d3) and e-cases}. It follows that $W^{\nu\nu_j}$ is a directed closed positive walk in $\Sigma$ and $\vec{e}_0,\vec{e}_1^{\;-1},\vec{e}_2^{\;-1}\in\omega(U)$.
We obtain
\[
{\bf g}(\vec{e}_0)-{\bf g}(\vec{e}_1)-{\bf g}(\vec{e}_2)=\langle{\bf I}_{\omega(W)},{\bf g}\rangle=0,
\]
that is, ${\bf g}(\vec{e}_0)={\bf g}(\vec{e}_1)+{\bf g}(\vec{e}_2)$. So $b=c_j^++c_j^-$.
\end{proof}

\begin{prop}\label{prop:bond-chain-is-elementary}
Given a tension $\bf g$ with coefficients in ${\Bbb A}$, supported on a bond 
\[
U=\delta X^\nu=[X,X^c]\cup E_X\cup O_X 
\] 
of $\Sigma$ with a direction $\omega(U)$. If $U=E_X$, then ${\bf g}=c{\bf 1}_{\omega(E_X)}$. Otherwise, let $\Sigma[Y_j]$ $(j\in J$) be components of $\Sigma[X^c]$ connected to $\Sigma[X]$. Then
\begin{equation}\label{eq:g-bond-formula}
{\bf g}=2c\cdot{\bf 1}_{\omega(E_X)}+
c\cdot{\bf 1}_{\omega(O_X)}+
\sum_{i\in I}c\cdot{\bf 1}_{\omega([X,Y_i])}
+\sum_{j\in J\smallsetminus I} c_j\cdot{\bf 1}_{\omega([X,Y_j])},
\end{equation}
where $I=\{i\in J: O_{Y_i}\neq\varnothing\}$, $c_j=c+a_j$ and $a_j\in{\rm Tor}_2({\Bbb A})$ for $j\in J\smallsetminus I$.
\end{prop}
\begin{proof}
According to Lemma~\eqref{Lem:Tension-support-cut-lemma}, we have that
${\bf g}=b$ is constant on $\omega(E_X)$, ${\bf g}=c$ is constant on $\omega(O_X)$, and ${\bf g}=c_j$ is constant on $\omega([X,Y_j]$, $j\in J$.
Moreover, $c=c_i=c_j$ for $i,j\in I$ and $b=2c=2c_j$ for $j\in J\smallsetminus I$. The condition $2c=2c_j$ for $j\in J\smallsetminus I$ implies that $c_j=c+a_j$ for each member $a_j\in{\rm Tor}_2({\Bbb A})$. The expression \eqref{eq:g-bond-formula} follows immediately.
\end{proof}

\begin{thm}[Characterization of elementary tensions]\label{thm:support-elementary}
Let $\Sigma$ be connected and unbalanced, and $U$ be an edge
subset of $\Sigma$. Then $U$ is a bond of $\Sigma$ if and only if $U$ is the
support of an elementary integer-valued tension of $\Sigma$.
\end{thm}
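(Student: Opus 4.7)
The plan is to prove both implications by leveraging structural results already established in the paper, in particular Propositions~\ref{Prop:Duality-and-othogonality} and~\ref{prop:bond-chain-is-elementary}, together with Theorem~\ref{thm:bond}(c). The forward direction will exploit the vanishing of ${\rm Tor}_2({\Bbb Z})$, while the backward direction will hinge on a rationality-and-scaling argument to upgrade elementarity from ${\Bbb Z}$ to ${\Bbb R}$.

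For the forward implication, I would take the cut chain ${\bf I}_{\omega(U)}=\delta(\nu\cdot{\bf 1}_X)$ of \eqref{eq:cut-chain} as the candidate elementary tension. It is a nonzero integer coboundary, hence lies in ${\rm T}(\Sigma,{\Bbb Z})$ by Lemma~\ref{Lem:Potential-is-tension}, and has support exactly $U$. To verify elementarity I would take any nonzero ${\bf g}'\in{\rm T}(\Sigma,{\Bbb Z})$ with $\supp{\bf g}'\subseteq U$ and invoke Proposition~\ref{prop:bond-chain-is-elementary} with ${\Bbb A}={\Bbb Z}$: because ${\rm Tor}_2({\Bbb Z})=0$, every torsion correction $a_j$ in formula \eqref{eq:g-bond-formula} vanishes, forcing $c'_j=c$. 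The expression then collapses to a nonzero integer multiple of ${\bf I}_{\omega(U)}$, whose support is necessarily all of $U$.

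For the backward implication, let ${\bf g}$ be elementary in ${\rm T}(\Sigma,{\Bbb Z})$ with support $U$. The first step is to upgrade ${\Bbb Z}$-elementarity to ${\Bbb R}$-minimality, i.e., to show that $U$ is a minimal member of $\mathscr{D}({\rm T}(\Sigma,{\Bbb R}))$. Since ${\rm F}(\Sigma,{\Bbb R})=\ker\partial$ is the kernel of an integer matrix, its orthogonal complement ${\rm T}(\Sigma,{\Bbb R})$ is a rational subspace of ${\rm C}_1(\Sigma,{\Bbb R})$. For each $e_0\in U$, the further subspace
\[
W_{e_0}:=\{{\bf h}\in{\rm T}(\Sigma,{\Bbb R}):{\bf h}(\vec{e}\,)=0\ \text{for all}\ e\in(E\setminus U)\cup\{e_0\}\}
\]
is rational as well; if $W_{e_0}$ were nonzero, clearing denominators in a rational basis vector would produce a nonzero integer tension supported in $U\setminus\{e_0\}\subsetneq U$, contradicting elementarity of ${\bf g}$. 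Hence $W_{e_0}=\{0\}$ for every $e_0\in U$, establishing minimality of $U$ over ${\Bbb R}$.

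Proposition~\ref{Prop:Duality-and-othogonality} applied with ${\rm V}={\rm F}(\Sigma,{\Bbb R})$ then identifies $U$ as a co-circuit of $\mathcal{M}(\Sigma)$. Combining the matroid-theoretic characterization of co-circuits with the rank identity $r(S)=|V|-b(S)$ of \eqref{eq:rank and cycle-rank} translates this into
\[
b(\Sigma\setminus U)=b(\Sigma)+1 \quad \text{and} \quad b((\Sigma\setminus U)\cup e)=b(\Sigma)\ \text{for every}\ e\in U,
\]
which are exactly the conditions \eqref{Thm:balance-component+1}; Theorem~\ref{thm:bond}(c) then yields that $U$ is a bond of $\Sigma$. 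The main obstacle I foresee is the rationality-and-scaling step in the backward direction: one must argue cleanly that the subspace of real tensions supported strictly inside $U$ is genuinely rational, so that its being nonzero forces the existence of integer points, thereby bridging ${\Bbb Z}$-elementarity to ${\Bbb R}$-minimality.
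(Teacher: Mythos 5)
Your proof is correct, and your forward direction is essentially the paper's own: both take the cut chain ${\bf I}_{\omega(U)}=\delta(\nu\cdot{\bf 1}_X)$ and apply Proposition~\ref{prop:bond-chain-is-elementary} with ${\Bbb A}={\Bbb Z}$, where ${\rm Tor}_2({\Bbb Z})=0$ collapses \eqref{eq:g-bond-formula} to an integer multiple of ${\bf I}_{\omega(U)}$. The backward direction, however, is genuinely different. The paper argues by hand: pairing the elementary tension ${\bf g}$ against circuit flows ${\bf I}_{\omega(C)}$ built from the patterns of Theorem~\ref{thm:circuits}, it shows that no edge of $U$ joins unbalanced components of $\Sigma\setminus U$, that $\Sigma\setminus U$ has exactly one balanced component $\Sigma[X]\setminus U$ (ruling out zero and ruling out two or more), and finally that the bond $U_0=[X,X^c]\cup E_X\cup O_X$ lies inside $U$, so minimality of the support forces $U=U_0$. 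You instead lift ${\Bbb Z}$-elementarity to ${\Bbb R}$-minimality by rationality --- and that step is sound: ${\rm T}(\Sigma,{\Bbb R})$ is the orthogonal complement of the kernel of an integer matrix, hence a rational subspace, intersecting with coordinate hyperplanes preserves rationality, so a nonzero $W_{e_0}$ would contain a nonzero integer tension contradicting elementarity --- then invoke Proposition~\ref{Prop:Duality-and-othogonality} to recognize $U$ as a cocircuit of $\mathcal{M}(\Sigma)$, and translate the cocircuit property into \eqref{Thm:balance-component+1} via the identity $r(S)=|V|-b(S)$, finishing with Theorem~\ref{thm:bond}(c). Your route is shorter and arguably truer to the paper's guiding principle (bond $=$ minimal support of a tension, dual to circuit $=$ minimal support of a flow). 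What it costs is a dependence on facts proved elsewhere: the identification of the combinatorial function $r(S)=|V|-b(S)$ of \eqref{eq:rank and cycle-rank} with the genuine matroid rank of $\mathcal{M}(\Sigma)$, which the paper asserts but never proves (it is classical, due to Zaslavsky), plus the standard matroid facts that cocircuits are exactly the minimal sets whose deletion lowers rank and that rank has the unit-increase property (which is what yields $b(\Sigma\setminus U)=b(\Sigma)+1$ exactly, not merely $\geq$). The paper's longer argument is self-contained in results it proves and, as a by-product, exhibits the explicit component structure of $\Sigma\setminus U$; yours delegates that structure to matroid duality.
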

\begin{proof}
Let $U$ be a bond of $\Sigma$ with a direction $\omega(U)$. The cut chain ${\bf I}_{\omega(U)}$, defined by \eqref{eq:cut-chain}, is an integer-valued tension of $\Sigma$. For each $\bf h$ nonzero tension of ${\rm T}(\Sigma,{\Bbb Z})$ such that $\supp{\bf h}\subset U$, we have ${\bf h}=c\cdot{\bf I}_{\omega(U)}$ by Proposition~\ref{prop:bond-chain-is-elementary}. By definition of elementary chain, the cut chain ${\bf I}_{\omega(U)}$ is an elementary tension.

Conversely, let $U=\supp {\bf g}$, where $\bf g$ is an elementary integer-valued tension of $\Sigma$. We first claim that no edges of $U$ connect two (possibly identical) unbalanced components of $\Sigma\setminus U$. Suppose it is not true, that is, $\Sigma\setminus U$ contains two unbalanced components $\Sigma_1,\Sigma_2$ (possibly identical) connected by an edge $e_0\in U$ with endpoints $v_1\in\Sigma_1$ and $v_2\in\Sigma_2$. We claim that $\Sigma_1\cup\Sigma_2\cup e_0$ contains a circuit $C$ with $e_0\in C$.

In fact, if $\Sigma_i$ $(i=1,2)$ contains an outer-edge $e_i$ with endpoint $u_i$, choose a shortest path $P_i$ from $u_i$ to $v_i$ in $\Sigma_i$, then the circuit $C$ is the open line $e_1P_1e_0P_2^{-1}e_2$. If $\Sigma_i$ $(i=1,2)$ contains negative circles $C_i$, choose a shortest path $P_i$ from $C_i$ to the vertex $v_i$ in $\Sigma_i$, then $C$ is the circuit $C_1P_1e_0P_2^{-1}C_2$ in pattern (C5) of Theorem \ref{thm:circuits}. If $\Sigma_1$ contains a negative circle $C_1$ and $\Sigma_2$ contains an outer-edge $e_2$ with endpoint $u_2$, choose a shortest path $P_1$ from $C_1$ to $v_1$ and a shortest path $P_2$ from $v_2$ to $u_2$, then $C$ is the circuit $C_1P_1e_0{P_2}e_2$ in pattern (C4) of Theorem \ref{thm:circuits}. Now choose a direction $\omega(C)$ of $C$ and orientation $\vec{e}_0\in\omega(C)$. Then
\[
0=\langle{\bf I}_{\omega(C)},{\bf g}\rangle=\left\{
\begin{array}{rl}
{\bf g}(\vec{e}_0)  & \mbox{if $C$ is in line pattern (C1),}\\
2{\bf g}(\vec{e}_0) & \mbox{if $C$ is in pattern of (C4),(C5).}
\end{array}\right.
\]
Being zero for $\bf g$ at the edge $e_0\in U$ is contradictory to that $\bf g$ is nowhere-zero on $U$.

Let $\Sigma_1=\Sigma_2$ and the edge $e_0\in U$ be a loop or link with endpoints $v_1,v_2$ in $\Sigma_1$. Choose a spanning tree $T_1$ of $\Sigma_1$. If $\Sigma_1$ contains an outer-edge $e_1$ with endpoint $u_1$, then $B_1:=T_1\cup e_1$ is a base of $\Sigma_1$, and $B_1\cup e_0$ contains a unique circuit $C$ in pattern (C4) of Theorem \ref{thm:circuits}, with $e_0\in C$. If $\Sigma_1$ contains no outer-edges, then its unbalance forces that there exists an edge $e_1$ of $\Sigma_1$ such that $T_1\cup e_1$ contains a unique negative circle $C_1$. Then $B_1:=T_1\cup e_1$ is base of $\Sigma_1$, and $B_1\cup e_0$ contains a unique circuit $C$ with $e_0\in C$, and in pattern (C2), (C3) and (C5) of Theorem \ref{thm:circuits}. Note that $e_0$ is a single edge of the circuit $C$. Choose a direction $\omega(C)$ of $C$ with $\vec{e}_0\in\omega(C)$. We obtain ${\bf g}(\vec{e}\,)=\langle{\bf I}_{\omega(C)},{\bf g}\rangle=0$, contradictory to that $\bf g$ is nowhere-zero on $U$.

We have shown that there are no edges of $U$, connecting one unbalanced component of $\Sigma\setminus U$ to itself or to another unbalanced component of $\Sigma\setminus U$. Next we claim that $\Sigma\setminus U$ contains exactly one balanced component.

Suppose $\Sigma\setminus U$ contains no balanced components. Since $\Sigma$ is connected, the unbalanced components of $\Sigma\setminus U$ must be connected by edges of $U$. It follows that $\Sigma\setminus U$ is connected and unbalanced. Choose a spanning tree $T$ of $\Sigma\setminus U$. Choose an outer-edge $e_1$ of $\Sigma\setminus U$ if outer-edges exist; otherwise, choose an edge $e_1$ of $\Sigma\setminus U$ such that $T\cup e_1$ contains a negative circle $C_1$. Then $B_1:=T\cup e_1$ is a base of $\Sigma\setminus U$. Thus  for each edge $e\in U$, $B_1\cup e$ contains a unique circuit $C$. Choose a direction $\omega(C)$ of $C$ and consider the arc $\vec{e}\in\omega(C)$. We obtain ${\bf g}(\vec{e}\,)=\langle{\bf I}_{\omega(C)},{\bf g}\rangle=0$, contradictory to that ${\bf g}$ is nowhere-zero on $U$.

Suppose $\Sigma\smallsetminus U$ contains two or more balanced components. Given $\Sigma_1,\Sigma_2$ two distinct balanced components of $\Sigma\smallsetminus U$. Set $X_i=V(\Sigma_i)$; let $E_{X_i}$ denote the smallest subset of $\Sigma[X_i]\cap U$ such that $\Sigma[X_i]\smallsetminus E_{X_i}$ is balanced; and define
\[
U_i=[X_i,{X_i}^c]\cup E_{X_i}\cup O_{X_i},\quad i=1,2.
\]
Then $U_i$ is a cut of $\Sigma$, and $U_i=\supp{\bf I}_{\omega(U_i)}$, where $\omega(U_i)$ is a direction of the cut $U_i$, and ${\bf I}_{\omega(U_i)}$ is the cut chain defined by \eqref{eq:cut-chain}. Note that $U_i\subset U$ by definition of $U_i$. It forces that $U_i=U$, for $U$ is a minimal support of nonzero tension, $i=1,2$. Since $U_1=U_2$, it forces that $U=[X_1,X_2]$ and $\Sigma=\Sigma_1\cup\Sigma_2\cup U$. We may assume that all edges of $\Sigma_1,\Sigma_2$ are positive.
Since $\Sigma$ is unbalanced, it forces that $[X_1,X_2]$ contains both positive edges and negative edges. Then $U^+:=[X_1,X_2]^+$ and $U^-:=[X_1,X_2]^-$ are bonds of $\Sigma$, and both are contained in $U$. Thus $U^+=\supp{\rm I}_{\omega(U^+)}$ and $U^-=\supp{\rm I}_{\omega(U^-)}$, where $\omega(U^+)$ and $\omega(U^-)$ are directions of bonds $U^+$ and $U^-$ respectively. Since $U^+\subset U$ and $U^-\subset U$, by minimality of $U$ it forces that $U=U^+=U^-$, which is a contradiction.

Now we have shown that $\Sigma\setminus U$ contains exactly one balanced component $\Sigma_0$ with vertex set $X$, and all other components are unbalanced. Denote by $E_X$ the smallest edge subset of $\Sigma[X]\cap U$ such that $\Sigma[X]\setminus E_X$ is balanced. Then $U_0:=[X,X^c]\cup E_X\cup O_X$ is a bond of $\Sigma$, and $U_0\subseteq U$. Since $U=\supp{\bf g}$ with $\bf g$ elementary tension and $U_0$ is the support of the bond chain ${\bf I}_{\omega(U_0)}$, the minimality of $U$ implies $U=U_0$, which is a bond of $\Sigma$.
\end{proof}

\section{Flow group, boundary group, and homology group}

\subsection{Canonical bases of signed graphs}

Let $\Sigma$ be decomposed into connected components $\Sigma_i=(V_i,E_i,\sigma_i)$. For each $\Sigma_i$, choose $T_i$ a spanning tree of the connected compact signed subgraph
\[
\Sigma[V_i]=\Sigma_i\smallsetminus\{\mbox{outer-edges}\}.
\]
If $\Sigma_i$ contains outer-edges, choose an outer-edge $e_i$. If $\Sigma_i$ is unbalanced and contains no outer-edges, choose an edge $e_i$ such that the unique circle contained in $T_i\cup e_i$ is negative. We construct a base $B_i$ for each component $\Sigma_i$ as follows
\begin{align}\label{canonical-base}
B_i:&=\left\{\begin{array}{ll}
T_i   & \mbox{if $\Sigma_i$ is balanced,}\\
T_i\cup e_i & \mbox{if $\Sigma_i$ is unbalanced.}
\end{array}\right.
\end{align}
The base $B:\,=\bigcup_iB_i$ of $\Sigma$ is called a {\em canonical base} of $\Sigma$.

Given $B$ a canonical base of $\Sigma$. The components of $B$ are in one-to-one correspondence with the components of $\Sigma$. More precisely, balanced components correspond to balanced components; components with outer-edges correspond to components with outer-edges; and unbalanced components without outer-edges correspond to unbalanced components without outer-edges.

Let $B^c:=E\setminus B$ denote the complement of $B$. For each edge $e\in B^c$, there exists a unique signed graph circuit $C(B,e)$ contained in $B\cup e$, and $e$ is an edge of $C(B,e)$ and $C(B,e)\cap B^c=\{e\}$. Choose a direction $\omega(C(B,e))$ of the circuit $C(B,e)$ and an orientation $\vec{e}\in\omega(B(C,e))$. We obtain an integer-valued circuit flow
\[
{\bf I}_{\omega(C(B,e))},
\]
defined by \eqref{eq:circuit flow}. For any two edges $e,e'\in B^c$, we have
\begin{equation}\label{eq:circuit-flow-Kronecker delta}
{\bf I}_{\omega(C(B,e))}(\vec{e}{\,'})=\delta_{e,e'}
=\left\{\begin{array}{ll}
1 & \mbox{if $e=e'$,}\\
0 & \mbox{if $e\neq e'$.}
\end{array}\right.
\end{equation}
For each $C_{neg}$ negative circle contained in the canonical base $B$, choose an orientation $\omega(C_{neg})$ to generate an indicator $1$-chain ${\bf 1}_{\omega(C_{neg})}$. According to Lemma \ref{lem:negative-circle-flow}, the 1-chain
\[
a\cdot {\bf 1}_{\omega(C_{neg})}
\]
is a flow of $\Sigma$ with coefficients in ${\Bbb A}$ if $a\in{\rm Tor}_2({\Bbb A})$, and is not a flow if $a\not\in{\rm Tor}_2({\Bbb A})$.

\subsection{Flow, boundary, and homology groups}

\begin{thm}[Characterization of the flow group]\label{thm:flow-group-structure}
Let $B$ be a canonical base of $\Sigma$. Then
\begin{align}\label{eq:flow-group-structure}
{\rm F}(\Sigma,{\Bbb A})
&=\Big(\bigoplus_{\rho}{\rm Tor}_2({\Bbb A})
\cdot {\bm 1}_{\omega(C_\rho)}\Big)
\oplus \Big(\bigoplus_{e\in B^c}{\Bbb A}
\cdot {\bf I}_{\omega(C_e)}\Big)\\
&\cong ({\rm Tor}_2{\Bbb A})^{u_c(\Sigma)}\oplus{\Bbb A}^{{\rm cr}(\Sigma)},
\end{align}
where $C_\rho$ ranges over all negative circles contained in $B$ and ${\rm cr}(\Sigma)=|B^c|$.
\end{thm}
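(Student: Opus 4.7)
The plan is to decompose the proof into two main parts: a spanning step (every flow lies in the claimed sum) and an independence step (the sum is direct), after which the isomorphism to $({\rm Tor}_2{\Bbb A})^{u_c(\Sigma)}\oplus{\Bbb A}^{{\rm cr}(\Sigma)}$ follows by counting generators.

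For spanning, given ${\bf f}\in{\rm F}(\Sigma,{\Bbb A})$, I would define $a_e:={\bf f}(\vec{e\,})$ for each $e\in B^c$ (with $\vec{e}$ the orientation chosen in $\omega(C(B,e))$) and form
\[
{\bf f}':={\bf f}-\sum_{e\in B^c}a_e\cdot{\bf I}_{\omega(C(B,e))}.
\]
By the Kronecker delta property \eqref{eq:circuit-flow-Kronecker delta}, ${\bf f}'$ vanishes on every edge of $B^c$, so $\supp{\bf f}'\subseteq B$. Since ${\bf f}'$ is a difference of flows it remains a flow, and its restriction to each component $\Sigma_i$ is a flow supported on $B_i$.

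The crux of the spanning step is then to classify flows supported on a canonical base of a connected signed graph by a leaf-peeling argument. If $\Sigma_i$ is balanced then $B_i=T_i$ is a tree containing no loops. Pick any leaf $v$ of $T_i$; the unique tree edge $e$ at $v$ is a link, so $[v,\vec{e}\,]=\pm1$, and the boundary condition $(\partial{\bf f}')(v)=[v,\vec{e}\,]{\bf f}'(\vec{e}\,)=0$ forces ${\bf f}'(\vec{e}\,)=0$; delete $v,e$ and iterate to conclude ${\bf f}'|_{B_i}=0$. If $\Sigma_i$ is unbalanced with an outer-edge, the same argument applies to any leaf of $T_i$ distinct from the endpoint of $e_i$, and at the last stage the outer-edge itself contributes $\pm{\bf f}'(\vec{e_i})=0$ from its single endpoint, again giving ${\bf f}'|_{B_i}=0$. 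If $\Sigma_i$ is compact unbalanced, $B_i=T_i\cup e_i$ contains a unique negative circle $C_\rho$; peel off leaves of $T_i\cup e_i$ not lying on $C_\rho$ until only $C_\rho$ remains, and apply Lemma~\ref{lem:negative-circle-flow} to conclude ${\bf f}'|_{B_i}=a_\rho\cdot{\bf 1}_{\omega(C_\rho)}$ with $a_\rho\in{\rm Tor}_2({\Bbb A})$. Summing over components gives the claimed expression for $\bf f$.

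For independence, suppose
\[
\sum_{e\in B^c}a_e\cdot{\bf I}_{\omega(C(B,e))}+\sum_\rho a_\rho\cdot{\bf 1}_{\omega(C_\rho)}=0.
\]
Evaluating at $\vec{e\,}$ for $e\in B^c$ and applying \eqref{eq:circuit-flow-Kronecker delta} yields $a_e=0$ (the second sum contributes nothing since each $C_\rho\subseteq B$). The equation reduces to $\sum_\rho a_\rho\cdot{\bf 1}_{\omega(C_\rho)}=0$, and because distinct $C_\rho$ sit in distinct compact unbalanced components their supports are disjoint, forcing each $a_\rho=0$. The main obstacle I expect is bookkeeping in the leaf-peeling step for the outer-edge case (Case~2), where one must ensure that induction correctly handles the possibility that the endpoint of $e_i$ becomes a leaf after deletions; once that is handled cleanly, the isomorphism follows because there are exactly $u_c(\Sigma)$ negative circles $C_\rho$ (one per compact unbalanced component) and $|B^c|={\rm cr}(\Sigma)$ generators in $B^c$.
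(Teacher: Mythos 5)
Your proposal is correct and follows essentially the same route as the paper's proof: the same subtraction ${\bf f}'={\bf f}-\sum_{e\in B^c}{\bf f}(\vec{e\,})\cdot{\bf I}_{\omega(C(B,e))}$ using the Kronecker delta property \eqref{eq:circuit-flow-Kronecker delta}, the same leaf-peeling argument on the canonical base (with the outer-edge handled last and the compact unbalanced case reduced to Lemma~\ref{lem:negative-circle-flow}), and the same evaluation argument for directness. The only difference is organizational: you argue component-by-component inside one decomposition, while the paper first reduces to connected $\Sigma$ and then splits into the two cases.
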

\begin{proof}
Since ${\rm F}(\Sigma,{\Bbb A})=\bigoplus_i{\rm F}(\Sigma_i,{\Bbb A})$, where $\Sigma_i$ ranges over the components of $\Sigma$, it is enough to show \eqref{eq:flow-group-structure} for each component of $\Sigma$. We may assume that $\Sigma$ itself is connected. First of all, the right-hand side of \eqref{eq:flow-group-structure} is clearly contained in the left-hand side, since the chains, $a\cdot{\bf 1}_{\omega(C_\sigma)}$ with $a\in{\rm Tor}_2({\Bbb A})$ and $b\cdot{\bf I}_{\omega(C_e)}$ with $b\in{\Bbb A}$, are flows of $\Sigma$ with coefficients in $\Bbb A$, according to Lemma~\ref{lem:negative-circle-flow} and circuit flows \eqref{eq:circuit flow}. It is enough to show that each member of the left-hand side of \eqref{eq:flow-group-structure} is contained in the right-hand side, and to verify the direct sum.

Let $\bf f$ be a flow of $\Sigma$ with coefficient in $\Bbb A$. Consider the flow ${\bf f}':={\bf f}-{\bf f}_0$, where
\begin{equation}\label{eq:flow-difference}
{\bf f}_0:=\sum_{e\in B^c}{\bf f}(\vec{e\,})\cdot{\bf I}_{\omega(C(B,e))}, \quad \vec{e}\in\omega(C(B,e)).
\end{equation}
For each $e'\in B^c$ with $\vec{e}{\,'}\in\omega(C(B,e'))$, we have ${\bf f}_0(\vec{e}{\,'})={\bf f}(\vec{e}{\,'})$ by \eqref{eq:flow-difference}; consequently, ${\bf f}'(\vec{e}{\,'})=0$. We see that ${\bf f}'$ vanishes on $B^c$.

{\em Case} 1: {\em $\Sigma$ contains outer-edges or $\Sigma$ is balanced.}

Notice that the base $B$ has the form $B=T\cup e_1$ for the former case and $B=T$ for the latter case, where $T$ is a spanning tree of $\Sigma$ and $e_1$ is an outer-edge attached to $T$. The latter case can be viewed as the former case without $e_1$. We claim that ${\bf f}'$ vanishes on $B=T\cup e_1$.

If the tree $T$ contains edges, then there exist at least two leaves of $T$, one of the leaves, say $u$, must be distinct from the endpoint $v$ of the outer-edge $e_1$. Let $e_u$ denote the unique edge of $T$ at the leaf $u$. Since ${\bf f}'$ is a flow vanishing on $B^c$, we see that ${\bf f}'$ vanishes on all edges at $u$, except the edge $e_u$. Then ${\bf f}'(\vec{e}_u\,)=\partial{\bf f}'(u)=0$. Now consider $B':=B\smallsetminus ue_u$, which is a tree attached with the outer-edge $e_1$. Note that ${\bf f}'$ vanishes on ${B'}^c$. Again, if $B'$ still contains edges other than $e_1$, that is, $T':=T\smallsetminus ue_u$ contains edges, then there exists at least one leaf $u_1$ of $T'$ other than $v$. Let $e_{u_1}$ denote the unique edge of $T'$ at the leaf $u_1$. Likewise, ${\bf f}'$ vanishes on all edges at $u_1$, except $e_{u_1}$; then ${\bf f}'(\vec{e}_{u_1})=\partial{\bf f}'(u_1)=0$. Continue this procedure; we see that ${\bf f}'$ vanishes on all edges of $T$. Finally, ${\bf f}'$ vanishes on all edges at $v$, except $e_1$; then ${\bf f}'(\vec{e}_1)=\partial{\bf f}'(v)=0$. We have shown that ${\bf f}'$ vanishes on $B$. Thus ${\bf f}'={\bf 0}$; this means that the flow $\bf f$ belongs to the right-hand side of \eqref{eq:flow-group-structure}.

We are left to show that \eqref{eq:flow-group-structure} is a direct sum. Let the zero flow $\bf 0$ be written as
\[
{\bf 0}=\sum_{e\in B^c}a_e\cdot{\bf I}_{\omega(C(B,e))}.
\]
For each edge $e'\in B^c$, we evaluate both sides at $e'$. Recall ${\bf I}_{\omega(C(B,e))}(\vec{e}^{\,\prime})=\delta_{e,e'}$ for $e,e'\in B^c$ with $\vec{e}{\,'}\in\omega(C(B,e'))$. We obtain
\[
0=\sum_{e\in B^c}a_e\cdot{\bf I}_{\omega(C(B,e))}(\vec{e}{\,'})=a_{e'}.
\]
This means that \eqref{eq:flow-group-structure} is indeed a direct sum.

{\em Case} 2: {\em $\Sigma$ contains negative circles but no outer-edges.}

Notice that $B=T\cup e_0$, where $T$ is a spanning tree and $e_0$ is an edge such that $T\cup e_0$ contains a unique negative circle $C_0$. We claim that ${\bf f}'$ vanishes on $B\setminus C_0$. Recall that $B$ is the negative circle $C_0$ attached with some trees. If $B\setminus C_0$ contains edges, then $B$ contains at least one leaf $u$ not on the circle $C_0$, having exactly one edge $e_u$ at $u$ in $B$. Note that ${\bf f}'$ vanishes on all edges at $u$, except the edge $e_u$. Then ${\bf f}'(\vec{e}_u)=(\partial{\bf f}')(u)=0$. Now consider $B':=B\smallsetminus ue_u$. If $B'\smallsetminus C_0$ still contains edges, then there exists at least one leaf $u_1$ not on $C_0$, having exactly one edge $e_{u_1}$ at $u_1$ in $B'$. Note that ${\bf f}'$ vanishes on all edges at $u_1$, except $e_{u_1}$. Then ${\bf f}'(\vec{e}_{u_1})=(\partial{\bf f}')(u_1)=0$. Continue this procedure; we see that ${\bf f}'$ vanishes on $B\setminus C_0$. It turds out that ${\bf f}'$ is a flow supported on the negative circle $C_0$. By Lemma~\ref{lem:negative-circle-flow}, ${\bf f}'=a\cdot{\bf 1}_{\omega(C_0)}$ with $a\in{\rm Tor}_2({\Bbb A})$. We have seen that each flow $\bf f$ can be written as
\[
{\bf f}=a\cdot{\bf 1}_{\omega(C_0)}+\sum_{e\in B^c}{\bf f}(\vec{e\,})\cdot{\bf I}_{\omega(C(B,e))},
\]
which is clearly a member in the right-hand side of \eqref{eq:flow-group-structure}.

To see that the right-hand side of \eqref{eq:flow-group-structure} is a direct sum, consider the zero flow $\bm 0$ written in the form
\[
{\bf 0}=a_0\cdot{\bf 1}_{\omega(C_0)}+
\sum_{e\in B^c}a_e\cdot{\bf I}_{\omega(C(B,e))},
\]
where $a_0\in{\rm Tor}_2({\Bbb A})$ and $a_e\in{\Bbb A}$. For each edge $e'$ of $B^c$ and evaluate both sides at the arc $\vec{e}{\,'}\in\omega(C(B,e'))$. Since ${\bf I}_{\omega(C(B,e))}(\vec{e}{\,'})=\delta_{e,e'}$, we obtain $a_{e'}=0$. It follows that $a_0\cdot{\bm 1}_{\omega(C_0)}=\bf 0$. Of course, $a_0=0$. This means that the right-hand side of \eqref{eq:flow-group-structure} is a direct sum.
\end{proof}

Notice that ${\rm F}(\Sigma,{\Bbb A})$ is not necessarily isomorphic to the tensor product ${\rm F}(\Sigma,{\Bbb Z})\otimes{\Bbb A}$. In fact, if $\Sigma$ contains no negative circles, it is indeed that ${\rm F}(\Sigma,{\Bbb A})\cong {\rm F}(\Sigma,{\Bbb Z})\otimes{\Bbb A}$. However, if $\Sigma$ is connected, unbalanced and contains no outer-edges, we have
\[
{\rm F}(\Sigma,{\Bbb A})\cong{\rm Tor}_2({\Bbb A})\oplus
\big({\rm F}(\Sigma,{\Bbb Z}) \otimes{\Bbb A}\big).
\]

\begin{thm}[Characterization of the boundary group]\label{thm:boundary-group}
\begin{equation}\label{eq:boundary group}
{\rm B}_0(\Sigma,{\Bbb A})\cong (2{\Bbb A})^{u_c(\Sigma)}\oplus{\Bbb A}^{r(\Sigma)-u_c(\Sigma)}.
\end{equation}
More specifically, when $\Sigma$ is connected, we have the following concrete expressions.
\begin{enumerate}[\rm (a)]
\item
If $\Sigma$ is balanced, with a switching $\nu$ such that all edges of $\Sigma^\nu$ are positive, then
\begin{align*}
{\rm B}_0(\Sigma,{\Bbb A})
&= \Big{\{\bf b}\in {\rm C}_0(\Sigma,{\Bbb A}):
\sum_{v\in V}\nu(v){\bf b}(v)=0\Big\}\\
&\cong {\Bbb A}^{|V|-1}.
\end{align*}

\item
If $\Sigma$ is unbalanced and contains no outer-edges, then
\begin{align*}
{\rm B}_0(\Sigma,{\Bbb A})
&=\Big{\{\bf b}\in {\rm C}_0(\Sigma,{\Bbb A}):
\sum_{v\in V}\nu(v){\bf b}(v)\in 2{\Bbb A}\Big\}\\
&\cong 2{\Bbb A}\oplus {\Bbb A}^{|V|-1}.
\end{align*}

\item
If $\Sigma$ contains outer-edges, then ${\rm B}_0(\Sigma,{\Bbb A})={\rm C}_0(\Sigma,{\Bbb A})\cong{\Bbb A}^{|V|}$.
\end{enumerate}
\end{thm}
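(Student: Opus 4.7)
The plan is to first reduce to the connected case. Since every edge of $\Sigma$ has its endpoints in a single component, the boundary operator $\partial$ respects the component decomposition, giving ${\rm B}_0(\Sigma,{\Bbb A})=\bigoplus_i {\rm B}_0(\Sigma_i,{\Bbb A})$. Hence it suffices to prove (a), (b), (c) for connected $\Sigma$, and then obtain the general formula \eqref{eq:boundary group} by counting: each balanced component contributes ${\Bbb A}^{|V_i|-1}$, each unbalanced compact component contributes $2{\Bbb A}\oplus{\Bbb A}^{|V_i|-1}$, and each non-compact component contributes ${\Bbb A}^{|V_i|}$; summing gives $|V|-b(\Sigma)-u_c(\Sigma)=r(\Sigma)-u_c(\Sigma)$ copies of ${\Bbb A}$ and $u_c(\Sigma)$ copies of $2{\Bbb A}$.

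To handle the three connected cases I would systematically use the switching-commutation identity $\partial({\bf c}^\nu)=(\partial{\bf c})^\nu$ proved earlier. This shows that the map ${\bf b}\mapsto{\bf b}^\nu$ is an isomorphism ${\rm B}_0(\Sigma,{\Bbb A})\xrightarrow{\sim}{\rm B}_0(\Sigma^\nu,{\Bbb A})$, so I am free to replace $\Sigma$ by any switching $\Sigma^\nu$ while computing the image, and then pull back the description via $\nu(v){\bf b}(v)$. For case (a), pick $\nu$ with $\Sigma^\nu$ all-positive; working in the ordinary graph $\Sigma^\nu$ and using a spanning tree $T$, the boundaries $\partial\vec{e}=v-u$ of the tree arcs generate every $0$-chain of the form $w-v_0$, hence every $\sum a_v v$ with $\sum a_v=0$. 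The reverse inclusion is immediate since $\partial$ of a positive edge has coordinate sum zero. Pulling back via $\nu$ yields the description in (a). Case (c) is the easiest: pick a canonical base $B$ containing an outer-edge $e_1$ with endpoint $v$; then $\partial\vec{e}_1=\pm v$, which combined with the tree-differences $w-v$ generates all of ${\rm C}_0(\Sigma,{\Bbb A})$.

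Case (b) is the main technical point. Take a canonical base $B=T\cup e_0$ where $T\cup e_0$ contains a unique negative circle, and a switching $\nu$ making every edge of $T$ positive in $\Sigma^\nu$. The tree part of $\Sigma^\nu$ produces, as in (a), all $\sum a_v v$ with $\sum a_v=0$; I must then show that adjoining the boundary of $e_0^\nu$ enlarges the image to exactly $\{{\bf b}:\sum{\bf b}(v)\in 2{\Bbb A}\}$. Since $T$ is all-positive but $\Sigma^\nu$ is unbalanced, $e_0^\nu$ must be negative; whether it is a negative loop at $w$ (giving $\partial\vec{e}_0=2w$) or a negative link $uv$ (giving $\partial\vec{e}_0=u+v$, which combined with $u-w$ and $v-w$ from the tree yields $2w$), one produces an element of the form $2w$. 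Together with the tree-differences this gives exactly the subgroup $\{\sum a_v v:\sum a_v\in 2{\Bbb A}\}$. The reverse containment follows by inspecting $\partial$ edge-by-edge in $\Sigma^\nu$: every edge contributes a $0$-chain whose coordinate sum lies in $2{\Bbb A}$ (namely $0$ for positive edges, $2v$ for negative loops, $u+v$ for negative links, $v$ doubled-up when combined with the parity obstruction). Pulling back through $\nu$ yields the description in (b), and the abstract isomorphism $2{\Bbb A}\oplus{\Bbb A}^{|V|-1}$ follows by choosing a vertex $v_0$ and the complement generators.

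The principal obstacle I foresee is the bookkeeping in case (b): one must verify that the generator $2w$ produced from $e_0^\nu$ is genuinely independent modulo the tree-differences and that no additional relations from the negative circle cut the image down further. This amounts to observing that the composite ${\rm B}_0(\Sigma^\nu,{\Bbb A})\hookrightarrow{\rm C}_0(\Sigma^\nu,{\Bbb A})\twoheadrightarrow{\Bbb A}/2{\Bbb A}$ given by $\sum{\bf b}(v)\bmod 2{\Bbb A}$ is the zero map, so the image lies in the claimed subgroup; combined with the generating argument above this pins the image down exactly. Once this is in place, the abstract decomposition $2{\Bbb A}\oplus{\Bbb A}^{|V|-1}$ and the final count giving \eqref{eq:boundary group} are routine.
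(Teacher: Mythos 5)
Your proof is correct, and its overall skeleton matches the paper's: reduce to connected $\Sigma$, use the switching-commutation identity $\partial({\bf c}^\nu)=(\partial{\bf c})^\nu$ to pass to a switched graph whose spanning tree is all-positive, bound the image above by the coordinate-sum (parity) constraint, and then show the constrained subgroup is attained. Where you genuinely diverge is in the surjectivity step. The paper constructs an explicit preimage $1$-chain supported on the tree (resp.\ on $T\cup e_0$, resp.\ on $T\cup e_1$) by induction on leaf removal/contraction, verifying $\partial{\bf c}^\nu={\bf b}^\nu$ vertex by vertex; you instead give a generation argument: telescoping boundaries along tree paths yield all differences $w-v_0$, hence the full sum-zero subgroup, and adjoining the boundary of the one extra edge ($\pm v$ for an outer-edge, or $(2a)w$ extracted from a negative link/loop via tree differences) generates exactly the claimed subgroup. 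Your route is shorter and less bookkeeping-prone, and it isolates cleanly why the answer is what it is (sum-zero part from the tree, the $2{\Bbb A}$ or full ${\Bbb A}$ correction from the single unbalancing/outer edge); the paper's induction has the mild advantage of producing an explicit preimage chain supported on a canonical base, which is occasionally useful as a constructive statement. Two small remarks: your worry about ``relations from the negative circle cutting the image down'' is a non-issue (an image cannot be shrunk by relations in the domain), and your own two-sided containment argument already disposes of it; and in case (b) one should say explicitly that the element produced is $(2a)w$ for every $a\in{\Bbb A}$ (coefficients ranging over the group), not the single integer chain $2w$ --- your construction does give this, since the same coefficient $a$ can be placed on $e_0^\nu$ and on the tree arcs, but the phrasing ``an element of the form $2w$'' obscures it. You also spell out the component-wise reduction for the global formula \eqref{eq:boundary group}, which the paper leaves implicit; that is a harmless and welcome addition.
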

\begin{proof}
(a) and (b): The signed graph $\Sigma$ contains no outer-edges. Let $T$ be a spanning tree of $\Sigma$ and $\nu$ be a switching such that all edges of $T^\nu$ are positive; the switching $\nu$ is unique up to sign. Then all edges of $\Sigma^\nu$ are positive in the case (a) and at least one edge of $\Sigma^\nu\setminus T^\nu$ is negative in the case (b). After switching $\nu$, each flow $\bf c$ of $\Sigma$ with coefficients in $\Bbb A$ becomes a flow ${\bf c}^\nu$ of $\Sigma^\nu$. Note that $[v,\vec{e}\,]=0$ for each positive loop $e$, and $[v,\vec{e}\,]=\pm2$ for each negative loop $e$ at $v$. For each link edge $e$ with endpoints $u$ and $v$, we have
\[
[u,\vec{e}^{\,\nu}]+[v,\vec{e}^{\,\nu}]=\left\{\begin{array}{rl}
0 & \mbox{if $e^\nu$ is positive,}\\
\pm2 & \mbox{if $e^\nu$ is negative.}\\
\end{array}\right.
\]
It follows that
\begin{align*}
\sum_{v\in V}(\partial{\bf c}^\nu)(v)
&=\sum_{v\in V}\sum_{e\in E}[v,\vec{e}^{\,\nu}]
{\bf c}^\nu(\vec{e}^{\,\nu})\\
&=\sum_{e\in E} {\bf c}^\nu(\vec{e}^{\,\nu})
\sum_{v\in V}[v,\vec{e}^{\,\nu}]\\
&=\left\{\begin{array}{rl}
0  & \mbox{for the case (a),}\\
2a & \mbox{for the case (b), where $a\in{\Bbb A}$.}
\end{array}\right.
\end{align*}
Since $\partial{\bf c}^\nu=(\partial{\bf c})^\nu$ and $(\partial{\bf c})^\nu(v)=\nu(v)(\partial{\bf c})(v)$, we obtain
\begin{align*}
\sum_{v\in V}(\partial{\bf c}^\nu)(v)
&= \sum_{v\in V}\nu(v)(\partial{\bf c})(v).
\end{align*}
We see that the 0-chain $\partial{\bf c}$ satisfies the required conditions for the cases (a) and (b).

Conversely, given a 0-chain $\bf b$ with coefficients in $\Bbb A$, satisfying
\[
\sum_{v\in V}\nu(v){\bf b}(v)\left\{\begin{array}{ll}=0 & \mbox{for the case (a),}\\
\in 2{\Bbb A} & \mbox{for the case (b).}
\end{array}\right.
\]
We aim to construct a 1-chain ${\bf c}$ of $\Sigma$ such that $\partial{\bf c}={\bf b}$. We construct such a 1-chain $\bf c$ supported on $T$ by induction on the number of edges of $T$. We first consider the case that $T$ contains no edges, that is, $T$ consists of a single vertex $v_0$. Then all edges of $\Sigma$ must be loops at $v_0$; all loops are positive for the case (a), and at least one loop is negative for the case (b). By definition of $\bf b$, for the case (a), ${\bf b}(v_0)=0$; for the case (b), let us write ${\bf b}(v_0)=2a$ with $a\in\Bbb A$. Let $\bf c$ be a $1$-chain, vanishing on all loop arcs for the case (a), and for the case (b) having value $a$ at one negative loop arc pointing to $v_0$ and vanishing on all other loop arcs. Clearly, $\partial{\bf c}={\bf b}$.

Let $T$ contain some edges. The tree $T$ contains at least two leaves (vertices of degree $1$). Choose a leaf $v_0$ and an edge $e_0$ of $T$ with endpoints $v_0,v_1$. Consider the signed graph $\Sigma_1:=\Sigma^\nu/e_0^\nu$ with the spanning tree $T_1:=T^\nu\smallsetminus e_0^\nu v_0$. Clearly, $\Sigma_1$ is connected and unbalanced. We identity $V(\Sigma_1)$ as the set $V\smallsetminus v_0$ and $E(\Sigma_1)$ as $E\smallsetminus e_0$. Consider the $0$-chain ${\bf b}_1$ of $\Sigma_1$, defined by
\[
{\bf b}_1(v)=\left\{\begin{array}{ll}
{\bf b}^\nu(v) & \mbox{if}\; v\neq v_1,\\
{\bf b}^\nu(v_1)+{\bf b}^\nu(v_0) & \mbox{if}\; v=v_1,
\end{array}\right.
\]
where ${\bf b}^\nu(v)=\nu(v){\bf b}(v)$ for all $v\in V$. Note that $\nu(v_1)\sigma(e_0)\nu(v_0)=\sigma^\nu(e_0)=1$, that is, $\nu(v_1)\sigma(e_0)=\nu(v_0)$. It follows that for the case (a),
\begin{align*}
\sum_{v\in V(T_1)}{\bf b}_1(v)
&=\sum_{v\in V(T)}{\bf b}^\nu(v)\\
&=\left\{\begin{array}{ll}
=0 & \mbox{for the case (a),}\\
\in 2{\Bbb A} & \mbox{for the case (b).}\\
\end{array}\right.
\end{align*}
By induction there exists a 1-chain ${\bf c}_1$ of $\Sigma_1$ such that $\partial{\bf c}_1={\bf b}_1$. We define a 1-chain ${\bf c}^\nu$ of $\Sigma^\nu$ by
\[
{\bf c}^\nu(\vec{e\,}^\nu)=\left\{\begin{array}{ll}
{\bf c}_1(\vec{e\;}^\nu\!/e_0^\nu) & \mbox{if $e\neq e_0$,}\\
{[v_0,\vec{e}_0^{\;\nu}]}\big({\bf b}^\nu(v_0)-\sum_{e_1\neq e_0}[v_0,\vec{e}_1^{\,\nu}] {\bf c}_1({\vec{e}_1^{\,\nu}}/e_0^\nu)\big) & \mbox{if $e=e_0$.}
\end{array}\right.
\]
We aim to show that $\partial{\bf c}^\nu={\bf b}^\nu$ on $\Sigma^\nu$. Note that $[v,\vec{e}_0^{\,\nu}]=0$ and $[v,\vec{e}^{\,\nu}]=[v,\vec{e}^{\,\nu}/e_0^\nu]$ for vertices $v\neq v_0,v_1$ and edges $e\neq e_0$. It is clear that
\begin{align*}
(\partial{\bf c}^\nu)(v)&=\sum_{e\neq e_0}[v,\vec{e}^{\,\nu}]
{\bf c}^\nu(\vec{e\,}^\nu)\\
&=\sum_{e\neq e_0}[v,\vec{e\,}^\nu/e_0^\nu]{\bf c}_1(\vec{e\,}^\nu/e_0^\nu)\\
&={\bf b}_1(v)={\bf b}^\nu(v) \quad
\mbox{for}\; v\neq v_0,v_1.
\end{align*}
For the vertex $v_0$, we trivially have
\begin{align*}
(\partial{\bf c}^\nu)(v_0)&={\bf c}^\nu(\vec{e}_0)
+\sum_{e\neq e_0}[v_0,\vec{e\,}^\nu]{\bf c}^\nu(\vec{e\,}^\nu)\\
&={\bf b}^\nu(v_0)-\sum_{e_1\neq e_0}[v_0,\vec{e}_1^{\,\nu}]{\bf c}_1(\vec{e\,}^\nu_1/e_0^\nu)+\sum_{e\neq e_0} [v_0,\vec{e\,}^\nu]{\bf c}_1(\vec{e\,}^\nu/e_0^\nu)\\
&={\bf b}^\nu(v_0).
\end{align*}
For the vertex $v_1$, we have
\begin{align*}
(\partial{\bf c}^\nu)(v_1)&=[v_1,\vec{e}_0^{\,\nu}]{\bf c}^\nu(\vec{e}_0^{\,\nu})
+\sum_{e\neq e_0}[v_1,\vec{e}^{\,\nu}]{\bf c}^\nu(\vec{e}^{\,\nu})\\
&=-\Big({\bf b}^\nu(v_0)-\sum_{e_1\neq e_0}[v_0,\vec{e}_1^{\,\nu}]{\bf c}_1(\vec{e}_1^{\,\nu}/e_0)\Big)
+\sum_{e\neq e_0} [v_1,\vec{e}^{\,\nu}]{\bf c}_1(\vec{e}^{\,\nu}/e_0)\\
&=-{\bf b}^\nu(v_0)+(\partial{\bf c}_1)(v_1)=-{\bf b}^\nu(v_0)+{\bf b}_1(v_1)={\bf b}^\nu(v_1).
\end{align*}
We have shown that $\partial {\bf c}^\nu={\bf b}^\nu$. Consequently, $\partial{\bf c}={\bf b}$ by \eqref{eq:switching preversing}.

(c) Let $\bf b$ be a 0-chain of $\Sigma$ with coefficients in $\Bbb A$. Fix a spanning tree $T$ of the compact signed graph $\Sigma\setminus\{\mbox{outer-edges}\}$, choose an outer-edge $e_0$ with endpoint $v_0$ and set $B:=T\cup e_0$. Our aim is to construct a 1-chain $\bf c$ on $B$ such that $\partial{\bf c}={\bf b}$, by induction on the number of edges of $B$. If $B$ contains exactly the edge $e_0$, that is, $T$ consists of a single vertex $v_0$, we define $\bf c$ by setting ${\bf c}(\vec{e}_0):={\bf b}(v_0)$ for the arc $\vec{e}_0$ pointing to $v_0$. Clearly, $\partial{\bf c}(v_0)={\bf b}(v_0)$.

If $B$ contains two or more edges, that is, $T$ contains edges, then $T$ contains at leat two leaves. Choose a leaf $v_1$ and an edge $e_1$ of $T$ with endpoints $v_1$ and $v_2$. Remove the edge $e_1$ and its endpoint $v_1$ to obtain $B_1:=B\setminus e_1v_1$. Consider the 0-chain ${\bf b}_1$ on $B_1$, defined by
\[
{\bf b}_1(v)=\left\{\begin{array}{ll}
{\bf b}(v) & \mbox{if $v\neq v_2$,}\\
{\bf b}(v_2)+\sigma(e_1){\bf b}(v_1) & \mbox{if $v=v_2$.}
\end{array}\right.
\]
By induction there exists a 1-chain ${\bf c}_1$ on $B_1$ such that $\partial{\bf c}_1={\bf b}_1$. We consider ${\bf b}_1,{\bf c}_1$ as chains on $B$, such that ${\bf b}_1(v_1)=0$ and ${\bf c}_1(\vec{e}_1)=0$. We define $1$-chain ${\bf c}$ on $B$ by
\[
{\bf c}(\vec{e}\,)=\left\{\begin{array}{ll}
{\bf c}_1(\vec{e}\,) & \mbox{if $e$ is an edge of $B_1$,}\\
{\bf b}(v_1) & \mbox{if $e=e_1$ and $\vec{e}_1$ points to $v_1$.}
\end{array}\right.
\]
Now for vertices $v\neq v_1,v_2$, by induction hypothesis, we have $\partial{\bf c}(v)=\partial{\bf c}_1(v)={\bf b}_1(v)={\bf b}(v)$. For the vertex $v_1$, we have $\partial{\bf c}(v_1)=\partial[{\bf c}_1+{\bf b}(v_1)\vec{e}_1](v_1) ={\bf b}(v_1)\partial\vec{e}_1(v_1)={\bf b}(v_1)$. For the vertex $v_2$, choose arc $\vec{e}_1$ pointing to $v_1$; we have $\partial\vec{e}_1(v_2)=-\sigma(e_1)$ and
\begin{align*}
\partial{\bf c}(v_2)&=\partial\big({\bf c}_1+{\bf b}(v_1)\vec{e}_1\big)(v_2)\\
&={\bf b}_1(v_2) +{\bf b}(v_1)\partial\vec{e}_1(v_2)={\bf b}(v_2).
\end{align*}
We have shown that $\partial{\bf c}={\bf b}$. The chain $\bf c$ can be viewed as a 1-chain of $\Sigma$, vanishing on $B^c$ and $\partial{\bf c}={\bf b}$.

The three cases can be casted into one formula: ${\rm B}_0(\Sigma,{\Bbb A})\cong ({\rm Tor}_2{\Bbb A})^{u_c(\Sigma)}\oplus{\Bbb A}^{r_0(\Sigma)}$, where $r_0(\Sigma)=r(\Sigma)-u_c(\Sigma)$. Indeed, let $\Sigma$ be connected. If $\Sigma$ is balanced, then $u_c(\Sigma)=0$, $r(\Sigma)=|V|-1$ and $r_0(\Sigma)=|V|-1$. If $\Sigma$ is unbalanced and contains no outer-edges, then $u_c(\Sigma)=1$, $r(\Sigma)=|V|$ and $r_0(\Sigma)=|V|-1$.
If $\Sigma$ contain outer-edges, then $u_c(\Sigma)=0$, $r(\Sigma)=|V|$ and $r_0(\Sigma)=|V|$.
\end{proof}

\begin{thm}[Characterization of the homology group]
\begin{equation}\label{eq:homology group}
{\rm H}_0(\Sigma,{\Bbb A})\cong ({\Bbb A}/2{\Bbb A})^{u_c(\Sigma)}\oplus{\Bbb A}^{b(\Sigma)}.
\end{equation}
If $\Bbb A$ is finite, then ${\rm H}_0(\Sigma,{\Bbb A})\cong ({\rm Tor}_2{\Bbb A})^{u_c(\Sigma)}\oplus{\Bbb A}^{b(\Sigma)}$.
\end{thm}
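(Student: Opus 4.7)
The plan is to deduce the structure of ${\rm H}_0(\Sigma,{\Bbb A})$ directly from the preceding theorem characterizing ${\rm B}_0(\Sigma,{\Bbb A})$, by forming the quotient ${\rm C}_0(\Sigma,{\Bbb A})/{\rm B}_0(\Sigma,{\Bbb A})$ one component at a time. Both ${\rm C}_0$ and ${\rm B}_0$ split as direct sums over the connected components $\Sigma_i$ of $\Sigma$, and so does the homology group. Hence it suffices to compute ${\rm H}_0(\Sigma_i,{\Bbb A})$ in each of the three cases recorded in parts (a), (b), (c) of Theorem~\ref{thm:boundary-group} and then tally by the invariants $b(\Sigma)$, $u_c(\Sigma)$, $u(\Sigma)-u_c(\Sigma)$.

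For a connected component $\Sigma'$ on vertex set $V'$, the case analysis is as follows. If $\Sigma'$ is balanced, choose a switching $\nu$ making all edges positive and define the homomorphism $\phi:{\rm C}_0(\Sigma',{\Bbb A})\to{\Bbb A}$, ${\bf b}\mapsto\sum_{v\in V'}\nu(v){\bf b}(v)$. This map is clearly surjective (place any $a\in{\Bbb A}$ at a single vertex) and its kernel is exactly ${\rm B}_0(\Sigma',{\Bbb A})$ by Theorem~\ref{thm:boundary-group}(a), so the quotient is ${\Bbb A}$. If $\Sigma'$ is unbalanced and compact, replace $\phi$ by its composition with the quotient map ${\Bbb A}\to{\Bbb A}/2{\Bbb A}$; this composition is again surjective and its kernel is ${\rm B}_0(\Sigma',{\Bbb A})$ by Theorem~\ref{thm:boundary-group}(b), giving quotient ${\Bbb A}/2{\Bbb A}$. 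If $\Sigma'$ contains outer-edges, then ${\rm B}_0(\Sigma',{\Bbb A})={\rm C}_0(\Sigma',{\Bbb A})$ by Theorem~\ref{thm:boundary-group}(c), so the quotient is trivial. Assembling these contributions yields \eqref{eq:homology group}.

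For the second assertion, it remains to note that ${\Bbb A}/2{\Bbb A}\cong{\rm Tor}_2({\Bbb A})$ whenever $\Bbb A$ is finite. By the structure theorem, write ${\Bbb A}\cong\bigoplus_i{\Bbb Z}/n_i{\Bbb Z}$; then for each summand, both $({\Bbb Z}/n_i{\Bbb Z})/2({\Bbb Z}/n_i{\Bbb Z})$ and ${\rm Tor}_2({\Bbb Z}/n_i{\Bbb Z})$ equal $0$ if $n_i$ is odd and ${\Bbb Z}/2{\Bbb Z}$ if $n_i$ is even. Substituting $({\rm Tor}_2{\Bbb A})^{u_c(\Sigma)}$ for $({\Bbb A}/2{\Bbb A})^{u_c(\Sigma)}$ in \eqref{eq:homology group} gives the finite-group form.

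I do not anticipate any genuine obstacle here, since everything is a corollary of the boundary-group theorem; the only bookkeeping required is verifying surjectivity of $\phi$ and correctly passing between intrinsic and switched formulations via $(\partial{\bf c})^\nu(v)=\nu(v)(\partial{\bf c})(v)$, which is already established in \eqref{eq:switching preversing}. The mild care point is to ensure that, in the unbalanced compact case, the switching $\nu$ (unique up to sign) used to define $\phi$ gives a well-defined map into ${\Bbb A}/2{\Bbb A}$; this is automatic because flipping the overall sign of $\nu$ changes $\phi$ by a sign, and both $\pm$ send ${\rm B}_0$ to $2{\Bbb A}$.
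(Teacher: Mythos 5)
Your proposal is correct and follows essentially the same route as the paper: decompose ${\rm H}_0$ over connected components, treat the three cases of the boundary-group theorem via the linear functional ${\bf b}\mapsto\sum_{v}\nu(v){\bf b}(v)$ and the first isomorphism theorem, and invoke the structure theorem to identify ${\Bbb A}/2{\Bbb A}$ with ${\rm Tor}_2({\Bbb A})$ for finite ${\Bbb A}$. The only cosmetic difference is that in the unbalanced compact case the paper uses the functional $\sum_v{\bf b}(v)$ without $\nu$ (which agrees with yours modulo $2{\Bbb A}$), so the two arguments coincide.
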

\begin{proof}
Notice that ${\rm H}_0(\Sigma,{\Bbb A})=\bigoplus_i{\rm H}_0(\Sigma_i,{\Bbb A})$, where $\Sigma_i$ are components of $\Sigma$. Let $\Sigma$ be connected. If $\Sigma$ contains outer-edges, then ${\rm H}_0(\Sigma,{\Bbb A})=0$ by part (c) of Theorem \ref{thm:boundary-group}. Clearly, ${\rm H}_0(\Sigma,{\Bbb A})$ is of the form \eqref{eq:homology group}, with $u_c(\Sigma)=0$ and $b(\Sigma)=0$.

Let us consider the case that $\Sigma$ contains no outer-edges. If $\Sigma$ is balanced, we choose a switching $\nu$ such that all edges of $\Sigma^\nu$ are positive, and consider the linear map
\[
\ell:{\rm C}_0(\Sigma,{\Bbb A})\rightarrow{\Bbb A},\quad
\ell({\bf b})=\sum_{v\in V}\nu(v){\bf b}(v),
\]
which is obviously an epimorphism. Clearly, $\ker\ell={\rm B}_0(\Sigma,{\Bbb A})$. Consequently, ${\rm H}_0(\Sigma,{\Bbb A})={\rm C}_0(\Sigma,{\Bbb A})/\ker\ell\cong {\Bbb A}$, which is the form \eqref{eq:homology group}, with $u_c(\Sigma)=0$ and $b(\Sigma)=1$.

If $\Sigma$ is unbalanced and contains no outer-edges, consider the linear map
\[
\ell:{\rm C}_0(\Sigma,{\Bbb A})\rightarrow{\Bbb A},\quad
\ell({\bf b})=\sum_{v\in V}{\bf b}(v),
\]
which is obviously an epimorphism. According to part (b) of Theorem \ref{thm:boundary-group}, we have ${\rm B}_0(\Sigma,{\Bbb A})=\ell^{-1}(2{\Bbb A})$. Applying the isomorphism theorem of universal algebra or arguing directly, we obtain ${\rm H}_0(\Sigma,{\Bbb A})\cong {\Bbb A}/2{\Bbb A}$, which is the form \eqref{eq:homology group}, with $u_c(\Sigma)=1$ and $b(\Sigma)=0$.

Now for arbitrary $\Sigma$, the invariants $u_c(\Sigma)$ and $b(\Sigma)$ are additive on the components of $\Sigma$, and the homology group ${\rm H}_0(\Sigma,{\Bbb A})$ is additive (in direct sum) on the components of $\Sigma$. We conclude that the homology group formula \eqref{eq:homology group} is valid for arbitrary signed graphs $\Sigma$.
\end{proof}

\noindent
{\bf Remark.} In general ${\Bbb A}/2{\Bbb A}$ is not isomorphic to ${\rm Tor}_2({\Bbb A})$. For instance, ${\Bbb Z}/2{\Bbb Z}\cong\{0,1\}$, ${\rm Tor}_2({\Bbb Z})=0$. However, if $\Bbb A$ is finite, then ${\Bbb A}/2{\Bbb A}\cong{\rm Tor}_2({\Bbb A})$, according to the classification of finite abelian groups.

\section{Bivariate flow polynomial}

Let ${\rm F}_{\rm nz}(\Sigma,{\Bbb A})$ denote the set of nowhere-zero flows of $\Sigma$ with coefficients in $\Bbb A$. When $\Sigma$ contains no outer-edges and ${\Bbb A}$ is finite of odd order, Beck and Zaslavsky \cite[Theorem 4.1]{Beck-Zaslavsky-JCTB} showed that there exits a unique polynomial $\varphi(\Sigma,x)$ such that
\[
\varphi(\Sigma,|{\Bbb A}|)=|{\rm F}_{\rm nz}(\Sigma,{\Bbb A})|,
\]
and asked for the significance when the order $|\Bbb A|$ is even; see \cite[Problem 4.2]{Beck-Zaslavsky-JCTB}. It is well known that the counting $|{\rm F}_{\rm nz}(\Sigma,{\Bbb A})|$ has no polynomial pattern in terms of the order $|\Bbb A|$. More specifically, applying deletion-contraction method, DeVos, Rollov\'{a} and \u{S}\'{a}mal \cite{DeVos-Rollova-Samal} showed that, for $\Sigma$ without outer-edges and finite $\Bbb A$ with torsion-2 subgroup of fixed order $2^d$, the counting pattern of $|{\rm F}_{\rm nz}(\Sigma,{\Bbb A})|$ has a polynomial form in terms of $|\Bbb A|$. However, it was stated in a strange way that for each $d\geq 0$ there exists a polynomial $f_d$ such that $f_d(n)=|{\rm F}_{\rm nz}(\Sigma,{\Bbb A})|$ for every abelian group ${\Bbb A}$ with largest $\epsilon(\Gamma)=d$ and $|{\Bbb A}|=2^dn$, where $\epsilon(\Gamma)$ is the largest integer $k$ for $\Bbb A$ to have a subgroup isomorphic to ${\Bbb Z}_2^k$, where ${\Bbb Z}_2={\Bbb Z}/2{\Bbb Z}$. Applying the inclusion-exclusion principle, Ren and Qian \cite{Ren-Qian} obtained the same result for signed graphs without outer-edges, but also stated in the same strange way.

We treat the counting patterns of $|{\rm F}_{\rm nz}(\Sigma,{\Bbb A})|$ as a bivariate polynomial in terms of the orders of ${\rm Tor}_2(\Bbb A)$ and ${\Bbb A}$. For each $S$ edge subset of $\Sigma$, the subgroup
\[
{\rm F}_S(\Sigma,{\Bbb A})=\{{\bf f}\in{\rm F}(\Sigma,{\Bbb A}): {\bf f}(\vec{e}\,)=0\;\mbox{for}\; e\in S\}
\]
of ${\rm F}(\Sigma,{\Bbb A})$ can be identified as the group of flows of the spanning signed subgraph $\Sigma|S^c=(V,S^c,\sigma|_{S^c})$ over $\Bbb A$. For each edge $e$ we write ${\rm F}_{\{e\}}$ as ${\rm F}_e$. We obtain the following proposition.

\begin{prop}\label{prop:flow group of spanning signed subgraph}
For each $S$ edge subset of $\Sigma$,
\[
{\rm F}_S(\Sigma,{\Bbb A})\cong({\rm Tor}_2{\Bbb A})^{u_c(S^c)}
\oplus{\Bbb A}^{{\rm cr}(S^c)}.
\]
\end{prop}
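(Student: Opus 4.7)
The plan is to reduce this proposition directly to the Characterization of the flow group (Theorem~\ref{thm:flow-group-structure}) by reinterpreting ${\rm F}_S(\Sigma,{\Bbb A})$ as the flow group of an auxiliary signed graph. The paragraph preceding the statement already observes the crucial identification: a flow $\bf f$ of $\Sigma$ vanishing on every edge of $S$ is the same data as a flow of the spanning signed subgraph $\Sigma|S^c=(V,S^c,\sigma|_{S^c})$, because the boundary condition $(\partial{\bf f})(v)=\sum_{e\in E}[v,\vec{e}\,]{\bf f}(\vec{e}\,)$ at each vertex only involves edges where $\bf f$ is possibly nonzero. This gives a canonical isomorphism ${\rm F}_S(\Sigma,{\Bbb A})\cong{\rm F}(\Sigma|S^c,{\Bbb A})$ of abelian groups.

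With that identification in hand, the next step is simply to invoke Theorem~\ref{thm:flow-group-structure} with $\Sigma$ replaced by $\Sigma|S^c$. Choosing a canonical base $B'$ of $\Sigma|S^c$ in the sense of \eqref{canonical-base}, the theorem gives
\[
{\rm F}(\Sigma|S^c,{\Bbb A})\cong ({\rm Tor}_2{\Bbb A})^{u_c(\Sigma|S^c)}\oplus{\Bbb A}^{{\rm cr}(\Sigma|S^c)}.
\]
Finally I would match the exponents to those in the statement: by the definitions in \eqref{eq:rank and cycle-rank} and the conventions for $c,b,u,u_c$ applied to edge subsets immediately after them, one has by definition $u_c(\Sigma|S^c)=u_c(S^c)$ and ${\rm cr}(\Sigma|S^c)=r^*(\Sigma|S^c)={\rm cr}(S^c)$, so the two expressions coincide.

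There is no real obstacle here; the content of the proposition is entirely packaged in Theorem~\ref{thm:flow-group-structure}. The only thing worth mentioning explicitly in the write-up is the justification for ${\rm F}_S(\Sigma,{\Bbb A})\cong{\rm F}(\Sigma|S^c,{\Bbb A})$, namely that the boundary operator $\partial$ on $\Sigma$, restricted to $1$-chains supported on $S^c$, agrees term-by-term with the boundary operator on $\Sigma|S^c$, so kernels correspond. Once that is noted, the proof is a one-line appeal to the preceding theorem.
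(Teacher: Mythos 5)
Your proposal is correct and matches the paper's own treatment: the paper states the identification ${\rm F}_S(\Sigma,{\Bbb A})\cong{\rm F}(\Sigma|S^c,{\Bbb A})$ in the paragraph preceding the proposition and then proves the proposition by declaring it trivial from Theorem~\ref{thm:flow-group-structure}, which is exactly your reduction. The only difference is that you spell out the two details the paper leaves implicit (the term-by-term agreement of the boundary operators on chains supported on $S^c$, and the matching of exponents via the conventions for $u_c(S^c)$ and ${\rm cr}(S^c)$), which is a harmless elaboration, not a different route.
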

\begin{proof}
It is trivial according to Theorem \ref{thm:flow-group-structure}.
\end{proof}

The following lemma seems obvious and is a folklore that we have no good reference.

\begin{lem}\label{lem:polynomial-unique}
Let $f(t,x)$ and $g(t,x)$ be two bivariate polynomials of $t$-degree at most $m$ and $x$-degree at most $n$, such that $f=g$ at $(m+1)(n+1)$ distinct places $(p_i,q_j)$, $0\leq i\leq m$, $0\leq j\leq n$, then $f=g$ as polynomials, that is, their coefficients are the same.
\end{lem}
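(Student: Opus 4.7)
The plan is to reduce the bivariate uniqueness claim to two applications of the standard univariate fact that a polynomial of degree at most $d$ with more than $d$ distinct roots must be identically zero. To that end, I would set $h(t,x):=f(t,x)-g(t,x)$, note that $h$ still has $t$-degree at most $m$ and $x$-degree at most $n$, and observe that $h(p_i,q_j)=0$ for every $0\le i\le m$ and $0\le j\le n$. The problem then becomes showing $h\equiv 0$.

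Next, I would expand $h$ as a polynomial in $t$ whose coefficients are polynomials in $x$:
\[
h(t,x)=\sum_{k=0}^{m}h_k(x)\,t^k,
\]
where each $h_k(x)$ has degree at most $n$ in $x$ (inherited from the total $x$-degree bound on $h$). For each fixed $j\in\{0,\dots,n\}$, substituting $x=q_j$ yields a univariate polynomial $h(t,q_j)=\sum_{k=0}^{m}h_k(q_j)\,t^k$ of degree at most $m$ which vanishes at the $m+1$ distinct points $p_0,\dots,p_m$. Hence $h(t,q_j)$ is the zero polynomial in $t$, which forces $h_k(q_j)=0$ for every $k$ and every $j$.

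Finally, for each fixed $k\in\{0,\dots,m\}$, the polynomial $h_k(x)$ has degree at most $n$ and vanishes at the $n+1$ distinct points $q_0,\dots,q_n$, so $h_k\equiv 0$. Therefore $h\equiv 0$, which means $f$ and $g$ agree coefficient by coefficient, as desired.

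There is essentially no serious obstacle: the only point worth checking carefully is that the partial degree bound on each $h_k(x)$ in $x$ is indeed at most $n$, which follows immediately from the assumed $x$-degree bound on $h$. Everything else is a direct iteration of the one-variable uniqueness theorem.
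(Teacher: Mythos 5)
Your proof is correct and follows essentially the same strategy as the paper's: iterate the univariate uniqueness theorem by viewing the bivariate polynomial as a polynomial in one variable whose coefficients are polynomials in the other. The only cosmetic differences are that you work with the difference $h=f-g$ (the "vanishing" form of the univariate fact rather than the "agreement" form) and you expand in $t$ with coefficients in $x$ while the paper expands in $x$ with coefficients in $t$; these changes are immaterial.
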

\begin{proof}
It is well known that if two univariate polynomials of degree at most $n$ has same values at $n+1$ distinct places, then the two polynomials are identical, that is, their coefficients are the same. In fact, it can be verified straightforwardly by setting linear equations on the coefficients (viewed as variables), applying Crammer's rule and Vandermonde matrix and determinant.

Let $f(t,x)=\sum a_{ij}t^ix^j$ and $g(t,x)=\sum b_{ij}t^ix^j$. We further write $f$ and $g$ as
\[
f(t,x)=\sum_{j=0}^nf_j(t)x^j, \quad
g(t,x)=\sum_{j=0}^ng_j(t)x^j.
\]
For each fixed $i$, with $0\leq i\leq m$, consider the univariate polynomials $f(p_i,x)$ and $g(p_i,x)$. Since $f(p_i,q_j)=g(p_i,q_j)$, $0\leq j\leq n$, we obtain $f(p_i,x)=g(p_i,x)$ as polynomials, where $0\leq i\leq m$. This means that the coefficients of $f(p_i,x)$ and $g(p_i,x)$ are the same, that is, $f_j(p_i)=g_j(p_i)$, $0\leq i\leq m$ and $0\leq j\leq n$. Now for each fixed $j$, with $0\leq j\leq n$, the polynomials $f_j(t)$ and $g_j(t)$ have degree at most $m$, and have the same values at distinct numbers $p_i$, $0\leq i\leq m$. Again, we  obtain $f_j(t)=g_j(t)$ as polynomials, $0\leq j\leq n$. Clearly, $f(t,x)=g(t,x)$ as polynomials of two variables.
\end{proof}

\begin{thm}[Bivariate flow polynomial]\label{thm:bivariate-flow-poly} For each signed graph $\Sigma$ with edges, there exists a unique bivariate polynomial $\varphi(\Sigma;t,x)$, called the {\em flow polynomial} of $\Sigma$, such that for arbitrary finite abelian groups $\Bbb A$,
\begin{equation}\label{eq:flow-poly-pattern}
|{\rm F}_{\rm nz}(\Sigma,{\Bbb A})|=
\varphi\big(\Sigma;|{\rm Tor}_2{\Bbb A}|,|{\Bbb A}|\big).
\end{equation}
Moreover, $\varphi(\Sigma;t,x)$ admits the expansion
\begin{equation}\label{eq:expansion}
\varphi(\Sigma;t,x)=
\sum_{S\subseteq E}(-1)^{|E\setminus S|} t^{u_c(S)} x^{{\rm cr}(S)},
\end{equation}
and $\varphi(\Sigma;t,x)=0$ if and only if $\Sigma$ contains a co-loop, that is, an edge whose removal
increases exactly one new balanced component.
\end{thm}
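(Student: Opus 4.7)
The plan is to recognise \eqref{eq:expansion} as an inclusion--exclusion count over edge subsets. For each $S\subseteq E$, the subgroup ${\rm F}_S(\Sigma,{\Bbb A})$ of flows vanishing on $S$ has cardinality $|{\rm Tor}_2{\Bbb A}|^{u_c(S^c)}|{\Bbb A}|^{{\rm cr}(S^c)}$ by Proposition \ref{prop:flow group of spanning signed subgraph}. Writing ${\rm F}_{\rm nz}(\Sigma,{\Bbb A})={\rm F}(\Sigma,{\Bbb A})\setminus\bigcup_{e\in E}{\rm F}_{\{e\}}(\Sigma,{\Bbb A})$ and applying the inclusion--exclusion principle gives
\[
|{\rm F}_{\rm nz}(\Sigma,{\Bbb A})|=\sum_{S\subseteq E}(-1)^{|S|}|{\rm F}_S(\Sigma,{\Bbb A})|.
\]
Reindexing by $T=E\setminus S$ turns the sign $(-1)^{|S|}$ into $(-1)^{|E\setminus T|}$ and substituting the closed form reproduces exactly the right-hand side of \eqref{eq:expansion} evaluated at $(t,x)=(|{\rm Tor}_2{\Bbb A}|,|{\Bbb A}|)$, establishing existence of $\varphi(\Sigma;t,x)$ together with its explicit expansion in one stroke.

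Uniqueness then follows from Lemma \ref{lem:polynomial-unique} once I exhibit a sufficiently rich set of evaluation points. For every $k\ge 0$ and every odd $m\ge 1$, the group ${\Bbb A}=({\Bbb Z}/2{\Bbb Z})^k\oplus{\Bbb Z}/m{\Bbb Z}$ realises $(|{\rm Tor}_2{\Bbb A}|,|{\Bbb A}|)=(2^k,2^k m)$. Fixing $k$ and varying $m$ over odd integers gives infinitely many distinct $x$-values at $t=2^k$, and varying $k$ yields infinitely many distinct $t$-values; this doubly infinite grid of pairs is exactly the kind of data required by the lemma to pin down a bivariate polynomial of bounded bi-degree, so any polynomial satisfying \eqref{eq:flow-poly-pattern} on this grid must equal the polynomial defined by \eqref{eq:expansion}.

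For the forward direction of the co-loop characterisation, assume $\Sigma$ has a co-loop $e$, so removing $e$ separates off a new balanced piece on some vertex set $X$ with $e$ either crossing from $X$ to $X^c$ or being an outer-edge at $X$. After a switching $\nu$ rendering $\Sigma[X]\setminus e$ all-positive, I sum $\partial{\bf f}^\nu(v)$ over $v\in X$ for any ${\bf f}\in{\rm F}(\Sigma,{\Bbb A})$: every internal positive edge contributes $0$ because its two incidence coefficients cancel, while the unique boundary half-edge of $e$ contributes $\pm{\bf f}(\vec{e}\,)$. Since $\partial{\bf f}^\nu\equiv 0$, this forces ${\bf f}(\vec{e}\,)=0$ for every flow over every finite ${\Bbb A}$, giving $|{\rm F}_{\rm nz}(\Sigma,{\Bbb A})|=0$ identically; the grid argument with Lemma \ref{lem:polynomial-unique} then forces $\varphi(\Sigma;t,x)\equiv 0$.

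The converse is the main obstacle. Assuming $\Sigma$ has no co-loop, each edge of $\Sigma$ lies in the support of at least one generator of ${\rm F}(\Sigma,{\Bbb A})$ supplied by Theorem \ref{thm:flow-group-structure} --- either a principal circuit flow ${\bf I}_{\omega(C(B,e'))}$ for some $e'\in B^c$, or a torsion-valued flow $a\cdot{\bf 1}_{\omega(C_\rho)}$ on a negative circle $C_\rho\subset B$ --- because the absence of a co-loop is precisely what rules out an edge being missed by both constructions. Over ${\Bbb A}={\Bbb Z}/2{\Bbb Z}\oplus{\Bbb Z}/p{\Bbb Z}$ for a large prime $p$, a generic ${\Bbb A}$-linear combination of these generators is nowhere zero: at each edge the vanishing condition defines a proper subgroup of the coefficient module (since some generator is nonzero at that edge), and the finite union of these proper subgroups is proper once $|{\Bbb A}|$ is large enough. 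Hence $\varphi(\Sigma;|{\rm Tor}_2{\Bbb A}|,|{\Bbb A}|)=|{\rm F}_{\rm nz}(\Sigma,{\Bbb A})|>0$ for such ${\Bbb A}$, so $\varphi(\Sigma;t,x)\not\equiv 0$.
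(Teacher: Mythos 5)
Your derivation of existence and of the expansion \eqref{eq:expansion} --- inclusion--exclusion over the subgroups ${\rm F}_S(\Sigma,{\Bbb A})$ combined with Proposition~\ref{prop:flow group of spanning signed subgraph} --- and your uniqueness argument via Lemma~\ref{lem:polynomial-unique} are exactly the paper's proof of this theorem; in fact the paper's proof ends there. (One small point, which applies to the paper as well: your evaluation pairs $(2^k,2^km)$ do not form the Cartesian grid that Lemma~\ref{lem:polynomial-unique} literally hypothesizes, since the available $x$-values change with $k$; but the lemma's two-step proof --- fix $t=2^k$ to identify the univariate polynomials in $x$, then vary $k$ coefficientwise --- applies verbatim.)

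The genuine gap is in your forward direction of the vanishing criterion, and it cannot be closed, because that implication is false as the theorem states it. You assume a co-loop $e$ is either a bridge or an outer-edge, but Lemma~\ref{lem:co-loop}(c) allows a third type: an edge of a compact unbalanced component $\Sigma[X]$ whose removal makes $\Sigma[X]$ balanced --- the simplest instances being a single negative loop at one vertex (the graph $\Sigma_{0,1}$), or any edge of a negative circle. For such an edge your summation of $(\partial{\bf f}^\nu)(v)$ over $v\in X$ does not produce $\pm{\bf f}(\vec{e}\,)$: the edge $e$ remains negative after the switching, so it contributes $\pm2\,{\bf f}(\vec{e}\,)$, and you only conclude ${\bf f}(\vec{e}\,)\in{\rm Tor}_2({\Bbb A})$, not ${\bf f}(\vec{e}\,)=0$. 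Indeed the conclusion fails there: the negative loop of $\Sigma_{0,1}$ is a co-loop in the theorem's sense (its removal raises the number of balanced components by exactly one), yet $\varphi(\Sigma_{0,1};t,x)=t-1\neq 0$ by Proposition~\ref{prop:Recurrences}(a). This is precisely why the paper's own proof of the theorem is silent on the co-loop claim: the correct statement appears later as Proposition~\ref{prop:deletion-contraction formula}, where ``co-loop'' is replaced by ``bridge co-loop or outer-edge co-loop''. What your argument actually proves is the ``if'' direction of that corrected statement, and you should flag the discrepancy rather than subsume it.

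Your converse direction (no co-loop implies $\varphi\neq 0$, which is the true direction of the theorem's claim) is sound, and it is where you genuinely depart from the paper. You take ${\Bbb A}={\Bbb Z}/2{\Bbb Z}\oplus{\Bbb Z}/p{\Bbb Z}$ and observe that vanishing at a fixed edge is the kernel of a homomorphism from the coefficient module $({\rm Tor}_2{\Bbb A})^{u_c(\Sigma)}\oplus{\Bbb A}^{{\rm cr}(\Sigma)}$ of Theorem~\ref{thm:flow-group-structure} to ${\Bbb A}$; since an edge missed by every fundamental circuit lies in no circuit at all and would be a co-loop, each such homomorphism restricts to $a\mapsto\pm a$ or $a\mapsto\pm2a$ on some ${\Bbb A}$-summand, so its kernel has index at least $p$, and the union of the $|E|$ kernels is proper once $p>|E|$. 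The paper instead (in the proof of Proposition~\ref{prop:deletion-contraction formula}) exhibits an explicit nowhere-zero integer-valued flow $\sum_i2^i\,{\bf I}_{\omega(C_i)}$ built from circuits covering $E$ and reduces modulo a large cyclic group. Your version trades that explicit construction for a counting estimate; both work, but yours must state the index bound explicitly, since a finite abelian group can very well be a union of finitely many proper subgroups.
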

\begin{proof}
For each $S$ edge subset of $\Sigma$, let ${\rm F}_S(\Sigma,{\Bbb A})$ denote the subgroup of flows vanishing on $S$, that is, flows supported on $S^c:=E\setminus S$. Then by Proposition~\ref{prop:flow group of spanning signed subgraph},
\[
{\rm F}_{E\setminus S}(\Sigma,{\Bbb A})
\cong {\rm F}(\Sigma|S,{\Bbb A})
\cong ({\rm Tor}_2{\Bbb A})^{u_c(S)}
\oplus {\Bbb A}^{{\rm cr}(S)}.
\]
Applying inclusion-exclusion, we obtain
\begin{align*}
|{\rm F}_{\rm nz}(\Sigma,{\Bbb A})|&=
\Big|{\rm F}(\Sigma,{\Bbb A})\setminus
\bigcup_{e\in E} {\rm F}_e(\Sigma,{\Bbb A})\Big|\\
&=\sum_{S\subseteq E}(-1)^{|S|} |{\rm F}_S(\Sigma,{\Bbb A})|\\
&=\sum_{S\subseteq E}(-1)^{|E\setminus S|}
|{\rm F}_{E\setminus S}(\Sigma,{\Bbb A})|\\
&= \sum_{S\subseteq E}(-1)^{|E\setminus S|}
|{\rm Tor}_2{\Bbb A}|^{u_c(S)} |{\Bbb A}|^{{\rm cr}(S)}.
\end{align*}
It is clear that $\varphi$ satisfies the required condition of the theorem. Since the cardinality $|{\Bbb A}|$ can be any positive integers and $|{\rm Tor}_2{\Bbb A}|$ can be any power $2^k$ of nonnegative integers $k$, applying Lemma \ref{lem:polynomial-unique}, such bivariate polynomial must be unique.
\end{proof}

For any signed graph $(V,\varnothing)$ without edges, its flow polynomial is conventionally defined as $\varphi(\varnothing;t,x)\equiv 1$. Let $\Sigma_i$ denote components of $\Sigma$. Notice that the flow group ${\rm F}(\Sigma,{\Bbb A})$ is a direct sum of flow groups of the (vertex) disjoint signed graphs $\Sigma_i$, and the direct sum preserves the property of nowhere-zero, that is,
\[
{\rm F}_{\rm nz}(\Sigma,{\Bbb A})=\bigoplus_i{\rm F}_{\rm nz}(\Sigma_i,{\Bbb A}).
\]
It follows that
\begin{equation}
\varphi(\Sigma;t,x)=\prod_i\varphi(\Sigma_i;t,x).
\label{eq:product-formula-of-flow-poly}
\end{equation}
We only need to consider connected signed graphs when we study flow polynomials.

The flow polynomial of signed graph $\Sigma$ in the literature is a univariate polynomial $\varphi_{\rm odd}(\Sigma,x)$ such that $\varphi_{\rm odd}(\Sigma,n)$, with $n$ odd positive integers, counts the number of flows of $\Sigma$ valued in the groups ${\Bbb Z}_n$; see Beck and Zaslavsky \cite{Beck-Zaslavsky-JCTB}. Since ${\rm Tor}_2({\Bbb Z}_n)=\{0\}$ for $n$ odd, the polynomial $\varphi_{\rm odd}(\Sigma,x)$ is a specialized case of the bivariate flow polynomial $\varphi(\Sigma;t,x)$ as
\begin{equation}\label{eq:odd-flow polynomial}
\varphi_{\rm odd}(\Sigma,x):=\varphi(\Sigma;1,x).
\end{equation}
However, for $n$ even positive integers, it has been puzzled for a long time in the community of graph theory: whether or not there exists a univariate polynomial $\varphi_{\rm even}(\Sigma,x)$ such that
\[
\varphi_{\rm even}(\Sigma,n)=|{\rm F}_{\rm nz}(\Sigma,{\Bbb Z}_n)|.
\]
See Zaslavsky \cite{Zaslavsky-signed-graphs-DAM, Zaslavsky-signed-graph-coloring-DM} and Problem 4.2 of Beck and Zaslavsky \cite{Beck-Zaslavsky-JCTB}.
In fact, for even $n=2k$, since ${\rm Tor}_2({\Bbb Z}_n)=\{0,k\}\cong{\Bbb Z}_2$, the following theorem answers part of Problem 4.2.

\begin{thm}
For each signed graph $\Sigma$ with edges, there exist unique polynomials $\varphi_{\rm even}(\Sigma,x)$ and $\varphi_{\rm odd}(\Sigma,x)$, such that for all integers $n\geq 2$,
\begin{align}
\varphi_{\rm odd}(\Sigma,n)&=|{\rm F}_{\rm zn}(\Sigma,{\Bbb Z}_n)|, \quad n={\rm odd},\\
\varphi_{\rm even}(\Sigma,n)&=|{\rm F}_{\rm zn}(\Sigma,{\Bbb Z}_n)|, \quad n={\rm even}.
\end{align}
Moreover, $\varphi_{\rm even}(\Sigma,x)$ and $\varphi_{\rm odd}(\Sigma,x)$ are specialized from the flow polynomial $\varphi(\Sigma;t,x)$ as
\begin{align}\label{eq:even-flow polynomial}
\varphi_{\rm odd}(\Sigma,x)
&=\varphi(\Sigma;1,x),\\
\varphi_{\rm even}(\Sigma,x)
&=\varphi(\Sigma;2,x).
\end{align}
\end{thm}

Recall that a {\em bridge} of $\Sigma$ (viewed as a unsigned graph) is an edge whose removal increases exactly one connected component. A {\em co-loop} of $\Sigma$, also known as {\em isthmus}, is an edge that is a bond (co-circuit) of $\Sigma$. According matroid theory, there are three equivalent statements: (1) an edge $e$ is a co-loop, namely, a bond of $\Sigma$; (2) the edge $e$ belongs to every base of $\Sigma$; (3) the edge $e$ does not belong to any circuit of $\Sigma$.

\begin{lem}\label{lem:co-loop}
Let $e$ be a co-loop of $\Sigma$, contained in a component of the form $\Sigma[X)$. Then the edge $e$ is in one and only one of the following situations:
\begin{enumerate}[\hspace{5mm}\rm (a)]
\item
$e$ is an outer-edge of $\Sigma[X)$. More precisely, $\Sigma[X]=\Sigma[X)\setminus e$ and $\Sigma[X]$ is balanced.

\item
$e$ is a bridge of $\Sigma[X)$. More specifically, $e$ is between a balanced component $\Sigma_0$ of $\Sigma[X)\setminus e$
and another component $\Sigma_1$ (either balanced or unbalanced) of $\Sigma[X)\setminus e$.

\item
$e$ is neither a bridge nor an outer-edge of $\Sigma[X)$. More precisely, $\Sigma[X]=\Sigma[X)$, $\Sigma[X]$ is unbalanced, and $\Sigma[X]\setminus e$ is balanced.
\end{enumerate}
\end{lem}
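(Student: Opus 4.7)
The plan is to perform a case analysis on the topological type of the co-loop $e$ within its component $\Sigma[X)$: (i) $e$ is an outer-edge, (ii) $e$ is a non-outer-edge that is a bridge of $\Sigma[X)$, or (iii) $e$ is a non-outer-edge that is not a bridge. These three cases are mutually exclusive and exhaustive, and I will verify that they correspond exactly to (a), (b), (c). The driving principle is the matroid-theoretic equivalence recalled just before the lemma: $e$ is a co-loop if and only if $e$ belongs to no circuit of $\Sigma$. Hence in each case I will forbid the existence of any circuit through $e$ of the five patterns (C1)--(C5) from Theorem \ref{thm:circuits} and read off the required structural conclusion.

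For case (a), assume $e$ is an outer-edge of $\Sigma[X)$. Any second outer-edge $e'$ of $\Sigma[X)$ would combine with a path in $\Sigma[X)$ joining the endpoints of $e$ and $e'$ to form an open line (C1) circuit through $e$; so $e$ is the unique outer-edge of $\Sigma[X)$, giving $\Sigma[X]=\Sigma[X)\setminus e$. Next, if $\Sigma[X]$ contained a negative circle $C_0$, then $C_0$ together with a shortest path from $C_0$ to the endpoint of $e$ and the outer-edge $e$ itself would form a C4 circuit through $e$; hence $\Sigma[X]$ is balanced.

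For case (b), assume $e$ is a non-outer-edge bridge, and write $\Sigma[X)\setminus e=\Sigma_0\sqcup\Sigma_1$. Suppose for contradiction that both components are unbalanced. Each $\Sigma_i$ then contains a witness of unbalance, namely a negative circle $C_i$ or an outer-edge $e_i$. Stitching the two witnesses across $e$ via shortest connecting paths within each $\Sigma_i$ produces a circuit through $e$ of pattern (C1) if both witnesses are outer-edges, (C4) if one is a negative circle and the other an outer-edge, and (C5) if both are negative circles. Each subcase contradicts the co-loop hypothesis, so at least one of $\Sigma_0, \Sigma_1$ must be balanced.

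For case (c), assume $e$ is a non-outer-edge that is not a bridge. A positive loop is itself a C2 circuit, so $e$ is either a negative loop or a link. I first show $\Sigma[X)$ has no outer-edges: a negative circle through $e$ is available (either the loop $e$ itself, or the cycle $P\cup e$ for any path $P$ in $\Sigma[X)\setminus e$ between the endpoints of $e$, which must be negative else (C2)), and any outer-edge $e'$ of $\Sigma[X)$ joined to this circle by a shortest path would yield a C4 circuit through $e$. Hence $\Sigma[X]=\Sigma[X)$, and the same negative circle through $e$ shows $\Sigma[X]$ is unbalanced. For the final claim that $\Sigma[X]\setminus e$ is balanced, I invoke the matroid characterization that a co-loop belongs to every base: if $\Sigma[X]\setminus e$ were unbalanced, a spanning tree $T$ of $\Sigma[X]\setminus e$ (which exists since $e$ is not a bridge) together with an edge $e_1$ completing a negative circle of $\Sigma[X]\setminus e$ would form a base $T\cup e_1$ of $\Sigma[X]$ not containing $e$, contradicting that $e$ is a co-loop. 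The main obstacle is precisely this last step: a direct attempt to build a (C3) or (C5) circuit through $e$ from a negative circle through $e$ and a disjoint negative circle in $\Sigma[X]\setminus e$ is hampered by potential vertex overlaps between the two circles inside $\Sigma[X]$, and the matroid-theoretic shortcut via bases sidesteps this combinatorial case analysis entirely.
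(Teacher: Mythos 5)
Your proof is correct, but it takes a genuinely different route from the paper's. The paper disposes of this lemma in one line: by definition a co-loop is a bond consisting of exactly one edge, so the three cases are read directly off the bond machinery of Section 4 (Theorem \ref{thm:bond}(c): removing a bond creates exactly one new balanced component, minimality being automatic for a singleton), according to whether that single edge lies in $O_X$, in $[X,X^c]$, or in $E_X$. You instead argue from the circuit side: using the equivalence that a co-loop lies in no circuit (stated just before the lemma and proved as Lemma \ref{lem:co-loop characterization}; its proof is independent of this lemma, so no circularity arises), you split into the exhaustive, mutually exclusive cases outer-edge / non-outer bridge / neither, and in each case exhibit a forbidden circuit of type (C1)--(C5) from Theorem \ref{thm:circuits} to force the stated structure. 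Your treatment of case (c) is the notable point: rather than assembling a (C3)/(C5) circuit from a negative circle through $e$ and one avoiding $e$ --- which, as you observe, founders on possible vertex overlaps --- you switch to the every-base characterization and produce a base $T\cup e_1$ of $\Sigma[X]$ avoiding $e$. That is clean and matches the canonical-base construction \eqref{canonical-base}; your one unproved step there (an unbalanced compact connected signed graph has a spanning tree $T$ and an edge $e_1$ with $T\cup e_1$ containing a negative circle) is the same standard switching fact the paper itself takes for granted in Section 5.1. The trade-off: the paper's argument is essentially free because Theorem \ref{thm:bond} already did the structural work on cuts and bonds, whereas your argument is longer but self-contained at the level of circuits and bases and never invokes the cut/bond theory at all.
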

\begin{proof}
It is straightforward by definition of co-loop of $\Sigma$, that is, a bond of $\Sigma$ containing exactly one edge.
\end{proof}

\begin{lem}\label{lem:co-loop characterization}
Let $e$ be an edge of a matroid $\mathcal{M}$ over a nonempty set $E$. Then the following statements are equivalent.
\begin{enumerate}[\hspace{5mm}\rm (a)]
\item
$e$ is a co-loop of $\mathcal{M}$.

\item
$e\in B$ for each base $B$ of $\mathcal{M}$.

\item
There is no circuit $C$ of $\mathcal{M}$ such that $e\in C$.
\end{enumerate}
\end{lem}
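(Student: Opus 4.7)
The plan is to establish the cyclic implications $(a)\Rightarrow(b)\Rightarrow(c)\Rightarrow(a)$, each of which reduces quickly to standard matroid axioms via the duality between cocircuits (= bonds) and hyperplanes. The underlying fact I would invoke is the standard identification: $C^*\subseteq E$ is a cocircuit of $\mathcal{M}$ if and only if $E\setminus C^*$ is a hyperplane, i.e.\ a flat of rank $r(\mathcal{M})-1$.

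For $(a)\Rightarrow(b)$: assuming $\{e\}$ is a bond, I would use the hyperplane characterization to conclude $r(E\setminus\{e\})=r(\mathcal{M})-1$. Then for any base $B$, the bound $|B|=r(\mathcal{M})$ forces $e\in B$, since otherwise $B\subseteq E\setminus\{e\}$ would give $r(E\setminus\{e\})\geq r(\mathcal{M})$, a contradiction.

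For $(b)\Rightarrow(c)$, I would argue by contrapositive: if some circuit $C$ contains $e$, then $C\setminus\{e\}$ is independent, so it extends to a base $B$. Were $e\in B$, we would have $C\subseteq B$, contradicting the independence of $B$. Hence $e\notin B$, contradicting (b).

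For $(c)\Rightarrow(a)$, I would pick a base $B'$ of the restriction $\mathcal{M}|(E\setminus\{e\})$. Hypothesis (c) ensures $B'\cup\{e\}$ is independent (otherwise it would contain a circuit, which would necessarily pass through $e$ since $B'$ is independent). This yields $r(\mathcal{M})\geq |B'|+1=r(E\setminus\{e\})+1$, and combined with the trivial monotonicity bound $r(E\setminus\{e\})\geq r(\mathcal{M})-1$, one gets $r(E\setminus\{e\})=r(\mathcal{M})-1$. Thus $E\setminus\{e\}$ is a hyperplane and $\{e\}$ is a cocircuit, i.e.\ a co-loop.

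The main obstacle is essentially absent: this is folklore matroid theory, and the only care needed is to invoke the cocircuit-hyperplane duality rather than re-derive it. If one wished to avoid citing duality, one could instead verify $(c)\Rightarrow(a)$ by checking the bond definition of the present paper directly, using that an edge lying in no circuit cannot complete an improving set, but this adds no mathematical content.
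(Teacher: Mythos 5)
Your proposal is correct, and its middle implication $(b)\Rightarrow(c)$ is word-for-word the paper's argument: extend the independent set $C\setminus\{e\}$ to a base, invoke (b) to force $e$ into that base, and contradict independence. Where you genuinely diverge is in $(a)\Rightarrow(b)$ and $(c)\Rightarrow(a)$: the paper works entirely inside the dual matroid $\mathcal{M}^*$, using only that a co-loop is a circuit of $\mathcal{M}^*$ (hence $\{e\}$ is not independent in $\mathcal{M}^*$, hence $e$ lies in no base of $\mathcal{M}^*$, hence in every base of $\mathcal{M}$ by complementation of bases), and for $(c)\Rightarrow(a)$ it extends $\{e\}$ to a base $B^c$ of $\mathcal{M}^*$ and exhibits the fundamental circuit of $e$ with respect to $B$. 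You instead route both implications through the cocircuit--hyperplane correspondence and rank bookkeeping. Your version is fine, but it carries one small implicit step you should make explicit: showing $r(E\setminus\{e\})=r(\mathcal{M})-1$ does not by itself say $E\setminus\{e\}$ is a hyperplane --- you also need it to be a flat. That is immediate here (its closure has the same rank $r(\mathcal{M})-1<r(E)$, so the closure cannot be $E$ and must equal $E\setminus\{e\}$), but it deserves a sentence. The trade-off between the two routes: the paper's argument needs nothing beyond the definition of $\mathcal{M}^*$ via complementary bases and the uniqueness of fundamental circuits, so it is the more self-contained choice in a paper that never introduces flats or hyperplanes; yours leans on a heavier standard fact but keeps all reasoning in $\mathcal{M}$ itself, with transparent rank inequalities.
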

\begin{proof}
(a) $\Rightarrow$ (b): Co-loop $e$ by definition means that the singleton $\{e\}$ is a co-circuit of $\mathcal{M}$, a minimal dependent set of $\mathcal{M}^*$. Then $\{e\}$ is not independent in $\mathcal{M}^*$, that is, $e$ is not contained in any base of $\mathcal{M}^*$. Thus $e\not\in B^c$, that is, $e\in B$, for each base $B$ of $\mathcal{M}$.

(b) $\Rightarrow$ (c): Suppose there exists a circuit $C$ of $\mathcal{M}$ such that $e\in C$. Note that $C\setminus e$ is independent in $\mathcal{M}$. Then $C\setminus e$ can be extended to a base $B_1$ of $\mathcal{M}$. Applying (b), we obtain $e\in B_1$. Thus $C$ is contained in $B_1$, so $C$ is independent, a contradiction.

(c) $\Rightarrow$ (a): Suppose that $e$ is not a co-loop of $\mathcal{M}$; that is, $\{e\}$ is not dependent in $\mathcal{M}^*$; in other words, $\{e\}$ is independent in $\mathcal{M}^*$. Let $\{e\}$ be extended to a base $B^c$ of $\mathcal{M}^*$, with $B$ a base of $\mathcal{M}$. Since $e\in B^c$, there exists a unique circuit $C$ contained in $B\cup e$ such that $e\in C$, contradicting (c).
\end{proof}

\begin{lem}\label{lem:deletion-contraction-flow-group}
\begin{enumerate}[\rm (a)]
\item
If $e$ is a positive loop of $\Sigma$, then
\begin{align}
{\rm F}(\Sigma,{\Bbb A})&\cong
{\Bbb A}\oplus{\rm F}(\Sigma\setminus e,{\Bbb A}),\nonumber\\
{\rm F}_{\rm nz}(\Sigma,{\Bbb A})&\cong
({\Bbb A}\setminus 0) \oplus
{\rm F}_{\rm nz}(\Sigma\setminus e,{\Bbb A}).\label{eq:positive loop}
\end{align}

\item
If $e$ is a negative loop and also a co-loop of $\Sigma$, then
\begin{align}
{\rm F}(\Sigma,{\Bbb A})&\cong {\rm Tor}_2({\Bbb A})\oplus {\rm F}(\Sigma\setminus e,{\Bbb A}),\nonumber\\
{\rm F}_{\rm nz}(\Sigma,{\Bbb A})&\cong
({\rm Tor}_2({\Bbb A})\setminus 0) \oplus
{\rm F}_{\rm nz}(\Sigma\setminus e,{\Bbb A}).
\label{eq:both neg-loop and co-loop}
\end{align}

\item
If $e$ is a positive link edge of $\Sigma$, then
\begin{align}
{\rm F}(\Sigma,{\Bbb A})&\cong {\rm F}(\Sigma/e,{\Bbb A}),\nonumber\\
{\rm F}_{e,\rm nz}(\Sigma,{\Bbb A})&\cong
{\rm F}_{\rm nz}(\Sigma\setminus e,{\Bbb A}),\label{eq:positive link edge0}\\
{\rm F}_{\rm nz}(\Sigma/e,{\Bbb A})&\cong
{\rm F}_{\rm nz}(\Sigma,{\Bbb A})\cup {\rm F}_{e,\rm nz}(\Sigma,{\Bbb A})\;
\mbox{\rm (disjoint)}.\label{eq:positive link edge1}
\end{align}

\item
If $e$ is a bridge of $\Sigma$, then ${\rm F}(\Sigma,{\Bbb A})\cong {\rm F}(\Sigma/e,{\Bbb A})$.

\item
If $e$ is a co-loop of $\Sigma$ and is either a bridge or an outer-edge, then
\[
{\rm F}(\Sigma,{\Bbb A})= {\rm F}_e(\Sigma,{\Bbb A})\cong {\rm F}(\Sigma\setminus e,{\Bbb A})\cong {\rm F}(\Sigma/e,{\Bbb A}).
\]
\end{enumerate}
\end{lem}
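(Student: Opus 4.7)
The uniform strategy is to decompose an arbitrary $1$-chain ${\bf f}\in{\rm C}_1(\Sigma,{\Bbb A})$ as ${\bf f}=a\vec{e\,}+{\bf f}_0$ with $a\in{\Bbb A}$ and ${\bf f}_0\in{\rm C}_1(\Sigma\setminus e,{\Bbb A})$ regarded as a subgroup of ${\rm C}_1(\Sigma,{\Bbb A})$, then translate $\partial{\bf f}=0$ into conditions on the pair $(a,{\bf f}_0)$. Since $\partial\vec{e\,}$ is supported only on the endpoint(s) of $e$, the flow equation decouples away from those endpoint(s); what remains is to analyze the endpoint equations via the boundary formula for $\vec{e\,}$ combined with the structural Theorem~\ref{thm:boundary-group} applied to the component of $\Sigma\setminus e$ that contains those endpoints. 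The five parts then differ only in which case of Lemma~\ref{lem:co-loop} (and correspondingly of Theorem~\ref{thm:boundary-group}) governs that component.

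For part (a), a positive loop at $v$ satisfies $\partial\vec{e\,}=v-v=0$, so $a\in{\Bbb A}$ is arbitrary and ${\bf f}_0$ is simply a flow of $\Sigma\setminus e$; the nowhere-zero statement follows by additionally requiring $a\neq 0$. For part (b), a negative loop has $\partial\vec{e\,}=2v$; since $e$ is both a negative loop and a co-loop, Lemma~\ref{lem:co-loop} forces case (c) there, so $\Sigma[X]\setminus e$ is balanced. Writing $\partial{\bf f}=0$ at $v$ and applying Theorem~\ref{thm:boundary-group}(a) to $\Sigma[X]\setminus e$ (after the switching $\nu$ that makes it all-positive) yields $2a+\partial{\bf f}_0(v)=0$ while $\sum_{u\in X}\nu(u)\partial{\bf f}_0(u)=0$, whose combination pins $2a=0$, i.e.\ $a\in{\rm Tor}_2({\Bbb A})$, and then ${\bf f}_0$ is itself a flow of $\Sigma\setminus e$. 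Conversely any such pair combines to a flow, and the nowhere-zero version follows by requiring $a\neq 0$.

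For parts (c) and (d), I exhibit the restriction-extension isomorphism between ${\rm F}(\Sigma,{\Bbb A})$ and ${\rm F}(\Sigma/e,{\Bbb A})$. When $e$ is a positive link with endpoints $u,v$, the $1$-chain ${\bf f}_0$ on $\Sigma\setminus e$ descends verbatim to $\Sigma/e$, since the two have the same edge set; at the merged vertex $\tilde{v}$ of $\Sigma/e$, the boundary of ${\bf f}_0$ equals the sum of its boundaries at $u$ and $v$ in $\Sigma\setminus e$, and the two endpoint equations $\partial{\bf f}(u)=0=\partial{\bf f}(v)$ collapse into the single equation at $\tilde v$, with ${\bf f}(\vec{e\,})=a$ uniquely determined by either one. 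This gives the first isomorphism in (c); the identity ${\rm F}_{e,\rm nz}(\Sigma,{\Bbb A})\cong{\rm F}_{\rm nz}(\Sigma\setminus e,{\Bbb A})$ is immediate from definitions, and (\ref{eq:positive link edge1}) is the partition of ${\rm F}_{\rm nz}(\Sigma/e,{\Bbb A})$ by whether the corresponding ${\bf f}(\vec{e\,})$ is zero. For part (d), if the bridge $e$ is negative, apply a switching at one of its endpoints to turn $e$ into a positive link edge (switching preserves flows) and then invoke (c). For part (e), Lemma~\ref{lem:co-loop characterization} tells us $e$ lies in no circuit; hence no nonzero flow has $e$ in its support, so ${\rm F}(\Sigma,{\Bbb A})={\rm F}_e(\Sigma,{\Bbb A})$, and both the restriction to $\Sigma\setminus e$ and the contraction to $\Sigma/e$ (using the reduction from parts (c)/(d), valid because $e$ is either a bridge or an outer-edge) provide the asserted isomorphisms.

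The main obstacle is the negative-loop case (b): forcing $a\in{\rm Tor}_2({\Bbb A})$ requires using the precise signed-sum identity for balanced components from Theorem~\ref{thm:boundary-group}(a), together with the fact (from Lemma~\ref{lem:co-loop}) that a negative loop which is also a co-loop necessarily makes $\Sigma[X]\setminus e$ balanced, not merely connected. A secondary subtlety is verifying in part (c) that the contraction map genuinely respects the flow group when the component of $e$ also contains outer-edges or negative circles, which reduces to the bookkeeping at $\tilde v$ described above. Everything else is routine once the decomposition ${\bf f}=a\vec{e\,}+{\bf f}_0$ is set up and the correct case of Lemma~\ref{lem:co-loop} is invoked.
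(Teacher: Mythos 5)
Parts (a)--(d) of your proposal are correct. For (a), (c), (d) your route is essentially the paper's: the restriction/extension map whose well-definedness comes from the two endpoint equations at $u,v$ collapsing into the single equation at the merged vertex is exactly the isomorphism $\psi$ the paper constructs for the positive-link case, and the negative-bridge case is disposed of by switching in both treatments. Your part (b) is in fact more self-contained than the paper's: where the paper simply invokes the flow-group structure theorem (Theorem~\ref{thm:flow-group-structure}), you derive $2a=0$ directly by combining the flow equation $2a+\partial{\bf f}_0(v)=0$ at $v$ with the signed-sum characterization of ${\rm B}_0$ of the balanced graph $\Sigma[X]\setminus e$ in Theorem~\ref{thm:boundary-group}(a), having identified the balanced case via Lemma~\ref{lem:co-loop}. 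That argument is complete.

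Part (e), however, has a genuine gap. You claim: ``$e$ lies in no circuit; hence no nonzero flow has $e$ in its support.'' Over an arbitrary abelian group ${\Bbb A}$ this implication is false, and part (b) of the very lemma you are proving is the counterexample: a negative loop that is a co-loop lies in no circuit of $\mathcal{M}(\Sigma)$ (Lemma~\ref{lem:co-loop characterization}), yet it carries the nonzero flows $a\cdot{\bf 1}_{\vec{e}}$ with $a\in{\rm Tor}_2({\Bbb A})\setminus 0$. The circuit characterization bounds supports of real- and integer-valued flows only; over ${\Bbb A}$, the flow group is generated by circuit flows together with ${\rm Tor}_2({\Bbb A})$-multiples of indicator chains of negative circles (Lemma~\ref{lem:negative-circle-flow}, Theorem~\ref{thm:flow-group-structure}), and a lone negative circle is not a circuit. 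To repair (e) you must actually use the hypothesis that $e$ is a bridge or an outer-edge, not merely that it is a co-loop. Either argue as the paper does: $\{e\}$ is then a bond, so ${\bf 1}_{\vec{e}}=\pm\delta(\nu\cdot{\bf 1}_X)$ for a suitable switching $\nu$ on the balanced piece $X$, whence every flow ${\bf f}$ satisfies ${\bf f}(\vec{e}\,)=\langle{\bf 1}_{\vec{e}},{\bf f}\rangle=\langle\pm\nu\cdot{\bf 1}_X,\partial{\bf f}\rangle=0$ by the adjointness in Lemma~\ref{Lem:Potential-is-tension}. Or note that a bridge or an outer-edge lies on no circle whatsoever, hence on no negative circle, so by Theorem~\ref{thm:flow-group-structure} it lies outside the support of every ${\Bbb A}$-valued flow. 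With either one-line addition, the rest of your part (e) (restriction to $\Sigma\setminus e$, and passage to $\Sigma/e$ via (c)/(d)) goes through.
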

\begin{proof}
(a) Since a positive loop is a circuit, according to the structure of ${\rm F}(\Sigma,{\Bbb A})$ in Theorem \ref{thm:circuits}, the isomorphism ${\rm F}(\Sigma,{\Bbb A})\cong
{\Bbb A}\oplus{\rm F}(\Sigma\setminus e,{\Bbb A})$ is given by the identity map, and the isomorphism preserves nowhere-zero flows. It follows a desired bijection for \eqref{eq:positive loop}.

(b) Since $e$ is both a negative loop and a co-loop, the edge $e$ is contained in a component of the form $\Sigma[X)$, and $\Sigma[X)\setminus e$ is balanced. It follows that $\Sigma[X)=\Sigma[X]$. According to Theorem \ref{thm:circuits}, ${\rm F}(\Sigma,{\Bbb A})\cong {\rm Tor}_2({\Bbb A})\cdot{\bf 1}_{\vec{e}}\oplus {\rm F}(\Sigma\setminus e,{\Bbb A})$, and the isomorphism preserves the nowhere-zero flows. It follows a desired bijection for \eqref{eq:both neg-loop and co-loop}.

(c) Let $v_1,v_2$ denote the endpoints of the positive link $e$, and be merged along $e$ into a new vertex $w$ of $\Sigma/e$. Denote by $e'/e$ the edge of $\Sigma/e$, obtained from $e'$ ($\neq e$) by contracting $e$. It is straightforward to see that
\[
\psi:{\rm F}(\Sigma,{\Bbb A})\rightarrow{\rm F}(\Sigma/e,{\Bbb A}),
\quad \vec{e}{\,'}\!/e\mapsto {\bf f}(\vec{e}{\,'})
\]
is a well-defined homomorphism. We claim that $\psi$ is a bijection. In fact, if $\psi({\bf f})=0$, then ${\bf f}(\vec{e}{\,'})=0$ for all $e'\neq e$; since $\bf f$ is a flow, it forces that ${\bf f}(\vec{e}\,)=0$; thus ${\bf f}=\bf 0$, so $\psi$ is injective. Given a flow ${\bf f}'$ of $\Sigma/e$. Let $E_i$ denote the set of edges of $\Sigma$ having an endpoint at $v_i$ and set
\[
a_i:=\sum_{e'\in E_i,\,e'\neq e}[v_i,\vec{e}{\,'}]{\bf f}'(\vec{e}{\,'}\!/e), \quad i=1,2.
\]
Then $\partial{\bf f}'(w)=0$ implies $a_1+a_2=0$. Now we define ${\bf f}(\vec{e}{\,'})={\bf f}'(\vec{e}{\,'}\!/e)$ for $e'\neq e$ and ${\bf f}(\vec{e}\,)=a_1$, where $\vec{e}$ points to $v_2$. Clearly, $\partial{\bf f}(v)=0$ for $v\neq v_1,v_2$; and
\begin{align*}
\partial{\bf f}(v_i)&=\sum_{e'\in E_i,\,e'\neq e}[v_i,\vec{e}{\,'}]{\bf f}(\vec{e}{\,'})+(-1)^ia_1\\
&=a_i+(-1)^ia_1=0, \quad i=1,2.
\end{align*}
This means that $\bf f$ is a flow of $\Sigma$. Clearly, $\psi({\bf f})={\bf f}'$ by the construction of $\bf f$. Thus $\psi$ is surjective. We have shown that $\psi$ is an isomorphism of ${\rm F}(\Sigma,{\Bbb A})$ onto ${\rm F}(\Sigma/e,{\Bbb A})$.

 Consider the isomorphism ${\rm F}_e(\Sigma,{\Bbb A})\cong {\rm F}(\Sigma\setminus e,{\Bbb A})$, given by deleting the $e$-coordinate. Its restriction (to nowhere-zero flows on $E\setminus e$) gives rise to a bijection
\[
{\rm F}_{e,\rm nz}(\Sigma,{\Bbb A})\cong {\rm F}_{\rm nz}(\Sigma\setminus e,{\Bbb A}).
\]
Note that a member ${\bf f}'$ of ${\rm F}(\Sigma/e,{\Bbb A})$ is nowhere-zero if and only if its corresponding flow ${\bf f}:=\psi^{-1}({\bf f}')\in{\rm F}(\Sigma,{\Bbb A})$ is nowhere-zero on $E\setminus e$, regardless of the value ${\bf f}(\vec{e}\,)$. It follows that $\bf f$ is either nowhere-zero on $E$, or $\bf f$ is zero on the edge $e$ but nowhere-zero on $E\setminus e$. We thus obtain a bijection of ${\rm F}_{\rm nz}(\Sigma/e,{\Bbb A})$ onto the disjoint union ${\rm F}_{\rm nz}(\Sigma,{\Bbb A})\cup {\rm F}_{e,\rm nz}(\Sigma,{\Bbb A})$.

(d) Let the bridge $e$ of $\Sigma$ connect two components of $\Sigma\smallsetminus e$. Choose a switching $\nu$ on one of the two components such that $e^\nu$ is positive in $\Sigma^\nu$. Then
${\rm F}(\Sigma^\nu,{\Bbb A})\cong {\rm F}(\Sigma^\nu/e^\nu,{\Bbb A})$; consequently, ${\rm F}(\Sigma,{\Bbb A})\cong {\rm F}(\Sigma/e,{\Bbb A})$.

(e) Let $\Sigma[X]$ denote a new balanced component, resulted from the removal of $e$ from $\Sigma$. Let $\nu$ be a switching on $X$ such that $\delta X^\nu=\{e\}$. Since $e$ is either a bridge or an outer-edge of $\Sigma$, we have ${\bf 1}_{\vec{e}}=\pm\delta({\nu\bf 1}_{X})$. Then for each flow ${\bf f}\in{\rm F}(\Sigma,{\Bbb A})$, we have
\[
{\bf f}(\vec{e}\,)=\langle{\bf 1}_{\vec{e}\,},{\bf f}\rangle
=\langle\pm\delta(\nu{\bf 1}_{X}),{\bf f}\rangle
=\langle\pm\nu{\bf 1}_{X},\partial{\bf f}\rangle =0.
\]
It follows that ${\rm F}(\Sigma,{\Bbb A})={\rm F}_{e}(\Sigma,{\Bbb A})$ and ${\rm F}_{e}(\Sigma,{\Bbb A})\cong {\rm F}(\Sigma\setminus e,{\Bbb A})$. If $e$ is a bridge, the isomorphism ${\rm F}(\Sigma,{\Bbb A})\cong {\rm F}(\Sigma/e,{\Bbb A})$ follows from (d). If $e$ is an outer-edge, then $\Sigma/e=\Sigma\setminus e$ and ${\rm F}(\Sigma/e,{\Bbb A})={\rm F}(\Sigma\setminus e,{\Bbb A})\cong {\rm F}(\Sigma,{\Bbb A})$.
\end{proof}

\begin{prop}\label{prop:deletion-contraction formula}
The polynomial $\varphi(\Sigma;t,x)$ satisfies the deletion-contraction formula:
\[
\varphi(\Sigma;t,x)=\left\{\begin{array}{ll}
(x-1)\varphi(\Sigma\setminus e;t,x) & \mbox{\rm if $e$ is a positive loop,}\\
(t-1)\varphi(\Sigma\setminus e;t,x) & \mbox{\rm if $e$ is a negative loop and co-loop,}\\
\varphi(\Sigma/e;t,x)-\varphi(\Sigma\setminus e;t,x) & \mbox{\rm if $e$ is a positive link.}
\end{array}\right.
\]
Moreover, $\varphi=0$ if and only if $\Sigma$ contains a bridge co-loop or an outer-edge co-loop.
\end{prop}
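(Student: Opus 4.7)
The plan is to separate the proposition into the three deletion-contraction identities and the vanishing criterion, and to deduce each from Lemma \ref{lem:deletion-contraction-flow-group} together with the uniqueness of the bivariate flow polynomial established by Theorem \ref{thm:bivariate-flow-poly} and Lemma \ref{lem:polynomial-unique}. For the three identities, I will count $|{\rm F}_{\rm nz}(\Sigma,{\Bbb A})|$ over an arbitrary finite abelian group ${\Bbb A}$ using the bijections in parts (a), (b), (c) of Lemma \ref{lem:deletion-contraction-flow-group}: a positive loop yields $|{\rm F}_{\rm nz}(\Sigma,{\Bbb A})|=(|{\Bbb A}|-1)\,|{\rm F}_{\rm nz}(\Sigma\setminus e,{\Bbb A})|$; a negative-loop co-loop yields $|{\rm F}_{\rm nz}(\Sigma,{\Bbb A})|=(|{\rm Tor}_2{\Bbb A}|-1)\,|{\rm F}_{\rm nz}(\Sigma\setminus e,{\Bbb A})|$; and a positive link yields $|{\rm F}_{\rm nz}(\Sigma,{\Bbb A})|=|{\rm F}_{\rm nz}(\Sigma/e,{\Bbb A})|-|{\rm F}_{\rm nz}(\Sigma\setminus e,{\Bbb A})|$. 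Substituting the defining evaluation \eqref{eq:flow-poly-pattern} and letting ${\Bbb A}$ range through groups such as ${\Bbb Z}_2^k\oplus{\Bbb Z}_p$ with $k\ge 0$ and $p$ varying over odd primes, the numerical identities hold at infinitely many distinct pairs $(|{\rm Tor}_2{\Bbb A}|,|{\Bbb A}|)$, so Lemma \ref{lem:polynomial-unique} promotes each to the asserted polynomial identity.

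For the vanishing criterion, the forward direction is immediate from Lemma \ref{lem:deletion-contraction-flow-group}(e): if $e$ is a bridge co-loop or outer-edge co-loop, every flow of $\Sigma$ vanishes on $e$, so $|{\rm F}_{\rm nz}(\Sigma,{\Bbb A})|=0$ for every finite ${\Bbb A}$ and $\varphi\equiv 0$. For the converse I will assume $\Sigma$ has no bridge or outer-edge co-loop and produce a finite group over which $\Sigma$ admits a nowhere-zero flow. The crux is the dichotomy that under this hypothesis every edge $e$ of $\Sigma$ either lies in some signed-graph circuit or lies on some negative circle: if $e$ is not a matroid co-loop, Lemma \ref{lem:co-loop characterization} places $e$ in a circuit; if $e$ is a co-loop, the hypothesis excludes types (a) and (b) of Lemma \ref{lem:co-loop}, so $e$ is of type (c), and then $\Sigma[X]$ unbalanced together with $\Sigma[X]\setminus e$ balanced forces every negative circle of $\Sigma[X]$ to pass through $e$.

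Given this dichotomy, I construct a nowhere-zero flow over ${\Bbb A}:={\Bbb Z}_6^{|E|}$ as follows. For each $e\in E$ I pick ${\bf f}_e\in{\rm F}(\Sigma,{\Bbb Z}_6)$ with ${\bf f}_e(\vec{e}\,)\neq 0$: if $e$ lies on a circuit $C$, take the mod-$6$ reduction of the circular flow ${\bf I}_{\omega(C)}$ from \eqref{eq:circuit flow}, whose value at $\vec{e}$ lies in $\{\pm 1,\pm 2\}$; otherwise $e$ lies on a negative circle $C_0$, and I take $3\cdot{\bf 1}_{\omega(C_0)}$, which is a flow by Lemma \ref{lem:negative-circle-flow} since $3\in{\rm Tor}_2({\Bbb Z}_6)$ and takes the value $3\neq 0$ at $\vec{e}$. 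Assembling these into the direct-sum flow ${\bf f}:=({\bf f}_e)_{e\in E}\in{\rm F}(\Sigma,{\Bbb Z}_6^{|E|})$, the $e'$-th coordinate of ${\bf f}(\vec{e}{\,'})$ equals ${\bf f}_{e'}(\vec{e}{\,'})\neq 0$ for every $e'$, so ${\bf f}$ is nowhere-zero and $\varphi(\Sigma;2^{|E|},6^{|E|})>0$, proving $\varphi\not\equiv 0$. The main obstacle will be the type (c) analysis in the converse: one must recognise that a matroid co-loop which is neither a bridge nor an outer-edge can only come from Lemma \ref{lem:co-loop}(c) and must be detected by a ${\rm Tor}_2$-valued flow supported on a negative circle rather than by any honest circuit flow, which is precisely the role played by the ${\rm Tor}_2$ summand in Theorem \ref{thm:flow-group-structure}.
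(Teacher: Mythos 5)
Your proposal is correct, and while the three deletion--contraction identities and the forward direction of the vanishing criterion follow the paper's route exactly (counting $|{\rm F}_{\rm nz}(\Sigma,{\Bbb A})|$ via Lemma~\ref{lem:deletion-contraction-flow-group}(a)--(c),(e) and invoking uniqueness of $\varphi$), your converse of the vanishing criterion is genuinely different from the paper's. The paper argues by a two-step contradiction: first, if $\Sigma$ had no co-loops at all, it greedily covers $E$ by circuits $C_0,\ldots,C_k$ and forms the integer flow $\sum_i 2^i{\bf I}_{\omega(C_i)}$, whose dyadic coefficients prevent cancellation; second, if co-loops exist but are all of type (c), it deletes \emph{all} of them, asserts (without proof) that the resulting graph $\Sigma'$ has no co-loops, extracts a nowhere-zero flow of $\Sigma'$, and patches it over ${\Bbb Z}_2\times{\Bbb A}$ with ${\rm Tor}_2$-flows on negative circles through the deleted co-loops. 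Your argument replaces both steps by a single uniform construction: under the hypothesis, every edge lies on a circuit or (if it is a type-(c) co-loop) on a negative circle, so you take one flow per edge, nonzero at that edge --- a mod-$6$ circular flow or $3\cdot{\bf 1}_{\omega(C_0)}$ --- and assemble them coordinatewise over ${\Bbb Z}_6^{|E|}$. This buys two things: cancellation is impossible by construction (no dyadic weights or group extensions needed), and you never need the paper's unproved claim that deleting all co-loops creates no new ones, which is the most delicate point of the paper's argument.

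One small imprecision you should repair: Lemma~\ref{lem:polynomial-unique} requires agreement on a \emph{grid} of points $(p_i,q_j)$, but your family ${\Bbb Z}_2^k\oplus{\Bbb Z}_p$ realizes only pairs $(2^k,2^kp)$, and by unique factorization no two distinct values of $k$ can share a common second coordinate, so these pairs contain no grid with more than one row. Either choose groups realizing an honest grid (for instance $p_i=2^i$, $0\le i\le m$, and $q_j=2^m(j+1)$, realized by ${\Bbb Z}_2^{\,i}\oplus{\Bbb Z}_{2^{m-i}(j+1)}$), or repeat the lemma's iterated one-variable argument: for each fixed $k$ the difference vanishes at infinitely many $x$, hence each coefficient polynomial in $t$ vanishes at infinitely many $2^k$. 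This is the same looseness present in the paper's own uniqueness proof of Theorem~\ref{thm:bivariate-flow-poly}, and it is fixable in one line, so it does not affect the substance of your argument.
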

\begin{proof}
The first deletion-contraction formula follows from \eqref{eq:positive loop}; the second follows from \eqref{eq:both neg-loop and co-loop}; and the third follows from \eqref{eq:positive link edge0} and \eqref{eq:positive link edge1}.

``$\Leftarrow$": Let $e$ be a bridge co-loop or an outer-edge co-loop of $\Sigma$. Denote by $\Sigma[X]$ the balanced component of $\Sigma\setminus e$ resulted by the removal of $e$ from $\Sigma$. Choose a switching $\nu$ (unique up to sign) on $X$ such that all edges of $\Sigma^\nu[X]$ are positive. Then ${\bf 1}_{\vec{e}}=\pm\delta(\nu\cdot{\bf 1}_X)$, where the arc $\vec{e}$ points to $X$. Thus for each flow $\bf f$ of $\Sigma$,
\[
{\bf f}(\vec{e}\,)=\langle{\bf 1}_{\vec{e}},{\bf f}\rangle =\langle\pm\delta(\nu\cdot{\bf 1}_X),{\bf f}\rangle
=\langle\pm\nu\cdot{\bf 1}_X,\partial{\bf f}\rangle=0.
\]
It follows that ${\rm F}_{\rm nz}(\Sigma,{\Bbb A})=\varnothing$. Consequently, $\varphi(\Sigma;t,x)=0$.

``$\Rightarrow$": Let $\varphi(\Sigma;t,x)=0$. Then ${\rm F}_{\rm nz}(\Sigma,{\Bbb A})=\varnothing$ for every $\Bbb A$ finite nonzero abelian group. Suppose $\Sigma$ contains no co-loops, that is, every edge is not a co-loop. Then by Proposition \ref{lem:co-loop characterization}, every edge $e$ is contained in a circuit of $\Sigma$. Choose an edge $e_0\in\Sigma$ and a circuit $C_0$, with a direction $\omega(C_0)$ and $e_0\in C_0$. Next choose an edge $e_1\not\in C_1$ and a circuit $C_1$, with a direction $\omega(C_1)$ and $e_1\in C_1$. The choose a edge $e_2\not\in C_1\cup C_2$ and circuit $C_2$, with a direction $\omega(C_2$ and $e_2\in C_2$. Continue this procedure; we obtain circuits $C_i$, with directions $\omega(C_i)$, such that $E=\cup_{i=0}^kC_i$. Then
\[
{\bf f}:=\sum_{i=0}^k 2^i{\bf I}_{\omega(C_i)}
\]
is a nowhere-zero integer-valued flow. So ${\rm F}_{\rm nz}(\Sigma,{\Bbb Z}_n)\neq\varnothing$ for $n$ large enough, contradictory to the emptiness of ${\rm F}_{\rm nz}(\Sigma,{\Bbb Z}_n)$. We have shown that $\Sigma$ contains co-loops.

Let $e^*_1,\ldots,e^*_\ell$ denote the co-loops of $\Sigma$. Denote by $\Sigma[X_j)$ the component of $\Sigma$ that contains $e^*_j$. Suppose every member of $\{e_1^*,\ldots,e^*_\ell\}$ is neither a bridge nor an outer-edge. Then $\Sigma[X_j]=\Sigma[X_j)$ is unbalanced and $\Sigma[X_j]\setminus e^*_j$ is balanced for all $j$. Thus $\Sigma':=\Sigma\setminus\bigcup_je_j^*$ contains no co-loops. Repeating the argument in the previous paragraph, we see that ${\rm F}_{\rm nz}(\Sigma',{\Bbb A})$ is nonempty. Choose a nowhere-zero flow ${\bf f}'$ of $\Sigma'$ with coefficients in $\Bbb A$. Choose a negative circle $C_j$ of each $\Sigma[X_j]$ with $e^*_j\in C_j$. Then $a\cdot {\bf 1}_{\omega(C_j)}$ is a flow with coefficients in ${\Bbb Z}_2\cong\{0,a\}$, where $2a=0$. Thus
\[
{\bf f}:={\bf f}'+\sum_{j=1}^\ell a\cdot {\bf 1}_{\omega(C_j)}
\]
is a nowhere-zero flow of $\Sigma$ with coefficients in the group ${\Bbb Z}_2\times{\Bbb A}$. Hence ${\rm F}_{\rm nz}(\Sigma,{\Bbb Z}_2\times{\Bbb A})\neq\varnothing$. Of course, $\varphi(\Sigma;t,x)\neq 0$, which is a contradiction. We conclude that one of the co-loops $e^*_1,\ldots,e^*_\ell$ is either a bridge or an outer-edge of $\Sigma$
\end{proof}

Let $\Sigma_{m,n}$ denote the signed graph with $m$ outer-edges and $n$ negative loops at a single vertex, where $m,n\geq 0$. Set $\varphi_{m,n}(t,x):=\varphi(\Sigma_{m,n};t,x)$ with convention $\varphi_{0,0}(t,x)\equiv1$.

\begin{prop}\label{prop:Recurrences}
The sequence $\varphi_{m,n}(t,x)$ of polynomials, where $m,n\geq 0$, satisfies the following initial conditions and recurrence relations:
\begin{enumerate}[\hspace{5mm}\rm (a)]
\item
$\varphi_{1,0}=0$, $\varphi_{0,1}=t-1$, $\varphi_{1,1}=x-t$.

\item
$\varphi_{0,n}=t(x-1)^{n-1}-\varphi_{0,n-1}$,\; $n\geq 1$.

\item
$\varphi_{m,n}=(x-1)^{m+n-1}-\varphi_{m-1,n}$,\; $m\geq 1,n\geq 0$.
\end{enumerate}
\end{prop}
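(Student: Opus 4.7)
The plan is to derive the initial values in (a) directly from Proposition \ref{prop:deletion-contraction formula}, and then to prove both recurrences (b) and (c) by a single bijective argument that partitions a counted set of flows according to the value on one designated edge. In $\Sigma_{1,0}$ the outer-edge is an outer-edge co-loop (taking $X=V$ gives $\delta X^{\rm id}=O_V=\{e_1\}$), so the last clause of Proposition \ref{prop:deletion-contraction formula} gives $\varphi_{1,0}=0$. In $\Sigma_{0,1}$ the unique negative loop $e$ is a co-loop (the singleton $\{e\}$ is the unique minimal cut, since removing $e$ yields the balanced one-vertex graph), so clause (b) of the same proposition yields $\varphi_{0,1}=(t-1)\varphi_{0,0}=t-1$ using the convention $\varphi_{0,0}\equiv 1$. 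Finally $\varphi_{1,1}$ drops out of (c) at $m=n=1$: $\varphi_{1,1}=(x-1)-\varphi_{0,1}=x-t$.

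For (b) and (c), fix a finite abelian group ${\Bbb A}$ and set $t=|{\rm Tor}_2({\Bbb A})|$, $x=|{\Bbb A}|$; by Theorem \ref{thm:bivariate-flow-poly} together with Lemma \ref{lem:polynomial-unique} it is enough to verify each recurrence as a counting identity in ${\Bbb A}$. Orient every outer-edge with its $v$-arrow pointing toward $v$ and every negative loop with both arrows pointing toward $v$; by \eqref{eq:incidence-function} the boundary contribution is $1$ per outer-edge and $2$ per negative loop. For (b), a flow of $\Sigma_{0,n}$ is then a tuple $(a_1,\ldots,a_n)\in{\Bbb A}^n$ with $a_1+\cdots+a_n\in{\rm Tor}_2({\Bbb A})$. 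Enumerate the flows satisfying $a_1,\ldots,a_{n-1}\neq 0$ (with no constraint on $a_n$ beyond the flow equation): for each of the $(x-1)^{n-1}$ choices of $(a_1,\ldots,a_{n-1})\in({\Bbb A}\setminus\{0\})^{n-1}$, the remaining coordinate $a_n$ is determined up to an element of ${\rm Tor}_2({\Bbb A})$, giving exactly $t$ possibilities, for a total of $t(x-1)^{n-1}$. Splitting by $a_n\neq 0$ (the nowhere-zero flows of $\Sigma_{0,n}$, counted by $\varphi_{0,n}$) versus $a_n=0$ (which corresponds bijectively to nowhere-zero flows of $\Sigma_{0,n-1}$ by forgetting the last coordinate, counted by $\varphi_{0,n-1}$) proves (b).

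The argument for (c) is parallel. A flow of $\Sigma_{m,n}$ is a tuple $(a_1,\ldots,a_m,b_1,\ldots,b_n)$ with $a_1+\cdots+a_m+2(b_1+\cdots+b_n)=0$; enumerating those with $a_1,\ldots,a_{m-1}\neq 0$ and $b_1,\ldots,b_n\neq 0$ (so that $a_m$ is uniquely determined by the flow equation) yields exactly $(x-1)^{m+n-1}$ such tuples. Partitioning by $a_m\neq 0$ (counted by $\varphi_{m,n}$) versus $a_m=0$ (which after deleting $e_m$ yields a nowhere-zero flow of $\Sigma_{m-1,n}$, counted by $\varphi_{m-1,n}$) delivers (c). No step is genuinely difficult; the only point demanding a little care is the promotion of these counting identities to identities of bivariate polynomials, for which the pairs $(t,x)=(|{\rm Tor}_2{\Bbb A}|,|{\Bbb A}|)$ realized by ${\Bbb A}={\Bbb Z}_2^{\,k}\times{\Bbb Z}_m$ (with $k\ge 0$ and $m\ge 1$ odd) supply enough distinct evaluation points for Lemma \ref{lem:polynomial-unique} to apply.
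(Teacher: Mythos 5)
Your proposal is correct and, in its substantive parts (b) and (c), is essentially the paper's own proof: both arguments realize the flow group of the one-vertex graph as the solution set of the single equation $\sum_i y_i + 2\sum_j z_j = 0$ at the vertex, count the solutions that are nonzero on all but one designated edge (getting $t(x-1)^{n-1}$ resp.\ $(x-1)^{m+n-1}$, since the designated coordinate is determined up to ${\rm Tor}_2({\Bbb A})$ resp.\ uniquely), and then split according to whether that coordinate vanishes, identifying the vanishing case with ${\rm F}_{\rm nz}$ of the smaller graph. The only difference is cosmetic and harmless: for the initial values in (a) the paper enumerates ${\rm F}_{\rm nz}(1,0)$, ${\rm F}_{\rm nz}(0,1)$ and ${\rm F}_{\rm nz}(1,1)$ directly from the flow equations, whereas you obtain $\varphi_{1,0}$ and $\varphi_{0,1}$ from Proposition~\ref{prop:deletion-contraction formula} (legitimate, as it precedes this proposition) and then read off $\varphi_{1,1}$ from the already-proved recurrence (c).
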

\begin{proof}
Choose orientations on edges such that all point to the unique vertex. Let $y_i$ denote the value on the $i$th outer-edge arc, and $z_j$ be the value of the $j$th negative loop arc, where $1\leq i\leq m$, $1\leq j\leq n$. Let ${\rm F}(m,n)$ denote the flow group of $\Sigma_{m,n}$ with coefficients in $\Bbb A$.

(a) Clearly, ${\rm F}_{\rm nz}(1,0)=\varnothing$, ${\rm F}_{\rm nz}(0,1)={\rm Tor}_2({\Bbb A})\setminus\{0\}$. Note that $\Sigma(1,1)$ is the solution set of the equation $y_1+2z_1=0$ over $\Bbb A$. If $z_1\in{\rm Tor}_2({\Bbb A})$, then $y_1=-2z_1=0$, there is no nowhere-zero solution. Let $z_1\in{\Bbb A}\setminus {\rm Tor}_2({\Bbb A})$. Then $y_1=-2z_1\neq 0$. Thus
\[
{\rm F}_{\rm nz}(1,1)=\{(-2z_1,z_1): z_1\in{\Bbb A}\setminus{\rm Tor}_2({\Bbb A})\}\simeq {\Bbb A}\setminus{\rm Tor}_2({\Bbb A}).
\]
It follows that $\varphi_{1,1}=x-t$.

(b) Let $m=0$ and $n\geq 2$. The flow group ${\rm F}(0,n)$ is the solution set of the equation
\begin{equation*}
2z_1+\cdots+2z_n=0
\end{equation*}
over $\Bbb A$. Let $z_1,\ldots,z_{n-1}\in{\Bbb A}$ and set $a_n:=z_1+\cdots+z_{n-1}$. We claim that all solutions of the equation $2z_n+2a_n=0$ for the variable $z_n$ are given by
\begin{equation}\label{eq:znaan}
z_n=a-a_n,\quad \mbox{where}\quad a\in{\rm Tor}_2({\Bbb A}).
\end{equation}
In fact, the solutions given by \eqref{eq:znaan} are clearly solutions of $2z_n+2a_n=0$. Let $z_n=s$ be any solution, that is,
\[
2s+2a_n=2(s+a_n)=0.
\]
This means that $s+a_n\in{\rm Tor}_2({\Bbb A})$. Write $s+a_n=a$ with $a\in{\rm Tor}_2({\Bbb A})$. Then $s=a-a_n$ with $a\in{\rm Tor}_2({\Bbb A})$ has the required form. We thus obtain an isomorphism $\psi:{\rm F}(0,n)\rightarrow{\Bbb A}^{n-1}\times{\rm Tor}_2({\Bbb A})$,
\[
\psi(z_1,\ldots,z_n)=(z_1,\ldots,z_{n-1},a),\; \mbox{where}\; a=z_n+\mbox{$\sum_{i=1}^{n-1}$}z_i.
\]
Let $S=({\Bbb A}\setminus 0)^{n-1}\times{\rm Tor}_2({\Bbb A})$. Then $\psi({\rm F}_{\rm nz}(0,n))$ consists of the members $(z_1,\ldots,z_{n-1},a)\in S$ such that $a\neq\sum_{i=1}^{n-1}z_i$.
On the other hand, ${\rm F}_{\rm nz}(0,n-1)$ can be identified as a subset of ${\rm F}(0,n)$, defined by
\[
{\rm F}_{\rm nz,0}(0,n):={\rm F}(0,n)\cap(({\Bbb A}\setminus 0)^{n-1}\times 0).
\]
Then we have
\begin{align*}
\psi({\rm F}_{\rm nz}(0,n))&=\big\{(a_1,\ldots,a_{n-1},a)\in S: a\neq\mbox{$\sum_{i=1}^{n-1}$}a_i\big\},\\
\psi({\rm F}_{\rm nz,0}(0,n))&=\big\{(a_1,\ldots,a_{n-1},a)\in S: a=\mbox{$\sum_{i=1}^{n-1}$}a_i\big\}.
\end{align*}
It follows that $\psi({\rm F}_{\rm nz}(0,n))=S\setminus \psi({\rm F}_{\rm nz,0}(0,n))$. Hence
\[
|{\rm F}_{\rm nz}(0,n)|=|{\rm Tor}_2{\Bbb A}|\cdot(|{\Bbb A}|-1)^{n-1}-|{\rm F}_{\rm nz}(0,n-1)|.
\]
We obtain the recurrence $\varphi_{0,n}=t(x-1)^{n-1}-\varphi_{0,n-1}$.

(c) The case that $m=1$ and $n=0$ is trivial by (a). Let $m\geq 1$ and $m+n\geq 2$. The flow group ${\rm F}(m,n)$ is the solution set of the equation
\[
y_m+\sum_{i=1}^{m-1}y_i+2\sum_{j=1}^nz_j=0
\]
over $\Bbb A$. Notice that $y_m=-\sum_{i=1}^{m-1}y_i-2\sum_{j=1}^nz_j$ is determined by $y_1,\ldots,y_{m-1},z_1,\ldots,z_n$. Let $y_i,z_j\in{\Bbb A}\setminus\{0\}$, where $1\leq i\leq m-1$ and $1\leq j\leq n$. If
\[
\sum_{i=1}^{m-1}y_i+2\sum_{j=1}^nz_j=0 \quad \mbox{[solution set ${\rm F}(m-1,n)$]},
\]
there is no choice for $y_m$ to be nonzero. However, if  $\sum_{i=1}^{m-1}y_i+2\sum_{j=1}^nz_j\neq 0$, there is exactly one choice for $y_m$ to be nonzero. It follows that
\[
{\rm F}_{\rm nz}(m,n)\simeq ({\Bbb A}\setminus 0)^{m+n-1}\setminus {\rm F}_{\rm nz}(m-1,n).
\]
Thus we conclude the recurrence $\varphi_{m,n}=(x-1)^{m+n-1}-\varphi_{m-1,n}$.
\end{proof}

\begin{cor}\label{cor:varphi-mn}
The sequence $\varphi_{m,n}(t,x)$ of polynomials, where $m,n\geq 0$, has the following explicit expressions
\begin{align*}
\varphi_{m,0}&=\frac{(x-1)^m-(-1)^m}{x}+(-1)^m,\\
\varphi_{0,n}&=t\cdot\frac{(x-1)^n-(-1)^n}{x}+(-1)^n,\\
\varphi_{m,n}&=(x-1)^n\cdot\frac{(x-1)^m-(-1)^m}{x}
+(-1)^mt\cdot\frac{(x-1)^n-(-1)^n}{x} +(-1)^{m+n}.
\end{align*}
\end{cor}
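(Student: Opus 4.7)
The plan is to solve the recurrences in Proposition \ref{prop:Recurrences} directly; since both recurrences are first-order linear in the inhomogeneous term with constant coefficient $-1$, everything reduces to a geometric-series telescoping.

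First I would handle the ``boundary'' sequences $\varphi_{m,0}$ and $\varphi_{0,n}$. Starting from $\varphi_{0,0}\equiv 1$ and iterating $\varphi_{m,0}=(x-1)^{m-1}-\varphi_{m-1,0}$ down to $m=0$ gives
\[
\varphi_{m,0}=\sum_{k=0}^{m-1}(-1)^k(x-1)^{m-1-k}+(-1)^m.
\]
The sum equals $(-1)^{m-1}\sum_{j=0}^{m-1}(-(x-1))^{-j}\cdot(-(x-1))^{m-1}$ after reindexing, and evaluates via the finite geometric formula $\sum_{j=0}^{m-1}(-(x-1))^{j}=\bigl(1-(-1)^m(x-1)^m\bigr)/x$ to $\bigl((x-1)^m-(-1)^m\bigr)/x$, yielding the first line of the corollary. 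Exactly the same telescoping applied to $\varphi_{0,n}=t(x-1)^{n-1}-\varphi_{0,n-1}$ (where the inhomogeneous term now carries a factor $t$) produces the second line.

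For the general case, I would iterate the $m$-recurrence $\varphi_{m,n}=(x-1)^{m+n-1}-\varphi_{m-1,n}$ all the way down to $\varphi_{0,n}$ (rather than to $\varphi_{0,0}$), obtaining
\[
\varphi_{m,n}=\sum_{k=0}^{m-1}(-1)^k(x-1)^{m+n-1-k}+(-1)^m\varphi_{0,n}.
\]
Factoring $(x-1)^n$ out of the sum and reusing the geometric-series identity above gives the first term $(x-1)^n\cdot\bigl((x-1)^m-(-1)^m\bigr)/x$, and substituting the already-derived closed form for $\varphi_{0,n}$ into $(-1)^m\varphi_{0,n}$ produces the remaining two terms $(-1)^m t\bigl((x-1)^n-(-1)^n\bigr)/x+(-1)^{m+n}$. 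Combining completes the formula.

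There is no real obstacle here; the derivation is purely mechanical once the recurrences of Proposition \ref{prop:Recurrences} are in hand. The only minor bookkeeping concern is keeping the signs and the shift of index consistent when reindexing the telescoping sum, and ensuring that the small initial cases $(m,n)\in\{(0,0),(1,0),(0,1),(1,1)\}$ indeed match the stated closed forms (which they do: e.g.\ for $(m,n)=(1,1)$ one recovers $(x-1)\cdot 1+(-t)\cdot 1+1=x-t$).
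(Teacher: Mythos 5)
Your proposal is correct and takes essentially the same route as the paper: the paper's own proof is a one-line instruction to apply recurrences (b) and (c) of Proposition~\ref{prop:Recurrences}, and your telescoping/geometric-series computation (with $\varphi_{m,0}$ obtained as the $n=0$ case of recurrence (c), and $\varphi_{m,n}$ obtained by iterating (c) down to $\varphi_{0,n}$) is exactly the mechanical fleshing-out of that instruction. The sign bookkeeping and the checks of the initial cases are all accurate.
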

\begin{proof}
Apply the recurrence relations (b) and (c) of Proposition~\ref{prop:Recurrences} to obtain the formulas for  $\varphi_{0,n}$ and $\varphi_{m,n}$ respectively.
\end{proof}

\begin{defn}
The {\em negative cycle-rank} of $\Sigma$, denoted ${\rm cr}^-(\Sigma)$, is the minimal cardinality $|S|$ of an edge subset $S$ such that $\Sigma\setminus S$ contains no negative circles. The {\em positive cycle-rank} of $\Sigma$ is the number
\begin{equation}
{\rm cr}^+(\Sigma):={\rm cr}_{gr}(\Sigma)-{\rm cr}^-(\Sigma),
\end{equation}
where ${\rm cr}_{gr}(\Sigma)$ denotes the cycle-rank of the underlying unsigned graph $(V(\Sigma),E(\Sigma))$.

The {\em $i$th negative cycle-rank} of $\Sigma$ is the minimal cardinality $|S|$ of an edge subset such that $\Sigma\smallsetminus S$ contains exactly $i$ unbalanced components without outer edges.
\end{defn}

The negative cycle-rank ${\rm cr}^-(\Sigma)$ is known as the {\em frustration} of $\Sigma$ in the literature, see \cite{Zaslavsky-signed-graphs-DAM}.

\begin{thm}[Degree Property]
Let $\Sigma$ be connected and $\varphi(\Sigma;t,x)\neq 0$. Then $\varphi(\Sigma;t,x)$ can be written as the form
\begin{equation}\label{eq:flow-poly-t}
\varphi(\Sigma;t,x)=\sum_{i=0}^{d}t^i\varphi_i(\Sigma,x)
\end{equation}
with $\varphi_d\neq 0$ and $d\leq {\rm cr}^-(\Sigma)$ (negative cycle-rank of $\Sigma$). Moreover,
\begin{enumerate}[\hspace{5mm}\rm (a)]
\item
If $\Sigma$ contains no negative circles, then $\varphi(\Sigma;t,x)=\varphi_0(\Sigma,x)$ and is a monic polynomial of degree ${\rm cr}(\Sigma)$.

\item
If $\Sigma$ contains outer-edges, then $\varphi_0(\Sigma,x)$ is monic polynomial of degree ${\rm cr}(\Sigma)$.

\item
If $\Sigma$ contains negative circles and no outer-edges, then the degree of $\varphi_0$ is bounded by ${\rm cr}^+(\Sigma)$, and  $\varphi_0$ can be zero.

\item
If $\Sigma$ contains negative circles and no outer-edges, then $\varphi_1$ is a monic polynomial of degree ${\rm cr}(\Sigma)$.
\end{enumerate}
\end{thm}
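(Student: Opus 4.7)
The whole analysis rests on the bivariate expansion
\begin{equation*}
\varphi(\Sigma;t,x)=\sum_{S\subseteq E}(-1)^{|E\setminus S|}t^{u_c(S)}x^{{\rm cr}(S)}
\end{equation*}
from Theorem~\ref{thm:bivariate-flow-poly}, so that $\varphi_i(\Sigma,x)=\sum_{S:\,u_c(S)=i}(-1)^{|E\setminus S|}x^{{\rm cr}(S)}$. To see $d\le{\rm cr}^-(\Sigma)$, I would note that any $S$ with $u_c(S)=k$ supplies $k$ pairwise vertex-disjoint negative circles of $\Sigma|S$ (one from each compact unbalanced component), and any hitting set for the negative circles of $\Sigma$ intersected with $S$ remains a hitting set for those of $\Sigma|S$, giving $k\le{\rm cr}^-(\Sigma|S)\le{\rm cr}^-(\Sigma)$ by the trivial packing-covering inequality.

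The matroid identity ${\rm cr}(S)=r^*(E)-r^*(E\setminus S)$ from \eqref{eq:cycle-rank-formula} pins down when ${\rm cr}(S)={\rm cr}(\Sigma)$: this occurs precisely when $E\setminus S$ consists entirely of co-loops of $\mathcal{M}(\Sigma)$. For parts (a) and (b), I would argue that $\Sigma$ has \emph{no} co-loops at all: the hypothesis $\varphi\neq0$ together with Proposition~\ref{prop:deletion-contraction formula} kills types (a) and (b) (outer-edge and bridge co-loops) in both scenarios, while type (c) in Lemma~\ref{lem:co-loop} needs a compact unbalanced component, which is absent in (a) because $\Sigma$ has no negative circles and impossible in (b) because the only component of $\Sigma$ carries outer-edges. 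Hence $S=E$ is the unique subset attaining ${\rm cr}(S)={\rm cr}(\Sigma)$, contributing the monic term $+x^{{\rm cr}(\Sigma)}$ to $\varphi_0$ (since $u_c(E)=0$ in both cases); in (a) the stronger identity $\varphi=\varphi_0$ follows because $u_c(S)=0$ for every $S$.

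For (c), with $\Sigma$ connected, unbalanced, and compact, the condition $u_c(S)=0$ is equivalent to $\Sigma|S$ being balanced. I would run an augmentation argument: given such $S$ with $c(S)\ge2$, connectedness of $\Sigma$ supplies an edge $e\in E\setminus S$ joining two components of $\Sigma|S$, and since $e$ creates no new cycle the subgraph $\Sigma|(S\cup e)$ remains balanced while the invariant $|S|-|V|+c(S)={\rm cr}(S)$ is preserved. Iterating produces a connected balanced spanning $\Sigma|S'$ with ${\rm cr}(S')={\rm cr}(S)$, and since $E\setminus S'$ must hit every negative circle of $\Sigma$, the inequality $|S'|\le|E|-{\rm cr}^-(\Sigma)$ gives ${\rm cr}(S)=|S'|-|V|+1\le{\rm cr}^+(\Sigma)$. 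For (d), any $S$ contributing to the leading coefficient of $\varphi_1$ satisfies ${\rm cr}(S)={\rm cr}(\Sigma)$ and $u_c(S)=1$; if $S\neq E$ then some $e\in E\setminus S$ is a co-loop, necessarily of type (c) since $\varphi\neq0$, and the defining property that $\Sigma\setminus e$ is balanced forces $\Sigma|S\subseteq\Sigma\setminus e$ to be balanced too, so $u_c(S)=0$, contradiction. Thus only $S=E$ contributes, with coefficient $+1$, making $\varphi_1$ monic of degree ${\rm cr}(\Sigma)$. The assertion that $\varphi_0$ can vanish in (c) I expect to verify by exhibiting a small explicit signed graph where the alternating sum $\sum_{S:\,\Sigma|S\text{ balanced},\,{\rm cr}(S)={\rm cr}^+(\Sigma)}(-1)^{|E\setminus S|}$ collapses to zero; constructing such an example and confirming the cancellation is the most delicate combinatorial step of the whole argument.
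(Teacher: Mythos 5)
Your proposal is correct in substance, and for the monicity claims it takes a genuinely different route from the paper. The paper proves (a), (b), (d) by induction on the number of edges: it first computes the one-vertex base cases $\Sigma_{m,n,p}$ explicitly (via Proposition~\ref{prop:Recurrences} and Corollary~\ref{cor:varphi-mn}), then applies the deletion-contraction identity $\varphi_i(\Sigma,x)=\varphi_i(\Sigma/e,x)-\varphi_i(\Sigma\setminus e,x)$ along an edge $e$ of a positive spanning tree, splitting into the cases where $\Sigma\setminus e$ is connected or disconnected and comparing degrees of the two terms. You instead read the leading term directly off the expansion \eqref{eq:expansion}: the dual-rank identity \eqref{eq:cycle-rank-formula} shows that ${\rm cr}(S)={\rm cr}(\Sigma)$ holds exactly when every edge of $E\setminus S$ is a co-loop of $\mathcal{M}(\Sigma)$, and the co-loop classification (Lemma~\ref{lem:co-loop}, Lemma~\ref{lem:co-loop characterization}, Proposition~\ref{prop:deletion-contraction formula}) then pins down which subsets can attain top $x$-degree in each $t$-slice: only $S=E$ in (a) and (b), since $\varphi\neq0$ excludes bridge and outer-edge co-loops and the structure of $\Sigma$ excludes type-(c) co-loops; and in (d) a type-(c) co-loop in $E\setminus S$ would make $\Sigma\setminus e$ balanced, hence $\Sigma|S$ balanced and $u_c(S)=0$, so again only $S=E$ contributes at degree ${\rm cr}(\Sigma)$ with coefficient $+1$. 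This is a uniform, induction-free argument, and it buys a notably cleaner treatment of (d): the paper's induction there juggles ${\rm cr}_{gr}$ and ${\rm cr}$, which differ by $1$ for compact unbalanced graphs, whereas your computation lands unambiguously on ${\rm cr}(\Sigma)$ (the correct value, as the example $\varphi_1(\Sigma_{0,1},x)=1$ of degree $0={\rm cr}(\Sigma_{0,1})$ confirms). Your bound $d\le{\rm cr}^-(\Sigma)$ and your degree bound in (c) match the paper's in substance; the paper passes to a minimal balancing set $S_0$ and shows $\Sigma\setminus S_0$ is connected, while you augment a balanced spanning subgraph to a connected one, and both give ${\rm cr}(S)\le{\rm cr}_{gr}(\Sigma)-{\rm cr}^-(\Sigma)={\rm cr}^+(\Sigma)$.

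The one genuine omission is the last clause of (c): you assert that ``$\varphi_0$ can be zero'' will follow from a to-be-constructed example, call it the most delicate step of the argument, and then never produce it. It is in fact immediate: for the circuit $C_1PC_2$ consisting of two negative loops joined by a positive link $e$, deletion-contraction gives $\varphi=\varphi_{0,2}-\varphi_{0,1}^2=(tx-2t+1)-(t-1)^2=tx-t^2$, using the values of Proposition~\ref{prop:Recurrences} and the product formula \eqref{eq:product-formula-of-flow-poly}; hence $\varphi_0=0$, $\varphi_1=x$, $\varphi_2=-1$, exactly the example the paper records. Until you include some such example, that part of the statement remains unproved, so you should add it; no delicate cancellation analysis is needed.
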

\begin{proof}
According to the expansion formula \eqref{eq:expansion}, the power of $t$ is bounded by the maximum $u_c(S)$, the number of unbalanced components without outer-edges of spanning subgraphs $(V,S)$ of $\Sigma$. Each unbalanced component without outer-edges contains at least one negative circle, which needs deleting one of its edges to be destroyed. It follows that the maximum $d$ with $\varphi_d\neq 0$ is bounded by the minimal number $|S|$ of edge subset such that $\Sigma\smallsetminus S$ contains no negative circles.

Next we verify (a)-(d) for signed graphs $\Sigma_{m,n,p}$, which consists of exactly one vertex with $m$ outer-edges, $n$ negative loops, and $p$ positive loops. The flow polynomial of $\Sigma_{m,n,p}$ has the form
\[
\bigg[(x-1)^{n}\cdot\frac{(x-1)^m-(-1)^m}{x}+(-1)^{m+n} +(-1)^mt\cdot\frac{(x-1)^n-(-1)^n}{x}\bigg](x-1)^p.
\]
Then $\varphi=\varphi_0+t\varphi_1$ with
\begin{align}
\varphi_0(\Sigma_{m,n,p},x)&=
(x-1)^{n+p}\cdot\frac{(x-1)^m-(-1)^m}{x}+(-1)^{m+n}(x-1)^p,
\label{eq:Smnp-0}\\
\varphi_1(\Sigma_{m,n,p},x)&= (-1)^m\cdot
\frac{(x-1)^n-(-1)^n}{x}\cdot(x-1)^p. \label{eq:Smnp-1}
\end{align}
It is clear that $\varphi_1\neq 0$ if and only if $n\geq 1$. We obtain the highest mixed degree as
\[
\deg_{\rm mix}\varphi(\Sigma_{m,n,p};t,x)
=n+p={\rm cr}_c(\Sigma)+1, \quad n\geq 1.
\]
The leading coefficient of the highest mixed degree has the sign $(-1)^m$.

Note that $\varphi(\Sigma_{m,n,p};t,x)=0$ if and only if $(m,n)=(1,0)$. It is easy to see that $\varphi_0\neq 0$ if and only $(m,n)\neq(1,0)$. More specifically, for $m=0$, we have $\varphi_0(\Sigma_{0,n,p},x)=(-1)^n(x-1)^p$. For $n=0$ in particular, $\varphi_0(\Sigma_{0,0,p},x)=(x-1)^p$ has leading coefficient $1$ and \begin{align}
\deg\varphi_0(\Sigma_{0,0,p},x)&=p={\rm cr}(\Sigma).
\label{eq:induction-base-00p}
\end{align}
For $m=1,n\geq 1$ or $m\geq 2$, the polynomial $\varphi_0$ has leading coefficient $1$ and its degree is read out from \eqref{eq:Smnp-0} as
\begin{equation}
\deg\varphi_0(\Sigma_{m,n,p},x)=m+n+p-1={\rm cr}(\Sigma).\label{eq:induction-base-mnp}
\end{equation}

The flow polynomial is invariant under switching. To prove (a)-(d), let $\nu$ be a switching such that the number of negative edges of $\Sigma^\nu$ is minimal. For simplicity, we may assume that the number of negative edges of $\Sigma$ is already minimal. Choose a spanning tree $T$ of $\Sigma[V]$ such that all edges of $T$ are positive, which is always possible. For each edge $e$ of $T$, let $B_{gr}(T,e)$ denote the fundamental graph bond of $\Sigma[V]$ with respect to $T$. The edge $e$ is a unique edge of $T$ contained in the graph bond $B_{gr}(T,e)$. Applying deletion-contraction, Theorem \ref{prop:deletion-contraction formula} implies
\begin{equation}\label{eq:deletion-contraction-deg}
\varphi(\Sigma;t,x)=\varphi(\Sigma/e;t,x)
-\varphi(\Sigma\setminus e;t,x).
\end{equation}
Collecting the terms $\pm t^ix^j$ for all $j$ in the expansion of $\varphi$, we obtain
\begin{equation}\label{eq:deletion-contraction0}
\varphi_i(\Sigma,x)=\varphi_i(\Sigma/e,x)
-\varphi_i(\Sigma\setminus e,x), \quad i=0,1,\ldots,d.
\end{equation}
Since $\varphi(\Sigma;t,x)\neq 0$, the signed graph $\Sigma$ contains no co-loops. There are two situations on the signed graph $\Sigma\smallsetminus e$.

(1) $\Sigma\smallsetminus e$ is connected. Then $B_{gr}(T,e)$ contains at least two edges. Thus
\begin{equation}\label{eq:cycle-rank inequality contraction}
{\rm cr}(\Sigma/e)= {\rm cr}(\Sigma), \quad
{\rm cr}(\Sigma\smallsetminus e)={\rm cr}(\Sigma)-1;
\end{equation}
\begin{equation}\label{eq:gr-cycle-rank inequality contraction}
{\rm cr}_{cgr}(\Sigma/e)={\rm cr}_{cgr}(\Sigma),\quad
{\rm cr}_{cgr}(\Sigma\smallsetminus e)={\rm cr}_{cgr}(\Sigma)-1.
\end{equation}

(2) $\Sigma\smallsetminus e$ is disconnected, that is, $e$ is a bridge of $\Sigma[V]$. Then the two components of $\Sigma\smallsetminus e$, denoted $\Sigma_1$ and $\Sigma_2$, must be unbalanced, due to the fact that no co-loops in $\Sigma$. It follows that ${\rm cr}(\Sigma_i)=|E(\Sigma_i)|-|V(\Sigma_i)|$, $i=1,2$. Thus
\begin{equation}\label{eq:cycle-rank inequality deletion}
{\rm cr}(\Sigma_1)+{\rm cr}(\Sigma_2)
=|E(\Sigma)|-|V(\Sigma)|-1={\rm cr}(\Sigma)-1.
\end{equation}

Now we prove (a)-(d) by induction on the number of edges.

(a) and (b): $\Sigma$ contains no negative circles or $\Sigma$ contains outer-edges. Our aim is to show that $\deg\varphi_0={\rm cr}(\Sigma)$. In the former case $\Sigma$ is equivalent to its underlying unsigned graph with or without outer-edges; consequently, $\varphi(\Sigma;t,x)=\varphi_0(\Sigma,x)$. The two cases have the common case that $\Sigma$ contains outer-edges and no negative circles, which are ordinary unsigned graphs with outer-edges. In both case, the induction bases are valid as \eqref{eq:induction-base-00p} and \eqref{eq:induction-base-mnp}.

In the case that $\Sigma\smallsetminus e$ is connected, by induction and \eqref{eq:cycle-rank inequality contraction}, we obtain
\begin{align*}
\deg\varphi_0(\Sigma\smallsetminus e,x)
&={\rm cr}(\Sigma\smallsetminus e)<{\rm cr}(\Sigma)\\
&={\rm cr}(\Sigma/e) =\deg\varphi_0(\Sigma/e,x).
\end{align*}
In the case that $\Sigma\smallsetminus e$ is disconnected with two components $\Sigma_1$ and $\Sigma_2$, by the product formula \eqref{eq:product-formula-of-flow-poly} we have
\[
\varphi(\Sigma\smallsetminus e;t,x) =\varphi(\Sigma_1;t,x)\varphi(\Sigma_2;t,x).
\]
Consequently,
$\varphi_0(\Sigma\smallsetminus e,x) =\varphi_0(\Sigma_1,x)\varphi_0(\Sigma_2,x)$.
According to \eqref{eq:cycle-rank inequality deletion}, we obtain
\begin{align*}
\deg\varphi_0(\Sigma\smallsetminus e,x)&=
\deg\varphi_0(\Sigma_1,x)+\deg\varphi_0(\Sigma_1,x)\\
&={\rm cr}(\Sigma_1)+{\rm cr}(\Sigma_2)<{\rm cr}(\Sigma)\\
&={\rm cr}(\Sigma/e)=\deg\varphi_0(\Sigma/e,x).
\end{align*}
Thus $\deg\varphi_0(\Sigma,x)=\deg\varphi_0(\Sigma/e,x)={\rm cr}(\Sigma)$.

(c) $\Sigma$ contains negative circles and no outer-edges.
Note that $\varphi_0$ admits the expansion
\[
\varphi_0(\Sigma,x)=\sum_{S\subseteq E, u(\Sigma\smallsetminus S)=0} (-1)^{|S|}x^{{\rm cr}(\Sigma\smallsetminus S)}.
\]
For each edge subset $S$ such that $\Sigma\smallsetminus S$ is balanced, let $S_0$ be a minimal subset of $S$ such that $\Sigma\smallsetminus S_0$ is balanced. Then $\Sigma\smallsetminus S_0$ must be connected. Otherwise, one of edges of $S_0$, say $e_0$, is between two components of $\Sigma\smallsetminus S_0$. Then $S_1:=S_0\smallsetminus e_0$ is a proper subset of $S_0$ such that $\Sigma\smallsetminus S_1$ is balanced, contradictory to the minimality of $S_0$. Clearly, ${\rm cr}^-(\Sigma)\leq |S_0|$. Choose a spanning tree $T_0$ of $\Sigma\smallsetminus S_0$. We obtain
\begin{align*}
{\rm cr}(\Sigma\smallsetminus S)&\leq {\rm cr}(\Sigma\smallsetminus S_0)={\rm cr}_{gr}(\Sigma)-|S_0|\\
&\leq {\rm cr}_{gr}(\Sigma)-{\rm cr}^-(\Gamma)
={\rm cr}^+(\Gamma).
\end{align*}
It follows that $\deg \varphi_0\leq {\rm cr}^+(\Gamma)$. The degree of $\varphi_0$ can be strictly less than ${\rm cr}^+(\Gamma)$, and even $\varphi_0=0$. For instance, for $C_1PC_2$, $\varphi=tx-t^2$ with $\varphi_0=0$, $\varphi_1=x$, and $\varphi_2=-1$.

(d) $\Sigma$ contains negative circles and no outer-edges. Our aim is to show $\deg\varphi_1={\rm cr}(\Sigma)$. We claim that $\deg\varphi_1(\Sigma\smallsetminus e,x)
<\deg\varphi_1(\Sigma/e,x)$.

Let $\Sigma\smallsetminus e$ be connected. According to  \eqref{eq:gr-cycle-rank inequality contraction}, we obtain
\begin{align*}
\deg\varphi_1(\Sigma\smallsetminus e,x)&=
{\rm cr}_{gr}(\Sigma\smallsetminus e)<{\rm cr}_{gr}(\Sigma)\\
&={\rm cr}_{gr}(\Sigma/e)=\deg\varphi_1(\Sigma/e,x).
\end{align*}
Let $\Sigma\smallsetminus e$ be disconnected with two components $\Sigma_1$ and $\Sigma_2$. The product formula \eqref{eq:product-formula-of-flow-poly} implies
\[
\varphi_1(\Sigma\smallsetminus e,x) =\varphi_1(\Sigma_1,x)\varphi_0(\Sigma_2,x) +\varphi_0(\Sigma_1,x)\varphi_1(\Sigma_2,x).
\]
It follows that
\begin{align*}
\deg\varphi_1(\Sigma\smallsetminus e,x) & \leq \max\big\{
\deg\varphi_1(\Sigma_1,x) + \deg\varphi_0(\Sigma_2,x), \\
&\hspace{1.7cm} \deg\varphi_0(\Sigma_1,x) + \deg\varphi_1(\Sigma_2,x)\big\}\\
&\leq \max\big\{{\rm cr}_{gr}(\Sigma_1)+{\rm cr}^+(\Sigma_2), {\rm cr}^+(\Sigma_1) + {\rm cr}_{gr}(\Sigma_2)\big\}.
\end{align*}
Since $\Sigma$ contains no co-loops and the edge $e$ is a bridge of $\Sigma[V]$, both $\Sigma_1$ and $\Sigma_2$ must contain negative circles. Recall the graph cycle-rank formula and the positive cycle-rank formula
\[
{\rm cr}_{gr}(\Sigma)=|E(\Sigma)|-|V(\Sigma)|+1, \quad
 {\rm cr}^+(\Sigma)\leq |E(\Sigma)|-|V(\Sigma)|.
\]
for signed graph $\Sigma$ with negative circles and no outer-edges. We obtain
\begin{align*}
\deg\varphi_1(\Sigma\smallsetminus e,x) & \leq
|E(\Sigma_1)|-|V(\Sigma_1)|+|E(\Sigma_2)|-|V(\Sigma_2)|\\
&=|E(\Sigma)|-|V(\Sigma)|-1 \\
&<{\rm cr}_{gr}(\Sigma)={\rm cr}_{gr}(\Sigma/e) =\deg\varphi_1(\Sigma/e,x).
\end{align*}
Now according to the deletion-contraction formula \eqref{eq:deletion-contraction0}, by induction we obtain
\[
\deg\varphi_1(\Sigma,x)
= \deg\varphi_1(\Sigma/e,x)
={\rm cr}(\Sigma/e)
={\rm cr}(\Sigma).
\]
\end{proof}

\begin{thm}
Let $\Sigma$ contain no co-loops, that is, $\varphi(\Sigma;t,x)\neq 0$. Then the counting function $\varphi(\Sigma,n):=|{\rm F}_{\rm nz}(\Sigma,{\Bbb Z}_n)|$, $n\geq 2$, determines a unique quasi-polynomial $\varphi(\Sigma,n)$  of degree ${\rm cr}(\Sigma)$, $n\in{\Bbb Z}$; that is,
\[
\varphi(\Sigma,n)=\sum_{k=0}^{{\rm cr}(\Sigma)}a_k(n)n^k, \quad
n\in{\Bbb Z},
\]
where $a_k:{\Bbb Z}\rightarrow{\Bbb Z}$ are periodic functions of period $2$ if $\Sigma$ contains negative circles, and of period $1$ if $\Sigma$ contains no negative circles.
\end{thm}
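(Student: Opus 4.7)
The plan is to read off the quasi-polynomial structure directly from the bivariate flow polynomial supplied by Theorem~\ref{thm:bivariate-flow-poly}. Since $|{\rm Tor}_2({\Bbb Z}_n)|=1$ when $n$ is odd and $=2$ when $n$ is even, that theorem gives
\[
\varphi(\Sigma,n)=\varphi\big(\Sigma;|{\rm Tor}_2({\Bbb Z}_n)|,n\big)=\left\{\begin{array}{ll}
\varphi(\Sigma;1,n) & \mbox{if $n$ is odd,}\\
\varphi(\Sigma;2,n) & \mbox{if $n$ is even,}
\end{array}\right.
\]
for every integer $n\geq 2$. Both $\varphi(\Sigma;1,x)$ and $\varphi(\Sigma;2,x)$ are univariate integer polynomials, so the counting function extends uniquely from $n\geq 2$ to all $n\in{\Bbb Z}$ as a quasi-polynomial of period dividing~$2$. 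Writing $\varphi(\Sigma;j,x)=\sum_kc_k^{(j)}x^k$ and setting $a_k(n):=c_k^{(1)}$ for $n$ odd and $a_k(n):=c_k^{(2)}$ for $n$ even produces the required expansion $\varphi(\Sigma,n)=\sum_ka_k(n)n^k$ with integer-valued periodic $a_k$; uniqueness is standard univariate polynomial interpolation on each parity class.

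For the degree, observe that the no-co-loop hypothesis gives $b(\Sigma\smallsetminus e)=b(\Sigma)$ for every edge $e$, so removing the edges of $E\smallsetminus S$ from $\Sigma$ one at a time cannot increase $b$ at the first step and increases $b$ by at most~$1$ at every subsequent step. Hence for any proper $S\subsetneq E$ we have $b(S)-b(\Sigma)\leq|E\smallsetminus S|-1$, i.e., ${\rm cr}(S)\leq{\rm cr}(\Sigma)-1$, so the expansion \eqref{eq:expansion} contributes to the monomial $x^{{\rm cr}(\Sigma)}$ only through $S=E$, giving the coefficient $t^{u_c(\Sigma)}$. Specializing to $t=1$ and $t=2$ yields nonzero leading coefficients $1$ and $2^{u_c(\Sigma)}$, and the upper bound $\deg\leq{\rm cr}(\Sigma)$ is immediate from \eqref{eq:expansion}, so $\deg\varphi_{\rm odd}=\deg\varphi_{\rm even}={\rm cr}(\Sigma)$.

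For the period, the forward direction is clean: if $\Sigma$ contains no negative circles, then no spanning subgraph has an unbalanced compact component, forcing $u_c(S)=0$ for all $S$ and collapsing \eqref{eq:expansion} to $\varphi(\Sigma;t,x)=\varphi_0(\Sigma,x)$, independent of $t$, whence $\varphi_{\rm odd}=\varphi_{\rm even}$. For the converse, reduce by the product formula \eqref{eq:product-formula-of-flow-poly} to a connected component $\Sigma_i$ containing a negative circle. If $\Sigma_i$ has no outer-edges then $u_c(\Sigma_i)\geq 1$ and the leading $x$-coefficients computed above already separate $\varphi_{\rm odd}$ from $\varphi_{\rm even}$. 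If $\Sigma_i$ has outer-edges and $u_c(\Sigma_i)=0$, I instead compare at $x=1$: from \eqref{eq:expansion} directly $\varphi(\Sigma_i;1,1)=\sum_S(-1)^{|E\smallsetminus S|}=0$, while $\varphi(\Sigma_i;2,1)=\sum_S(-1)^{|E\smallsetminus S|}2^{u_c(S)}$ needs to be shown nonzero. The one-vertex base cases from Corollary~\ref{cor:varphi-mn} give $\varphi_{m,n}(2,1)=(-1)^{m+n+1}\neq 0$ for $m,n\geq 1$, providing the induction base for a deletion-contraction argument along a non-co-loop edge.

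The main obstacle is precisely this last case (connected $\Sigma$ with both outer-edges and negative circles): neither the leading coefficient nor any simple invariant distinguishes the odd and even specializations, so one must verify $\varphi(\Sigma;2,1)\neq 0$ directly. The intended route is induction on $|E(\Sigma)|$, removing an edge $e$ that is neither a co-loop nor severs all negative circles from the rest of the component, and relating $\varphi(\Sigma;2,1)$ to $\varphi(\Sigma\smallsetminus e;2,1)$ and $\varphi(\Sigma/e;2,1)$ via a deletion-contraction relation derived directly from \eqref{eq:expansion}. Ensuring that the reduction preserves both the no-co-loop hypothesis and the presence of a negative circle in the smaller signed graphs is the delicate technical point of the argument.
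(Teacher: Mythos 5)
Your first three paragraphs are correct, and in one place sharper than the paper's own argument: the observation that the no-co-loop hypothesis forces $b(S)-b(\Sigma)\leq |E\smallsetminus S|-1$, hence ${\rm cr}(S)\leq{\rm cr}(\Sigma)-1$, for every proper $S\subsetneq E$, so that the coefficient of $x^{{\rm cr}(\Sigma)}$ in $\varphi(\Sigma;t,x)$ is exactly the single monomial $t^{u_c(\Sigma)}$, gives a complete and self-contained proof that $\deg\varphi_{\rm odd}=\deg\varphi_{\rm even}={\rm cr}(\Sigma)$; the paper merely asserts the degree claim. The parity-wise definition of $a_k$, uniqueness by interpolation on each parity class, and the period-$1$ conclusion when $\Sigma$ has no negative circles (since then $u_c\equiv 0$ and $\varphi$ is independent of $t$) all coincide with the paper's proof.

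The gap is the one you flag yourself: when a component contains both negative circles and outer-edges, you never prove $\varphi(\Sigma;2,1)\neq 0$, so the assertion that the period is genuinely $2$ (i.e.\ $\varphi_{\rm odd}\neq\varphi_{\rm even}$) is not established in that case; if ``period $2$'' is read as \emph{minimal} period --- which is how the paper's own proof reads it --- your argument is incomplete. Two additional wrinkles in your sketch: (i) $\varphi(\Sigma;t,1)=0$ whenever $\Sigma$ has a positive loop (each contributes a factor $x-1$), so positive loops must be factored out before any $x=1$ comparison; (ii) a naive induction through $\varphi(\Sigma;2,1)=\varphi(\Sigma/e;2,1)-\varphi(\Sigma\smallsetminus e;2,1)$ cannot close on its own, since a difference of nonzero integers may vanish --- you would need a sign-definite strengthening, something like $(-1)^{|E|-|V|}\varphi(\Sigma;2,1)\geq 1$, preserved by the reduction. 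You should know, however, that the paper's proof breaks at exactly the same point: it claims $a'_d=1$ and $a''_d=2$ for every connected $\Sigma$ with negative circles, but when $\Sigma$ also has outer-edges one has $u_c(\Sigma)=0$ and both leading coefficients equal $1$ (already for $\varphi_{1,1}=x-t$, where $\varphi_{\rm odd}=x-1$ and $\varphi_{\rm even}=x-2$ are both monic), so the leading coefficient does not separate the parities there. Under the weaker reading that $2$ is merely \emph{a} period, your first three paragraphs already constitute a complete proof and your fourth paragraph is not needed at all.
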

\begin{proof}
Let $\varphi_{\rm odd}(\Sigma,n)=\sum_{k=0}^{{\rm cr}(\Sigma)}a'_kn^k$ for odd $n\geq 3$, and $\varphi_{\rm even}(\Sigma,n)=\sum_{k=0}^{{\rm cr}(\Sigma)}a''_kn^k$ for even $n\geq 2$. For integers $k\geq 0$, define functions $a_k:{\Bbb Z}\rightarrow{\Bbb Z}$ by
\[
a_k(n)=\left\{\begin{array}{ll}
a'_k & \mbox{if $n$ is odd},\\
a''_k & \mbox{if $n$ is even}.
\end{array}\right.
\]
Clearly, each $a_k(n)$ is a periodic function of period $2$; its minimal period might be $1$. This means that $\varphi(\Sigma,n)$ is a quasi-polynomial in the sense of Stanley \cite[p.\,201]{Stanley1}

If $\Sigma$ contains no negative circles, it is trivial that $\varphi(\Sigma,n)$ is a polynomial function in terms of integer variable $n\geq 2$, for the (compact-unbalance rank) function $u_c$ is identically zero. It follows that $\varphi(\Sigma,n)$ for $n\in{\Bbb Z}$ is a polynomial function in terms of integer variable $n$, and $a_k(n)$ are periodic functions of period $1$.

Let $\Sigma$ contain some negative circles. Then $\varphi(\Sigma,n)$ is nonzero. Note that both polynomials $\varphi_{\rm odd}$ and $\varphi_{\rm even}$ have degree $d$ equal to ${\rm cr}(\Sigma)$, the cycle-rank of $\Sigma$. For the sake of simplicity we assume that $\Sigma$ is connected. Then $a'_d=1$ and $a''_d=2$. Thus $a_d$ is a periodic function of minimal period $2$.
\end{proof}

\noindent
{\bf Example 1.}
The polynomial flow $\varphi_{m,n}(t,x)$ of the signed graph with $m$ outer-edges and $n$ negative loops at a single vertex are given as follows:
\begin{enumerate}[(1)]
\item
$\varphi_{0,0}=1$, $\varphi_{1,0}=0$, $\varphi_{0,1}=t-1$, $\varphi_{1,1}=x-t$.

\item
$\varphi_{2,0}=x-1$, $\varphi_{1,1}=x-t$, $\varphi_{0,2}=tx-2t+1$.

\item
$\varphi_{3,0}=x^2-3x+2$, $\varphi_{2,1}=x^2-3x+t+1$, $\varphi_{1,2}=x^2-2x-tx+2t$,
$\varphi_{0,3}=tx^2-3tx+3t-1$.

\item
$\varphi_{4,0}=x^3-4x^2+6x-3$, $\varphi_{3,1}=x^3-4x^2+6x-t-2$,
$\varphi_{2,2}=x^3-4x^2+tx+5x-2t-1$,
$\varphi_{1,3}=x^3-tx^2-3x^2+3tx+3x-3t$, $\varphi_{0,4}=t(x-2)(x^2-2x+2)+1$.
\end{enumerate}

\noindent
{\bf Example 2.} Applying the deletion-contraction formula of Proposition~\eqref{prop:deletion-contraction formula} to the following three signed graphs,
\begin{figure}[h]
\centering \includegraphics[width=40mm]{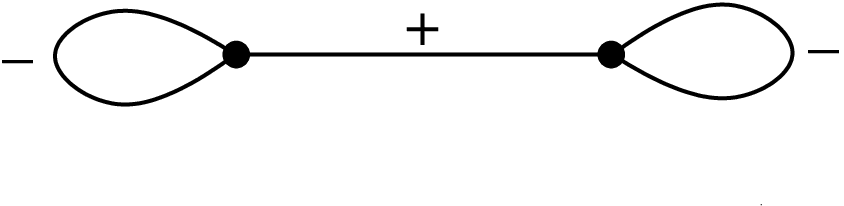}\hspace{5mm}
\includegraphics[width=40mm]{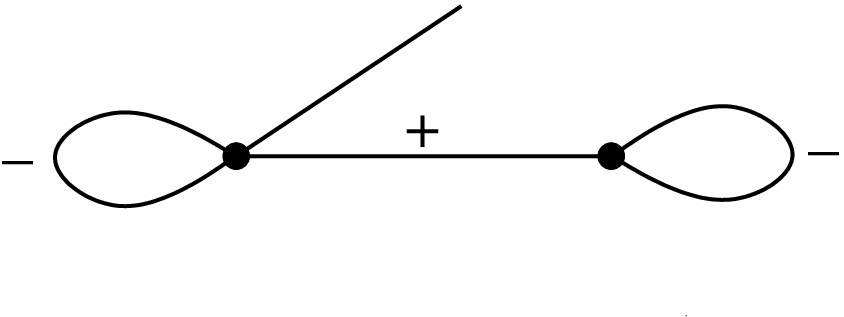}\hspace{5mm}
\includegraphics[width=40mm]{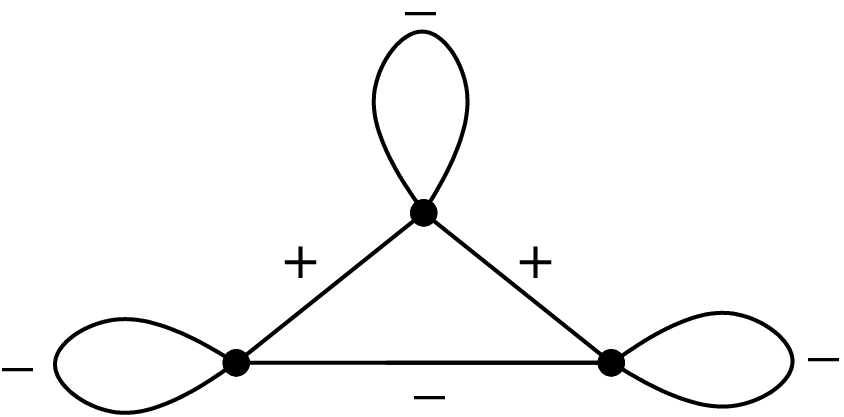} \caption{Three signed graphs.}
\label{Fig:}
\end{figure}
the left one has the flow polynomial $\varphi=t(x-t)=tx-t^2$, the middle one has the flow polynomial
\[
\varphi=(x-t-1)(x-t)=x^2-2tx+t^2-x+t,
\]
and the right one has the flow polynomial
\begin{align*}
\varphi&=\varphi_{0,4}-2\varphi_{0,3}+3\varphi_{0,1}\varphi_{0,2} -\varphi_{0,1}^3\\
&=tx^3-6tx^2+3t^2x-t^3+9tx-3t^2-4t+1.
\end{align*}

\end{document}